\newcommand{\kom}[1]{}
\renewcommand{\kom}[1]{{\bf [#1]}}
 \def\1{\raisebox{2pt}{\rm{$\chi$}}}
\newtheorem{theorem}{Theorem}[section]
\newtheorem{corollary}[theorem]{Corollary}
\newtheorem{lemma}[theorem]{Lemma}
\newtheorem{proposition}[theorem]{Proposition}
\newtheorem{definition}[theorem]{Definition}
\newtheorem{remark}[theorem]{Remark}
 \newcommand{\eps}{{\varepsilon}}
 \def\1{\raisebox{2pt}{\rm{$\chi$}}}
\newcommand{\abs}[1]{\left|#1\right|}
\def\vint_#1{\mathchoice%
          {\mathop{\kern 0.2em\vrule width 0.6em height 0.69678ex depth -0.58065ex
                  \kern -0.8em \intop}\nolimits_{\kern -0.4em#1}}%
          {\mathop{\kern 0.1em\vrule width 0.5em height 0.69678ex depth -0.60387ex
                  \kern -0.6em \intop}\nolimits_{#1}}%
          {\mathop{\kern 0.1em\vrule width 0.5em height 0.69678ex
              depth -0.60387ex
                  \kern -0.6em \intop}\nolimits_{#1}}%
          {\mathop{\kern 0.1em\vrule width 0.5em height 0.69678ex depth -0.60387ex
                  \kern -0.6em \intop}\nolimits_{#1}}}
\def\vintslides_#1{\mathchoice%
          {\mathop{\kern 0.1em\vrule width 0.5em height 0.697ex depth -0.581ex
                  \kern -0.6em \intop}\nolimits_{\kern -0.4em#1}}%
          {\mathop{\kern 0.1em\vrule width 0.3em height 0.697ex depth -0.604ex
                  \kern -0.4em \intop}\nolimits_{#1}}%
          {\mathop{\kern 0.1em\vrule width 0.3em height 0.697ex depth -0.604ex
                  \kern -0.4em \intop}\nolimits_{#1}}%
          {\mathop{\kern 0.1em\vrule width 0.3em height 0.697ex depth -0.604ex
                  \kern -0.4em \intop}\nolimits_{#1}}}
\newcommand{\aveint}[2]{\mathchoice%
          {\mathop{\kern 0.2em\vrule width 0.6em height 0.69678ex depth -0.58065ex
                  \kern -0.8em \intop}\nolimits_{\kern -0.45em#1}^{#2}}%
          {\mathop{\kern 0.1em\vrule width 0.5em height 0.69678ex depth -0.60387ex
                  \kern -0.6em \intop}\nolimits_{#1}^{#2}}%
          {\mathop{\kern 0.1em\vrule width 0.5em height 0.69678ex depth -0.60387ex
                  \kern -0.6em \intop}\nolimits_{#1}^{#2}}%
          {\mathop{\kern 0.1em\vrule width 0.5em height 0.69678ex depth -0.60387ex
                  \kern -0.6em \intop}\nolimits_{#1}^{#2}}}
\newcommand{\ud}{\, d}
\newcommand{\ol}{\overline}
\newcommand{\esssup}{\operatornamewithlimits{ess\, sup}}
\newcommand{\tr}{\operatorname{trace}}
\begin{document}

\title[A continuous time tug-of-war]{A continuous time tug-of-war game for parabolic $p(x,t)$-Laplace type equations}

\author[Joonas Heino]{Joonas Heino}
\address{Department of Mathematics and Statistics, University of Jyv\"{a}skyl\"{a}, PO Box 35, FI-40014 Jyv\"{a}skyl\"{a}, Finland}
\email{joonas.heino@jyu.fi}

\date{\today}
\keywords{normalized $p(x,t)$-Laplacian, parabolic partial differential equation, stochastic differential game, viscosity solution.} \subjclass[2010]{91A15,49L25,35K65}

\begin{abstract}
We formulate a stochastic differential game in continuous time that represents the unique viscosity solution to a terminal value problem for a parabolic partial differential equation involving the normalized $p(x,t)$-Laplace operator. Our game is formulated in a way that covers the full range $1<p(x,t)<\infty$. Furthermore, we prove the uniqueness of viscosity solutions to our equation in the whole space under suitable assumptions.
\end{abstract}

\maketitle

\section{Introduction}
In this paper, we study a two-player zero-sum stochastic differential game (SDG) that is defined in terms of an $n$-dimensional state process, and is driven by a $2n$-dimensional Brownian motion for $n\geq 2$. The players' impacts on the game enter in both a diffusion and a drift coefficient of the state process. The game is played in $\mathbb R^n$ until a fixed time $T>0$, and at that time a player pays the other player the amount given by a pay-off function $g$ at a current point. We show that the game has a value, and characterize the value function of the game as a viscosity solution $u$ to a parabolic terminal value problem 
\begin{align*}
\begin{cases}
\partial_tu(x,t)+\triangle_{p(x,t)}^Nu(x,t)+\sum_{i=1}^n\mu_i\frac{\partial u}{\partial x_i }(x,t)=ru(x,t)~~&\text{in }\mathbb R^n\times (0,T), \\
u(x,T)=g(x)~~&\text{on }\mathbb R^n
\end{cases}
\end{align*}
for $\mu\in \mathbb R^n$ and $r\geq 0$. Moreover, we show that the viscosity solution $u$ is unique under suitable assumptions. Here, the normalized $p(x,t)$-Laplacian is defined as 
\begin{align*}
&\triangle_{p(x,t)}^Nu(x,t)\\
&:=\bigg(\frac{p(x,t)-2}{|Du(x,t)|^2}\bigg)\sum_{i,j=1}^n\frac{\partial^2 u}{\partial x_i \partial x_j}(x,t)\frac{\partial u}{\partial x_i }(x,t)\frac{\partial u}{\partial x_j }(x,t)+\sum_{i=1}^n\frac{\partial^2 u}{\partial x_i^2}(x,t)
\end{align*}
for $x\in \mathbb R^n$ and $t\in (0,T)$, provided that $Du(x,t)\not=0$. The vector $Du=(\partial u/\partial x_1,\dots,\partial u/\partial x_n)^T$ is the gradient with respect to $x$, and the function $p:\mathbb R^n \times [0,T] \to \mathbb R$ is Lipschitz continuous with values on a compact set $[p_{\text{min}},p_{\text{max}}]$ for constants $1<p_{\text{min}}\leq p_{\text{max}}<\infty$.

This work is motivated by a connection between $p$-harmonic functions and a stochastic game called tug-of-war, see the seminal papers \cite{peresssw09,peress08, manfredipr12} in the elliptic case and \cite{manfredipr10c} in the parabolic case. Furthermore, Atar and Budhiraja \cite{atarb10} formulated a game in continuous time representing the unique viscosity solution to a certain elliptic inhomogeneous problem with the normalized $\infty$-Laplacian. The contribution of our work is the identification of a game in continuous time that corresponds to the parabolic normalized $p(x,t)$-Laplace operator. Moreover, our game covers the full range $1<p(x,t)<\infty$. In the game formulation, we increased the dimension of the Brownian motion that drives our state process to let $p$ also get values below two. This approach is new even for constant $p$.

In this work, main difficulties arise from the variable dependence in $p$ and from the unboundedness of the game domain. It is simpler to approximate viscosity solutions and to prove comparison principles to our equations without the variable dependence in $p$. Furthermore, we overcome the loss of translation invariance on the SDG by utilizing the H\"{o}lder continuity of solutions to Bellman-Isaacs type equations. Because the game domain is unbounded, we need to eliminate solutions growing too fast when $|x|\to \infty$. We show that under a linear growth bound a viscosity solution to our equation is unique.

\subsection{SDG formulation}
We fix a time $T>0$, and model $X(t),t\in[0,T]$ by a stochastic differential equation
\begin{align}
\begin{split}\label{intro stoc dynamics}
\begin{cases}
dX(s)&=\rho\big(G(s)\big)\ud s+\sigma\big(X(s),G(s)\big)\ud \ol{W}(s)\\
X(0)&=x,
\end{cases}
\end{split}
\end{align}
where $x\in \mathbb R^n$, and $\ol{W}$ is a $2n$-dimensional Brownian motion on a probability space $(\Omega,\mathcal F,\{\mathcal F_s\},\mathbb P)$ satisfying the standard assumptions. In our model, there are two competing players. We let  
$$
G(s)=\big(a(s),b(s),c(s),d(s)\big),
$$
where
$$
a(s),b(s) \in \mathbb S^{n-1},\, c(s),d(s)\in [0,\infty),\, s\in[0,T]
$$
are progressively measurable stochastic processes with respect to the filtration $\{\mathcal F_s\}$. Throughout the paper, $\mathbb S^{n-1}$ denotes the unit sphere of $\mathbb R^n$. The pairs $\big(a(s),c(s)\big)$ and $\big(b(s),d(s)\big)$ are called controls of the players. Roughly speaking, $a(s)$ and $b(s)$ are the directions, and $c(s)$ and $d(s)$ are the lengths taken by the players at the time $s$. Furthermore, let $\mu\in \mathbb R^n$. Then, for $s\in [0,T]$, we define the function $\rho$ in \eqref{intro stoc dynamics} by
$$
\rho\big(G(s)\big)=\mu + \big(c(s)+d(s)\big)\big(a(s)+b(s)\big).
$$
Recall that $p:\mathbb R^n \times [0,T] \to \mathbb R$ is a Lipschitz continuous function taking values on the compact set $[p_{\text{min}},p_{\text{max}}]$. We define the $n\times 2n$ matrix $\sigma$ in \eqref{intro stoc dynamics} to be
\begin{align*}
&\sigma\big(X(s),G(s)\big) \\
&= \Big[a(s)\sqrt{p\big(X(s),s\big)-1};~\hspace{4pt} P_{a(s)}^\bot;~\hspace{4pt} b(s)\sqrt{p\big(X(s),s\big)-1};~\hspace{4pt} P_{b(s)}^\bot\Big],
\end{align*}
where the $n\times (n-1)$ matrices $P_{a(s)}^\bot$ and $P_{b(s)}^\bot$ are defined such that the matrices
$$
P_{a(s)}^\bot \big(P_{a(s)}^\bot\big)^T \text{ and } P_{b(s)}^\bot \big(P_{b(s)}^\bot\big)^T
$$
are projections to the $(n-1)$-dimensional hyperspaces orthogonal to the vectors $a(s)$ and $b(s)$ at the time $s$, respectively. For more details on $\sigma$, see Section \ref{sec: pre} below.

We only allow players to use admissible controls. Roughly speaking, a player initially declares a bound $C<\infty$, and then plays as to keep $c(s)\leq C$ for all $s$, where $\big(a(s),c(s)\big)$ is the admissible control of the player. 
\begin{definition}\label{def:admissible controls}
Given a control $A:=\big(a(s),c(s)\big)$, that is, a progressively measurable process with respect to the Brownian filtration $\{\mathcal F_s\}$ with $a(s)\in \mathbb S^{n-1}$, 
$c(s)\in[0,\infty)$, and $s\in [0,T]$, we set
\begin{align}\label{admissible controls}
\Lambda(A)=\esssup_{\omega\in \Omega}\sup_{s\in[0,T]} c(s)\in[0,\infty].
\end{align}
Then, we define the set of admissible controls by 
\begin{align*}
\mathcal{AC}=\{A~\text{control}:\Lambda(A)<\infty\}.
\end{align*}
\end{definition}
Given an admissible control $A$, we say that the compact set $\mathbb S^{n-1}\times [0,\Lambda(A)]$ is an action set. A strategy is a response to the control of the opponent. 
\begin{definition}\label{def:admissible strategies}
A strategy is a function
$$
S: \mathcal{AC} \to \mathcal{AC}
$$
such that for all $t\in[0,T]$, if
$$
\mathbb P\big(A(s)=\tilde{A}(s)~\text{for a.e.}~s\in[0,t]\big)=1~\text{and }\Lambda(A)=\Lambda(\tilde{A}),
$$
then
$$
\mathbb P\big(S(A)(s)=S(\tilde{A})(s)~\text{for a.e.}~s\in[0,t]\big)=1~\text{and }\Lambda(S(A))=\Lambda(S(\tilde{A})).
$$
Given a strategy $S$, we set
\begin{align}\label{admissible strategy}
\Lambda(S):=\sup_{A\in\mathcal{AC}}\Lambda(S(A))\in[0,\infty].
\end{align}
Then, we define the set of admissible strategies by
\begin{align*}
\mathcal S=\{S~\text{strategy}: \Lambda(S)<\infty\}.
\end{align*}
\end{definition}
We define the lower and upper values of the game with the dynamics \eqref{intro stoc dynamics} by
\begin{align}
\begin{split}\label{intro lower and upper values}
U^-(x,t)&= \inf_{S \in \mathcal S} \sup_{A\in \mathcal{AC}} \mathbb E\Big[e^{-r(T-t)}g\big(X(T)\big)\Big],\\
U^+(x,t)&=\sup_{S \in \mathcal S} \inf_{A\in \mathcal{AC}} \mathbb E\Big[e^{-r(T-t)}g\big(X(T)\big)\Big]
\end{split}
\end{align}
for all $(x,t)\in \mathbb R^n\times [0,T]$, where $r\geq 0$, and $g$ is the pay-off function. The game starts at a position $x$ at a time $t$, and the expectation $\mathbb E$ is taken with respect to the measure $\mathbb P$. The game is said to have a value at $(x,t)$, if it holds $U^-(x,t)=U^+(x,t)$. 

\subsection{Statement of the main results}
Let us denote
\begin{align*}
&F\big((x,t), u(x,t),Du(x,t),D^2u(x,t)\big)\\
&:=\triangle_{p(x,t)}^Nu(x,t)+\sum_{i=1}^n\mu_i\frac{\partial u}{\partial x_i }(x,t) -ru(x,t)
\end{align*}
for all $(x,t)\in \mathbb R^n \times (0,T)$, where $D^2u$ is the matrix consisting of the second order derivatives with respect to $x$. We consider the terminal value problem 
\begin{align}
\begin{split}\label{limit terminal value problem}
\begin{cases}
\partial_tu+F\big((x,t),u,Du,D^2u\big)=0~~&\text{in }\mathbb R^n\times (0,T), \\
u(x,T)=g(x)~~&\text{on }\mathbb R^n,
\end{cases}
\end{split}
\end{align}
where $g$ is a positive, bounded and Lipschitz continuous function. A common notion of a weak solution to this equation is a viscosity solution. In this paper, we prove the following main result.
\begin{theorem}\label{main theorem of the paper}
Let $g$ be positive, bounded and Lipschitz continuous. Furthermore, let $U^-$ and $U^+$ be the lower and upper values of the stochastic differential game defined in \eqref{intro lower and upper values}, respectively. Then, the functions $ U^-$ and $U^+$ are viscosity solutions to \eqref{limit terminal value problem}.
\end{theorem}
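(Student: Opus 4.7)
The plan is to follow the standard stochastic control route: prove a dynamic programming principle (DPP) for $U^\pm$, combine it with It\^o's formula applied along the dynamics \eqref{intro stoc dynamics} to deduce the viscosity inequalities, and then identify the resulting Isaacs operator with $F$ on both sides. Boundedness of $g$ gives $|U^\pm|\leq\|g\|_\infty$, and joint continuity of $U^\pm$ in $(x,t)$ will be extracted from H\"{o}lder estimates for Bellman--Isaacs type equations, as the introduction alludes to. The central input is the DPP
\[
U^+(x,t)=\sup_{S\in\mathcal S}\inf_{A\in\mathcal{AC}}\mathbb E\Big[e^{-r(\tau-t)}\,U^+\big(X(\tau),\tau\big)\Big],
\]
for every $(x,t)\in\mathbb R^n\times[0,T)$ and every $\{\mathcal F_s\}$-stopping time $\tau\in[t,T]$; the analogous identity with $\inf_S$ and $\sup_A$ swapped holds for $U^-$. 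I would establish this by an $\varepsilon$-optimal concatenation argument: paste an $\varepsilon$-optimal strategy on $[t,\tau]$ with a measurably selected $\varepsilon$-optimal strategy from $(X(\tau),\tau)$ onward; the finiteness of $\Lambda(\cdot)$ built into Definitions \ref{def:admissible controls}--\ref{def:admissible strategies} is precisely what keeps the pasted object admissible.

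To show that $U^+$ is a viscosity subsolution of \eqref{limit terminal value problem}, fix $(x_0,t_0)\in\mathbb R^n\times(0,T)$ and a test function $\varphi\in C^{2,1}$ such that $U^+-\varphi$ attains a local maximum of value $0$ at $(x_0,t_0)$. Choose $\delta>0$ with $U^+\leq\varphi$ on a parabolic neighbourhood of $(x_0,t_0)$, and, for small $h>0$, use the stopping time
\[
\tau_h=(t_0+h)\wedge\inf\{s\geq t_0:|X(s)-x_0|\geq\delta\}.
\]
Apply It\^{o}'s formula to $e^{-r(s-t_0)}\varphi(X(s),s)$ on $[t_0,\tau_h]$, take expectations (the stochastic integral is a genuine martingale on this bounded interval), and invoke the DPP together with $U^+\leq\varphi$ locally to obtain
\[
0 \leq \sup_S\inf_A\mathbb E\bigg[\int_{t_0}^{\tau_h}e^{-r(s-t_0)}\Big(\partial_s\varphi+\mathcal L^{a,c,b,d}\varphi\Big)(X(s),s)\,ds\bigg],
\]
where $\mathcal L^{a,c,b,d}\varphi:=\tfrac{1}{2}\mathrm{tr}(\sigma\sigma^TD^2\varphi)+\rho\cdot D\varphi-r\varphi$. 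Dividing by $h$, sending $h\downarrow 0$, and using continuity of the integrand on bounded action sets, one concludes
\[
\partial_t\varphi(x_0,t_0)+\sup_{(a,c)}\inf_{(b,d)}\mathcal L^{a,c,b,d}\varphi(x_0,t_0)\geq 0.
\]
The supersolution inequality for $U^+$ and the analogous bounds for $U^-$ follow from symmetric arguments with a local minimum test function and with the order of $\sup$ and $\inf$ swapped.

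A direct computation using $P_a^\bot(P_a^\bot)^T=I-aa^T$ gives $\sigma\sigma^T=(p-2)(aa^T+bb^T)+2I$, hence
\[
\mathcal L^{a,c,b,d}\varphi = \tfrac{p-2}{2}\big(\langle D^2\varphi\,a,a\rangle+\langle D^2\varphi\,b,b\rangle\big)+\Delta\varphi+\mu\cdot D\varphi+(c+d)(a+b)\cdot D\varphi-r\varphi.
\]
Assume first that $D\varphi(x_0,t_0)\neq 0$. The bilinear drift term $(c+d)(a+b)\cdot D\varphi$, together with the fact that $c$ and $d$ range over $[0,\infty)$, forces the saddle point $a^\ast=D\varphi/|D\varphi|$, $b^\ast=-D\varphi/|D\varphi|$: any other $a$ lets the $\inf$-player pick $b$ with $(a+b)\cdot D\varphi<0$ and then $d\to\infty$, sending $\mathcal L$ to $-\infty$; symmetrically, for inf-sup, the $\sup$-player pushes $c\to\infty$ to penalise every $b\neq b^\ast$. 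At this saddle point $(a^\ast+b^\ast)\cdot D\varphi=0$ and $\langle D^2\varphi\,a^\ast,a^\ast\rangle=\langle D^2\varphi\,b^\ast,b^\ast\rangle=\langle D^2\varphi\,D\varphi,D\varphi\rangle/|D\varphi|^2$, so both $\sup\inf$ and $\inf\sup$ evaluate to
\[
(p-2)\frac{\langle D^2\varphi\,D\varphi,D\varphi\rangle}{|D\varphi|^2}+\Delta\varphi+\mu\cdot D\varphi-r\varphi = F\big((x_0,t_0),\varphi,D\varphi,D^2\varphi\big),
\]
which is exactly the required viscosity inequality. The degenerate case $D\varphi(x_0,t_0)=0$ is disposed of by the standard restriction of admissible test functions in the viscosity definition for singular operators.

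The main obstacle is the rigorous DPP on this unbounded domain with unbounded action sets. One must perform a measurable selection of $\varepsilon$-optimal strategies at the intermediate time $\tau$, paste them while preserving $\Lambda(\cdot)<\infty$, and control tails of $X$ so that all expectations are well defined despite $\rho$ being unbounded. A secondary subtlety is justifying the limit $h\downarrow 0$ in the It\^o step: one exploits the fact that the local geometry of $\varphi$ effectively truncates the relevant $(c,d)$ to a compact set (since large $c$ or $d$ are penalised by the sign of $(a+b)\cdot D\varphi$), while the variable exponent $p(x,t)$ enters only through its Lipschitz evaluation in a small ball around $(x_0,t_0)$, and therefore poses no additional difficulty once $\sigma$ is localised.
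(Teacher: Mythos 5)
Your algebraic identification of the generator is correct: $\sigma\sigma^T=(p-2)(a\otimes a+b\otimes b)+2I$ matches the matrix $\mathcal A_{a,b}^{(x,t)}$ in \eqref{alpha matrix} (up to the factor $\tfrac12$), and the saddle $a^\ast=-b^\ast=D\varphi/|D\varphi|$ does recover $F$. But the proof rests entirely on a dynamic programming principle for the game \eqref{intro lower and upper values} with \emph{unbounded} action sets and with strategies as in Definition \ref{def:admissible strategies}, and that DPP is precisely the step you do not prove. For continuous-time SDGs with the Elliott--Kalton-type strategy notion this is a well-known and serious difficulty: strategies here are maps $\mathcal{AC}\to\mathcal{AC}$ subject only to a non-anticipativity condition, and they do not obviously concatenate at a stopping time $\tau$ (the pasted object must remain a single measurable, non-anticipating map on all of $\mathcal{AC}$, and the measurable selection of $\varepsilon$-optimal strategies indexed by the random point $(X(\tau),\tau)$ is not available off the shelf). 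This is exactly why the literature you would need to cite (Fleming--Souganidis, \cite{swiech96}) either discretizes the game or avoids the DPP altogether, and why the paper takes the latter route: it first restricts to bounded action sets $\mathcal H_m$, identifies $U_m^\pm$ with the unique viscosity solutions of the Bellman--Isaacs problems \eqref{eq: bdd 2}--\eqref{eq: bdd 1} by a verification argument (sup-/inf-convolution, mollification, It\^o's formula applied to the regularized \emph{solution} with discretized near-optimal controls, Lemma \ref{main lemma add reg}), and only then passes to the limit $m\to\infty$ using the equicontinuity of $(u_m)$ (Lemma \ref{lemma: equicontinuity}), the convergence $F_m^-\to -F$ (Lemma \ref{lemma: limit eq}), stability of viscosity solutions, and the elementary convergence $U_m^\pm\to U^\pm$.

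A second, related gap: even granting the DPP, your It\^o step requires interchanging $\sup_S\inf_A$ with the limit $h\downarrow 0$ while $(c,d)$ range over all of $[0,\infty)$. Your remark that the geometry of $\varphi$ ``effectively truncates'' $(c,d)$ is the assertion that the Isaacs operator with unbounded controls is finite and equals its value at the saddle; this is nontrivial (for a fixed $m$ the operator is finite, but the passage to unbounded controls is exactly the content of Lemma \ref{lemma: limit eq}, which needs the key inequality \eqref{key ineq} and a compactness argument forcing $a_m\to\nu/|\nu|$, $b_m\to-\nu/|\nu|$). Finally, your claim that continuity of $U^\pm$ follows from H\"older estimates for Bellman--Isaacs equations is circular as stated: those estimates apply to solutions of the PDEs, and identifying $U^\pm$ with such solutions is what the theorem is about. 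To repair the argument you would either have to prove the DPP and the regularity of $U^\pm$ from scratch, or adopt the paper's approximation scheme, in which case the DPP is never needed.
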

For completeness, we show that a viscosity solution to \eqref{limit terminal value problem} is unique under suitable assumptions.
\begin{theorem}\label{thm uniq sol}
Let $g$ be positive, bounded and Lipschitz continuous. Then, a viscosity solution $u$ to the equation \eqref{limit terminal value problem} is unique, if $u$ satisfies a linear growth bound
\begin{align}\label{intro growth cond}
|u(x,t)|\leq c(1+|x|)
\end{align}
for all $(x,t)\in \mathbb R^n\times [0,T]$ and for $c<\infty$ independent of $x,t$.
\end{theorem}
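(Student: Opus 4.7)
My plan is to deduce uniqueness from a standard parabolic comparison principle: any viscosity subsolution $u_1$ and viscosity supersolution $u_2$ of \eqref{limit terminal value problem} of linear growth \eqref{intro growth cond} with $u_1(\cdot,T)\le u_2(\cdot,T)$ must satisfy $u_1\le u_2$ on $\R^n\times[0,T]$. I would first reverse time via $\tilde u_i(x,\tau):=u_i(x,T-\tau)$, which turns the terminal problem into a forward Cauchy problem with initial data $g$, Lipschitz exponent $\tilde p(x,\tau):=p(x,T-\tau)\in[p_{\min},p_{\max}]$, and operator of the form $\triangle^N_{\tilde p}+\mu\cdot D -r$, putting us in the standard Crandall--Ishii--Lions setting.

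Suppose for contradiction that $\sigma:=\sup(\tilde u_1-\tilde u_2)>0$. For $\eps,\alpha,\gamma>0$ consider
\begin{align*}
\Phi(x,y,\tau,s)=\tilde u_1(x,\tau)-\tilde u_2(y,s)-\tfrac{|x-y|^2}{2\eps}-\tfrac{(\tau-s)^2}{2\eps}-\alpha\bigl(\sqrt{1+|x|^2}+\sqrt{1+|y|^2}\bigr)-\tfrac{\gamma}{T-\tau}.
\end{align*}
The linear growth bound forces $\Phi\to-\infty$ as $|x|+|y|\to\infty$ for every fixed $\alpha>0$, so the supremum is attained at some $(x_\eps,y_\eps,\tau_\eps,s_\eps)$; the $\gamma$-term excludes $\tau_\eps=T$, and the initial condition together with uniform continuity near $\tau=0$ keeps the point in the interior. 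Standard penalization estimates yield $|x_\eps-y_\eps|^2/\eps\to 0$ and $(\tau_\eps-s_\eps)^2/\eps\to 0$ as $\eps\to 0$, along with $\Phi(x_\eps,y_\eps,\tau_\eps,s_\eps)\ge\sigma/2$ for $\alpha,\gamma$ small.

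Writing $\triangle^N_q u=\tr(M(q,\xi)D^2u)$ with $M(q,\xi):=I+(q-2)\xi\otimes\xi/|\xi|^2$ whose spectrum lies in $[\min(1,p_{\min}-1),\max(1,p_{\max}-1)]$ for every $\xi\in\mathbb S^{n-1}$, I invoke the parabolic Ishii lemma to obtain $(a_1,\xi_\eps^+,X)$ in the parabolic superjet of $\tilde u_1$ at $(x_\eps,\tau_\eps)$ and $(a_2,\xi_\eps^-,Y)$ in the parabolic subjet of $\tilde u_2$ at $(y_\eps,s_\eps)$, with $\xi_\eps^\pm=(x_\eps-y_\eps)/\eps+O(\alpha)$, $a_1-a_2=\gamma/(T-\tau_\eps)^2$, and $X\le Y+O(\alpha)$ in the matrix order. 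Subtracting the two viscosity inequalities and absorbing the drift contribution $\mu\cdot(\xi_\eps^+-\xi_\eps^-)=O(\alpha)$ yields
\begin{align*}
\tfrac{\gamma}{(T-\tau_\eps)^2}+r\bigl(\tilde u_1(x_\eps,\tau_\eps)-\tilde u_2(y_\eps,s_\eps)\bigr)\le \tr(M_1X)-\tr(M_2Y)+O(\alpha),
\end{align*}
where $M_1:=M(\tilde p(x_\eps,\tau_\eps),\xi_\eps^+)$ and $M_2:=M(\tilde p(y_\eps,s_\eps),\xi_\eps^-)$. Splitting as $\tr(M_1(X-Y))+\tr((M_1-M_2)Y)$, the first summand is $\le O(\alpha)$ by positivity of $M_1$ and the matrix bound, while the second is controlled using Lipschitz continuity of $p$ combined with the sharper Ishii estimate obtained by testing against $(\xi_\eps,\xi_\eps)$, which eliminates the dangerous $O(\|Y\|)$ factor from the anisotropic part of $\triangle^N_p$. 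Sending $\eps\to 0$, then $\alpha\to 0$, then $\gamma\to 0$ drives the right-hand side to zero, contradicting the positivity of $\gamma/(T-\tau_\eps)^2$.

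The main obstacle, precisely the one flagged in the introduction, is the interplay between the $(x,t)$-dependence of $p$ and the singularity of $\triangle^N_{p(x,t)}$ at $Du=0$. The variable exponent destroys translation invariance and forces a delicate balance of the Lipschitz modulus of $p$ against the size of the jet matrices; this only closes because $|x_\eps-y_\eps|=o(\sqrt{\eps})$ and because the Ishii inequality tested against $(\xi_\eps,\xi_\eps)$ gives a bound on $\langle(X-Y)\xi_\eps,\xi_\eps\rangle$ independent of $\|Y\|$. The singular case $\xi_\eps\approx 0$ is handled via the upper- and lower-semicontinuous envelopes of $F$: since $M(q,\cdot)$ stays in the fixed ellipticity cone $[\min(1,p_{\min}-1)I,\max(1,p_{\max}-1)I]$ for every unit direction, the same matrix inequality still delivers the needed bound. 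The unbounded domain is accommodated by the $\alpha\sqrt{1+|\cdot|^2}$ penalty, which is exactly where the linear growth hypothesis \eqref{intro growth cond} is essential.
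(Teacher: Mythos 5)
Your overall architecture (doubling of variables, a barrier for space infinity, a $\gamma$-barrier for the open time end, the parabolic theorem of sums, and Lipschitz continuity of $p$ to absorb the variable-coefficient error) is the same as the paper's proof of Theorem \ref{comparison limit eq}, but three of your concrete choices break the argument. First, the barrier $\alpha\bigl(\sqrt{1+|x|^2}+\sqrt{1+|y|^2}\bigr)$ grows only linearly, i.e.\ at the same rate allowed for $u$ by \eqref{intro growth cond}; for $\alpha$ smaller than the growth constant $c$ the penalized function need not be bounded above (take $\tilde u_1(x)-\tilde u_2(x)\sim c|x|$ on the diagonal), so the maximum point you rely on need not exist, and you must send $\alpha\to0$ at the end. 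The paper uses the superlinear barrier $\delta(|x|^2+|y|^2)$ and then proves separately that $\delta(|\hat x|^2+|\hat y|^2)\to0$ as $\delta\to0$; that extra step is needed to kill the drift and ellipticity contributions of the barrier. Second, the quadratic doubling penalty $|x-y|^2/(2\eps)$ cannot handle the singularity of $\triangle^N_{p}$ at $Du=0$. When $x_\eps=y_\eps$ the gradient entries of the jets vanish while the Ishii matrix bound is still of size $\eps^{-1}$, and the two viscosity inequalities involve $F^*(\cdot,0,X)$, a \emph{supremum} over directions, minus $F_*(\cdot,0,Y)$, an \emph{infimum} over (in general different) directions; these cannot be paired against the two-sided matrix inequality, and the "fixed ellipticity cone" observation does not rescue this. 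This is exactly why the paper (following Chen--Giga--Goto and Giga--Goto--Ishii--Sato) penalizes with $|x-y|^4/(4\eps)$: then both the gradient and the Hessian of the penalty vanish as $\hat x-\hat y\to0$, forcing $\limsup\langle Xz,z\rangle\le0\le\liminf\langle Yz,z\rangle$ and reducing the singular case to $F^*(\cdot,0,\mathbf 0)=F_*(\cdot,0,\mathbf 0)$.

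Third, and most importantly, your treatment of the variable-$p$ term does not close. The split $\tr(M_1X)-\tr(M_2Y)=\tr\bigl(M_1(X-Y)\bigr)+\tr\bigl((M_1-M_2)Y\bigr)$ leaves the term $\bigl(p(x_\eps,\cdot)-p(y_\eps,\cdot)\bigr)\langle Y\hat\xi,\hat\xi\rangle$, which is only of size $L_p|x_\eps-y_\eps|\,\|Y\|\lesssim L_p|x_\eps-y_\eps|/\eps$; the penalization estimate gives $|x_\eps-y_\eps|=o(\sqrt\eps)$, so this is $o(\eps^{-1/2})$ and blows up. Testing the Ishii inequality against $(\xi_\eps,\xi_\eps)$ only controls $\langle(X-Y)\xi_\eps,\xi_\eps\rangle$ and gives no information on $\langle Y\hat\xi,\hat\xi\rangle$ alone, so it does not "eliminate the $O(\|Y\|)$ factor" as you claim. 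The paper's key inequality \eqref{final estimate to get contra} instead tests the block inequality against the \emph{unequal} pair
\begin{align*}
\Bigl(\sqrt{p(\hat x,\hat t)-1}\,\xi_\delta,\ \sqrt{p(\hat y,\hat t)-1}\,\xi_\delta\Bigr),
\end{align*}
after rewriting the second-order terms as in \eqref{where to use key ineq}; the right-hand side then produces $\bigl(\sqrt{p(\hat x,\hat t)-1}-\sqrt{p(\hat y,\hat t)-1}\bigr)^2\lesssim L_p^2|\hat x-\hat y|^2/(p_{\text{min}}-1)$ multiplied by the penalty Hessian, which vanishes in the limit. Without this weighted choice of test vectors (or an equivalent device) the comparison argument, and hence your uniqueness proof, has a genuine gap.
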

Because $g$ is bounded, the functions $U^-$ and $U^+$ satisfy \eqref{intro growth cond}. Thus, Theorems \ref{main theorem of the paper} and \ref{thm uniq sol} imply the following.
\begin{corollary}
The game has a value at every $(x,t)\in \mathbb R^n\times [0,T]$.
\end{corollary}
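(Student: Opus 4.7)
The corollary is essentially a one-line deduction from the two theorems immediately preceding it, so the proposal is short. The plan is to check that both values $U^-$ and $U^+$ fall into the uniqueness class of Theorem \ref{thm uniq sol}, and then invoke that uniqueness to conclude that the two values coincide.

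First I would note that by Theorem \ref{main theorem of the paper}, both $U^-$ and $U^+$ are viscosity solutions to the terminal value problem \eqref{limit terminal value problem} with the same terminal data $g$. Next I would verify the linear growth bound \eqref{intro growth cond} for both functions. Since $g$ is bounded and $r\geq 0$, for every admissible control $A$ and strategy $S$ we have
\begin{align*}
\left|\mathbb E\Big[e^{-r(T-t)}g\big(X(T)\big)\Big]\right|\leq e^{-r(T-t)}\|g\|_{\infty}\leq \|g\|_{\infty}
\end{align*}
for all $(x,t)\in\mathbb R^n\times[0,T]$. Taking $\inf_S\sup_A$ and $\sup_S\inf_A$ gives $|U^{\pm}(x,t)|\leq \|g\|_{\infty}\leq \|g\|_{\infty}(1+|x|)$, so \eqref{intro growth cond} holds with $c=\|g\|_{\infty}$.

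Finally, I would apply Theorem \ref{thm uniq sol} to conclude that $U^-=U^+$ identically on $\mathbb R^n\times[0,T]$, which is exactly the statement that the game has a value at every $(x,t)$. There is no genuine obstacle here: all the work has already been done in Theorems \ref{main theorem of the paper} and \ref{thm uniq sol}, and the proof reduces to observing that boundedness of $g$ places $U^-$ and $U^+$ in the uniqueness class.
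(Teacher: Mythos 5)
Your proof is correct and follows exactly the paper's argument: the paper likewise observes that boundedness of $g$ gives the linear growth bound \eqref{intro growth cond} for $U^\pm$, and then combines Theorems \ref{main theorem of the paper} and \ref{thm uniq sol} to conclude $U^-=U^+$. The explicit estimate $|U^{\pm}(x,t)|\leq \|g\|_{\infty}$ you supply is a harmless elaboration of what the paper leaves implicit.
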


As an application, one could study our model in the context of the portfolio option pricing. This would be based on the idea that, in addition to a random noise, the prices of the underlying assets are influenced by the two competing players. Roughly speaking, one can see the players as the issuer and the holder of the corresponding option. The issuer and the holder try, respectively, to manipulate the drifts and the volatilities of the assets to minimize and maximize, respectively, the expected discounted reward at the time $T$. The time $T$ can be interpreted as a maturity; it is the time on which the corresponding financial instrument must either be renewed or it will cease to exist. To some extent, we generalize the model developed by Nystr\"om and Parviainen in \cite{nystromp17}. Indeed, our contribution is the introduction of a local volatility $p$. The volatility of an asset may vary over the space and the time.

\subsection{An outline of the proofs  of Theorems \ref{main theorem of the paper} and \ref{thm uniq sol}}
Our approach is influenced by the papers \cite{swiech96,atarb10,nystromp17}. First, we examine games with uniformly bounded action sets, and in the end, let the uniform bound tend to the infinity. Here, the important step is to connect the value functions under uniformly bounded action sets to the terminal value problems of Bellman-Isaacs type equations
\begin{align}
\begin{split}\label{eq: intro 2}
\begin{cases}
\partial_tu-F_m^-\big((x,t),u,Du,D^2u\big)=0~~&\text{in }\mathbb R^n\times (0,T), \\
u(x,T)=g(x)~~&\text{on }\mathbb R^n,
\end{cases}
\end{split}
\end{align}
and
\begin{align}
\begin{split}\label{eq: intro 1}
\begin{cases}
\partial_tu-F_m^+\big((x,t),u,Du,D^2u\big)=0~~&\text{in }\mathbb R^n\times (0,T), \\
u(x,T)=g(x)~~&\text{on }\mathbb R^n.
\end{cases}
\end{split}
\end{align}
The exact definitions of $F_m^-$ and $F_m^+$ are given in Section \ref{sec: pre} below. Here, $m$ denotes the uniform bound on the controls. The uniqueness of viscosity solutions to \eqref{eq: intro 2} and \eqref{eq: intro 1}  follows, for example, from \cite{gigagis91,buckdahnl08}. Furthermore, the existence of viscosity solutions to the equations \eqref{eq: intro 2} and \eqref{eq: intro 1} follows by the construction of suitable barriers (Lemma \ref{existence}) and by the use of Perron's method.

In Section \ref{sec: bounded controls}, the main result is Lemma \ref{lemma: game value} in which we show that a lower value function with uniformly bounded controls equals to the unique solution $u_m$ to \eqref{eq: intro 2}. In the proof, we first regularize the solution $u_m$ by sup- and inf-convolutions, and then deduce the equality by utilizing Ito's formula and passing to limits.

In section \ref{sec: limit}, we examine the problem \eqref{limit terminal value problem}. First, we prove Theorem \ref{thm uniq sol}. To prove a comparison principle, we double the variables and apply the celebrated theorem of sums, see \cite{crandalli90a}. Because we only consider solutions satisfying a linear growth bound in the whole space, we utilize a quadratic barrier function for the space infinity. Furthermore, we use the Lipschitz continuity of $p$ to estimate the error coming from a penalty function. To continue, in Lemma \ref{lemma: limit eq} we show that
$$
F_m^-\to F
$$
as $m\to \infty$. Furthermore, in Lemma \ref{lemma: equicontinuity} we utilize the results of \cite{krylovs80,wang92} to show that the family
$$
\{u_m:m\geq1\}
$$
is equicontinuous. Finally by the reduction of test functions (Lemma \ref{test f reduction}) and the stability principle for viscosity solutions, we can utilize the Arzel\`{a}-Ascoli theorem to find a solution $u$ to \eqref{limit terminal value problem} and a subsequence $(u_{m_j})$ converging uniformly to $u$ as $j\to \infty$. To complete the proof of Theorem \ref{main theorem of the paper}, we also need the fact that the subsequence of the corresponding lower value functions converges to the lower value function for the game without the uniform bound on the controls. In addition, all the proofs in the context of the equation \eqref{eq: intro 1} are analogous. 

\subsection*{Acknowledgement}
The author would like to thank Mikko Parviainen for many discussions and insightful comments regarding this work.
\section{Preliminaries}\label{sec: pre}
Let $\ol{W}=(W^1,W^2)^T$ be a $2n$-dimensional Brownian motion such that $W^1=(W_1^1,\dots,W_n^1)$ and $W^2=(W_1^2,\dots,W_n^2)$ are $n$-dimensional Brownian motions. Let $(\Omega,\mathcal F,\{\mathcal F_s\},\mathbb P)$ denote a complete filtered probability space with right-continuous filtration supporting the process $\ol{W}$. As mentioned above, we consider the following stochastic differential equation
\begin{align}
\begin{split}\label{stoc dynamics}
\begin{cases}
dX(s)&=\rho\big(G(s)\big)\ud s+\sigma\big(X(s),G(s)\big)\ud \ol{W}(s)\\
X(0)&=x
\end{cases}
\end{split}
\end{align}
for $s\in [0,T]$, $T>0$ and $x\in \mathbb R^n$ with $G: [0,T] \to \mathcal{CS}$, $\rho:\mathcal{CS}\to \mathbb R^n$ and $\sigma: \mathbb R^n \times \mathcal{CS}\to M^{n\times2n}$. Here, we define $\mathcal{CS}:=\mathbb S^{n-1}\times \mathbb S^{n-1}\times [0,\infty)\times [0,\infty)$, where $\mathcal{CS}$ refers to control space. Furthermore, $M^{n\times2n}$ is the set of $n\times 2n$ matrices.

We are interested in the following form of the functions $G$, $\rho$ and $\sigma$. Let $A_1:=\big(a(s),c(s)\big)$ and $A_2:=\big(b(s),d(s)\big)$ be admissible controls of the players in the sense of Definition \ref{def:admissible controls}, respectively. Furthermore, let $\mu \in \mathbb R^n$. Then, for $s\in [0,T]$, we define
$$
G(s)=\big(a(s),b(s),c(s),d(s)\big),
$$
and
$$
\rho\big(G(s)\big)=\mu + \big(c(s)+d(s)\big)\big(a(s)+b(s)\big).
$$
Let $\nu\in \mathbb S^{n-1}$, and denote the orthogonal complement of $\nu$ by
$$
\nu^\bot:=\{z\in \mathbb R^n: \langle z,\nu\rangle=0\}.
$$
We set $P_{\nu}^\bot$ to be a $n \times (n-1)$ matrix such that the columns are $p_{\nu}^1,\dots,p_{\nu}^{n-1}$, where $\{p_{\nu}^1,\dots,p_{\nu}^{n-1}\}$ is  a fixed orthonormal basis of $\nu^\bot$,
$$
P_\nu^\bot=\big[p_\nu^1 ~\cdots ~ p_\nu^{n-1}\big].
$$
We can define the basis of $\nu^\bot$ such that the function $\nu \mapsto P_\nu^\bot $ is continuous. In addition, let $p: \mathbb R^n \times [0,T]\to \mathbb R$  be a Lipschitz continuous function  such that
\begin{equation}\label{p range}
p_{\text{min}}=\inf_{y \in\mathbb R^n\times [0,T]}p(y)>1~\text{and  } p_{\text{max}}=\sup_{y \in\mathbb R^n\times [0,T]}p(y)<\infty.
\end{equation}
With respect to the time variable $t$, we only need that $p$ is H\"{o}lder continuous for all fixed $x$, but we minimize additional technical difficulties. Now, we define the $n\times 2n$ matrix $\sigma$ to be
\begin{align*}
&\sigma\big(X(s),G(s)\big) \\
&= \Big[a(s)\sqrt{p\big(X(s),s\big)-1};~\hspace{4pt} P_{a(s)}^\bot;~\hspace{4pt} b(s)\sqrt{p\big(X(s),s\big)-1};~\hspace{4pt} P_{b(s)}^\bot\Big].
\end{align*}
By the game dynamics \eqref{stoc dynamics}, we get
\begin{align}
\begin{split}\label{comp stoc dynamics}
dX_i(s)=\,&\Big[\mu_i+\big(c(s)+d(s)\big)\big(a_i(s)+b_i(s)\big)\Big]\ud s \\
&+\sqrt{p\big(X(s),s\big)-1}\Big(a_i(s)\ud W_1^1(s)+b_i(s)\ud W^2_1(s)\Big) \\
&+\sum_{k=2}^n \big({\overset{\to}{p}}_{a(s)}^i\big)_{k-1}\ud W_k^1(s)+\sum_{k=2}^n\big({\overset{\to}{p}}_{b(s)}^i\big)_{k-1}\ud W_k^2(s)
\end{split}
\end{align}
for all $i \in \{1,\dots,n\}$. Here, $\big({\overset{\to}{p}}_{\nu}^i\big)$ denotes the $i$-th row vector of $P_{\nu}^\bot$.

By a strong solution to the stochastic differential equation \eqref{stoc dynamics}, we mean a progressively measurable process $(X(t))$ with respect to the Brownian filtration $\{\mathcal F_t\}$ such that the stochastic integral in right-hand side of \eqref{stoc dynamics} is defined and furthermore, $X(t)$ coincides with the right-hand side of \eqref{stoc dynamics} for all $t\in [0,T]$ almost surely. In addition, a strong solution is pathwise unique, if any two given solutions $\big(X(t),Y(t)\big)$ satisfy $$\mathbb P\big(\sup_{t\in[0,T]}|X(t)-Y(t)|>0\big)=0.$$  
Let us denote by $|\cdot|_F$ the Frobenius norm
$$
||\sigma||_F:=\sqrt{\tr(\sigma\sigma^T)}
$$
for all $\sigma \in  M^{n\times2n}$. Then by \eqref{p range}, it holds
\begin{align}\label{intro: stoc int well defined}
\mathbb E\int_0^T||\sigma\big(X(l),G(l)\big)||^2_F\ud l\leq 2T(p_{\text{max}}-2+n)<\infty.
\end{align}
Hence, the stochastic integral in the right-hand side of \eqref{stoc dynamics} is well defined. Furthermore, the functions $\rho$ and $\sigma$ are continuous with respect to the control parameters. Because the controls of the players are admissible, it holds 
\begin{align}\label{existence admissible}
\mathbb E\int_0^T\Big|\rho\big(G(s)\big)\Big|^2\ud s\leq \big(|\mu|+2(\Lambda(A_1)+\Lambda(A_2))\big)^2T<\infty
\end{align}
for $\Lambda(A_1),\Lambda(A_2)<\infty$, where $\Lambda(\cdot)$ is defined in \eqref{admissible controls}. Moreover, we can estimate
\begin{align*}
||\sigma\big(x,G(t)\big)-\sigma\big(y,G(t)\big)||_F&\leq \sqrt{2}\big|\sqrt{p(x,t)-1}-\sqrt{p(y,t)-1}\big|\\
&\leq \frac{|p(x,t)-p(y,t)|}{\sqrt{2p_{\text{min}}-2}} \\
&\leq \frac{L_p}{\sqrt{2p_{\text{min}}-2}}|x-y|
\end{align*}
for all $x,y\in \mathbb R^n$ and $t\in[0,T]$ with $L_p$ denoting the Lipschitz constant of $p$. Therefore by combining this, \eqref{intro: stoc int well defined}, \eqref{existence admissible} and \cite[Theorem 2.5.7]{krylov80}, the SDE \eqref{stoc dynamics} admits a pathwise unique strong solution.

Throughout, we denote by $||\cdot||$ a matrix norm
$$
||M||:=\sup_{|x|=1}\big|\langle Mx,x\rangle \big|
$$
for all $n \times n$ matrices $M$. Furthermore, $S(n)$ denotes the set of all symmetric $n\times n$ matrices, $I$ is the $n\times n$ identity matrix, and for $\xi \in \mathbb R^{n}$, we denote by $\xi \otimes \xi$ the $n\times n$ matrix for which $(\xi\otimes \xi)_{ij}=\xi_i\xi_j$. A function $\zeta:[0,\infty)\to [0,\infty)$ is said to be a modulus, if it is continuous, nondecreasing, and satisfies $\zeta(0) = 0$.

\subsection{Viscosity solutions to Bellman-Isaacs equations with uniformly bounded action sets}
We define $\Phi : \mathcal{CS} \times  \mathbb R^n \times [0,T]\times \mathbb R^n \times S(n)\to \mathbb R$ through
\begin{align*}
\Phi\big(a,b,c,d;(x,t),\nu,M\big)=&-\tr\Big(\mathcal A_{a,b}^{(x,t)}M\Big)-(c+d)\langle a+b,\nu\rangle  -\langle \mu, \nu \rangle,
\end{align*}
where
\begin{align}\label{alpha matrix}
\mathcal A_{a,b}^{(x,t)}:=\frac{1}{2}\big(p(x,t)-2\big)(a \otimes a+b\otimes b)+I.
\end{align}
Observe that the matrix $\mathcal A_{a,b}^{(x,t)}$ is symmetric with eigenvalues between the values
\begin{align}\label{iso lambda}
\lambda:=\min\{1,p_{\text{min}}-1\}\text{ and }\Lambda:=\max\{1,p_{\text{max}}-1\}.
\end{align}

Given $m\in\{1,2,\dots\}$, we let 
$$
\mathcal H_m:=\mathbb S^{n-1}\times [0,m],
$$
and define $F_m^-,F_m^+:  \mathbb R^n\times [0,T]\times \mathbb R \times\mathbb R^n \times \mathcal S(n) \to \mathbb R$ through
\begin{align*}
F_m^-\big((x,t),\xi,\nu,M\big)&= \inf_{(a,c)\in \mathcal H_m}\sup_{(b,d)\in \mathcal H_m} \Phi\big(a,b,c,d;(x,t),\nu,M\big)+r\xi, \\
F_m^+\big((x,t),\xi,\nu,M\big)&=\sup_{(b,d)\in \mathcal H_m} \inf_{(a,c)\in \mathcal H_m} \Phi\big(a,b,c,d;(x,t),\nu,M\big)+r\xi
\end{align*}
for $r\geq 0$. Let $g:\mathbb R^n \to \mathbb R$ be a positive bounded Lipschitz function such that
\begin{align}\label{g assumptions}
\sup_{x\in \mathbb R^n} g(x)+\sup_{x,y\in \mathbb R^n,x\not=y} \frac{|g(x)-g(y)|}{|x-y|}<L_g
\end{align}
for some $L_g<\infty$. We study terminal value problems
\begin{align}
\begin{split}\label{eq: bdd 2}
\begin{cases}
\partial_tu-F_m^-\big((x,t),u,Du,D^2u\big)=0~~&\text{in }\mathbb R^n\times (0,T), \\
u(x,T)=g(x)~~&\text{on }\mathbb R^n
\end{cases}
\end{split}
\end{align}
and
\begin{align}
\begin{split}\label{eq: bdd 1}
\begin{cases}
\partial_tu-F_m^+\big((x,t),u,Du,D^2u\big)=0~~&\text{in }\mathbb R^n\times (0,T), \\
u(x,T)=g(x)~~&\text{on }\mathbb R^n.
\end{cases}
\end{split}
\end{align}
A common notion of weak solutions to these equations is viscosity solutions. We only consider solutions $u$ which satisfy a linear growth condition
\begin{equation}\label{linear growth}
|u(x,t)|\leq c(1+|x|)
\end{equation}
for all $(x,t)\in \mathbb R^n\times [0,T]$ and for some $c<\infty$ independent of $x,t$. We prove that there exists a unique viscosity solution to the equation \eqref{eq: bdd 2} satisfying the condition \eqref{linear growth}. We omit the proof for \eqref{eq: bdd 1}, because it is analogous. The proofs are based on the comparison principle and Perron's method. 
\begin{definition}\label{def: belmann-i}

($i$) A lower semicontinuous function $\ol{u}_m:\mathbb R^n \times [0,T] \to \mathbb R$ is a viscosity supersolution to \eqref{eq: bdd 2}, if it satisfies \eqref{linear growth}, 
$$
\ol{u}_m(x,T)\geq g(x)
$$
for all $x \in \mathbb R^n$, and if the following holds. For all $(x_0,t_0)\in \mathbb R^n \times (0,T)$ and for all $\phi \in C^{2,1}\big(\mathbb R^n \times (0,T)\big)$ such that
\begin{itemize}
\item $\ol{u}_m(x_0,t_0)=\phi(x_0,t_0)$
\item $\ol{u}_m(x,t)>\phi(x,t)$ for all  $(x,t)\not=(x_0,t_0)$
\end{itemize}
it holds
$$
\partial_t\phi(x_0,t_0)-F_m^-\big((x_0,t_0),\phi(x_0,t_0),D\phi(x_0,t_0),D^2\phi(x_0,t_0)\big)\leq 0.
$$

($ii$) An upper semicontinuous function $\underline{u}_m:\mathbb R^n \times [0,T] \to \mathbb R$ is a viscosity subsolution to \eqref{eq: bdd 2}, if it satisfies \eqref{linear growth}, 
$$
\underline{u}_m(x,T)\leq g(x)
$$
for all $x \in \mathbb R^n$, and if the following holds. For all $(x_0,t_0)\in \mathbb R^n \times (0,T)$ and for all $\phi \in C^{2,1}\big(\mathbb R^n \times (0,T)\big)$ such that
\begin{itemize}
\item $\underline{u}_m(x_0,t_0)=\phi(x_0,t_0)$
\item $\underline{u}_m(x,t)<\phi(x,t)$ for all  $(x,t)\not=(x_0,t_0)$
\end{itemize}
it holds
$$
\partial_t\phi(x_0,t_0)-F_m^-\big((x_0,t_0),\phi(x_0,t_0),D\phi(x_0,t_0),D^2\phi(x_0,t_0)\big)\geq 0.
$$

($iii$) If a function $u_m :\mathbb R^n \times [0,T] \to \mathbb R$ is a viscosity supersolution and a subsolution to \eqref{eq: bdd 2}, then $u_m$ is a viscosity solution to \eqref{eq: bdd 2}.
\end{definition}
Observe that we require the growth condition \eqref{linear growth} as a standing assumption for viscosity super- and subsolutions. We start with the following lemma. 

\begin{lemma}\label{existence} 
Let $y\in \mathbb R^n$, $0<\eps<1$, and let $L_g$ be the constant in \eqref{g assumptions} for $g$. Then, the functions
$$
\ol{a}(x,t)=g(y)+\frac{A}{\eps^{1/2}}(T-t)+2L_g\big(|x-y|^2+\eps\big)^{1/2},
$$ 
$$
\underline{a}(x,t)=g(y)-\frac{A}{\eps^{1/2}}(T-t)-2L_g\big(|x-y|^2+\eps\big)^{1/2}
$$
are viscosity super- and subsolutions to \eqref{eq: bdd 2}, respectively, if we choose $A$, independent of $y,\eps$ and $m$, large enough. 
\begin{proof}
Because $g$ is Lipschitz continuous with \eqref{g assumptions}, we get
\begin{align*}
\underline{a}(x,T)&\leq g(x)\leq \ol{a}(x,T)
\end{align*}
for all $x\in \mathbb R^n$. Furthermore, $\underline{a}$ and $\ol{a}$ satisfy \eqref{linear growth}. First, we prove that $\ol{a}$ is a supersolution. To establish this, since $\ol{a}$ is a smooth function, we need to show that
$$
\partial_t\ol{a}(x,t)-F_m^-\big((x,t),\ol{a}(x,t),D\ol{a}(x,t),D^2\ol{a}(x,t)\big)\leq 0
$$
for all $(x,t)\in \mathbb R^n \times (0,T)$. Let $(x,t)\in \mathbb R^n \times (0,T)$. By a direct calculation, it holds
\begin{align*}
D\ol{a}(x,t)=2L_g\big(|x-y|^2+\eps\big)^{-1/2}(x-y)
\end{align*}
and
\begin{align*}
D^2\ol{a}(x,t)=2L_g\big(|x-y|^2+\eps\big)^{-1/2}\bigg(I-\frac{(x-y)\otimes(x-y)}{|x-y|^2+\eps}\bigg).
\end{align*}
Thus, we can estimate
\begin{align*}
&-\tr\Big(\mathcal A_{a,b}^{(x,t)}D^2\ol{a}(x,t)\Big)= 2L_g\big(|x-y|^2+\eps\big)^{-1/2}\cdot\\
&\bigg\{\tr\Big(\mathcal A_{a,b}^{(x,t)}\Big(\big(|x-y|^2+\eps\big)^{-1}(x-y)\otimes (x-y)-I\Big)\Big)\bigg\} \\
&\geq -2n\Lambda L_g\big(|x-y|^2+\eps\big)^{-1/2}
\end{align*}
for all $a,b \in \mathbb S^{n-1}$. Furthermore, we have $\partial_t\ol{a}(x,t)=-A\eps^{-1/2}$.

We can assume $x\not=y$, because otherwise the next term below is zero. It holds
\begin{align*}
&\inf_{(a,c)\in \mathcal H_m}\sup_{(b,d)\in \mathcal H_m}-(c+d)\big\langle a+b,D\ol{a}(x,t)\big\rangle \\
&\geq 2L_g\big(|x-y|^2+\eps\big)^{-1/2}\inf_{(a,c)\in \mathcal H_m}-c\big\langle a-(x-y)/|x-y|,x-y\big\rangle \\
& \geq 0.
\end{align*}
In addition, we can estimate
$$
\Big|\big\langle \mu,D\ol{a}(x,t)\big\rangle\Big|\leq 2L_g|\mu||x-y|\big(|x-y|^2+\eps\big)^{-1/2}\leq 2L_g|\mu|.
$$
By combining our estimates above, we have
\begin{align*}
&\partial_t\ol{a}(x,t)-F_m^-\big((x,t),\ol{a}(x,t),D\ol{a}(x,t),D^2\ol{a}(x,t)\big)\\
&\leq -A\eps^{-1/2}+ 2n\Lambda L_g\big(|x-y|^2+\eps\big)^{-1/2}+2L_g|\mu|-r\ol{a}(x,t) \\
&\leq \eps^{-1/2}\big(-A+2n\Lambda L_g\big)+2L_g|\mu|.
\end{align*}
Hence, if we choose
$$
A=4L_g\big (n\Lambda+|\mu|\big),
$$
we can conclude that $\ol{a}$ is a supersolution to \eqref{eq: bdd 2}.

The proof that $\underline{a}$ is a subsolution to \eqref{eq: bdd 2} is very similar to the above. We need to show that 
$$
\partial_t\underline{a}(x,t)-F_m^-\big((x,t),\underline{a}(x,t),D\underline{a}(x,t),D^2\underline{a}(x,t)\big)\geq 0.
$$
Observe that for $x\not=y$, we have this time 
\begin{align*}
&\inf_{(a,c)\in \mathcal H_m}\sup_{(b,d)\in \mathcal H_m}-(c+d)\big\langle a+b,D\ol{a}(x,t)\big\rangle \\
&\leq 2L_g\big(|x-y|^2+\eps\big)^{-1/2}\sup_{(b,d)\in \mathcal H_m}-d\big\langle (x-y)/|x-y|+b,x-y\big\rangle \\
& \leq 0
\end{align*}
by estimating the infimum instead of the supremum. Thus, by repeating the argument above, we have
\begin{align*}
&\partial_t\underline{a}(x,t)-F_m^-\big((x,t),\underline{a}(x,t),D\underline{a}(x,t),D^2\underline{a}(x,t)\big) \\
&\geq \eps^{-1/2}\big(A- 2n\Lambda L_g\big)-2L_g|\mu|-r\underline{a}(x,t).
\end{align*}
Recall the assumption \eqref{g assumptions} implying $-r\underline{a}(x,t)\geq -rL_g$. Therefore by adjusting the constant $A$ large enough, we can conclude that $\underline{a}$ is a subsolution to  \eqref{eq: bdd 2}.
\end{proof}
\end{lemma}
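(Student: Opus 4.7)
The plan is to verify the three requirements of Definition \ref{def: belmann-i} directly, and since both barriers are $C^{2,1}$, the viscosity inequalities reduce to pointwise classical ones. I would first dispense with the linear growth condition via $(|x-y|^2+\eps)^{1/2} \leq |x-y|+\sqrt{\eps}$, and with the terminal inequalities via $|g(x)-g(y)| \leq L_g|x-y| \leq 2L_g(|x-y|^2+\eps)^{1/2}$; these are routine warm-ups.

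The substance of the proof is the pointwise PDE inequality for $\ol{a}$. I would compute
$$\partial_t\ol{a} = -A\eps^{-1/2},\qquad D\ol{a} = 2L_g(|x-y|^2+\eps)^{-1/2}(x-y),$$
$$D^2\ol{a} = 2L_g(|x-y|^2+\eps)^{-1/2}\Bigl(I - \tfrac{(x-y)\otimes(x-y)}{|x-y|^2+\eps}\Bigr).$$
The bracketed matrix is positive semidefinite with operator norm at most $1$, and $\mathcal A_{a,b}^{(x,t)}$ has eigenvalues in $[\lambda,\Lambda]$; multiplying yields, uniformly in $a,b \in \mathbb S^{n-1}$,
$$-\tr\bigl(\mathcal A_{a,b}^{(x,t)} D^2\ol{a}\bigr) \geq -2n\Lambda L_g(|x-y|^2+\eps)^{-1/2}.$$
The key step is a lower bound on the $\inf\sup$ of the drift $-(c+d)\langle a+b,D\ol{a}\rangle$. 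Because $D\ol{a}$ is a positive multiple of $x-y$, I would evaluate the supremum at the specific feasible choice $b = -(x-y)/|x-y|$, $d=0$ (available when $x\neq y$, else the term vanishes), which collapses the inner expression to $-c\langle a - (x-y)/|x-y|, x-y\rangle$, nonnegative for every $a \in \mathbb S^{n-1}$ and $c \geq 0$ since $\langle a, x-y\rangle \leq |x-y|$; hence the infimum is $\geq 0$. Combining this with $|\langle\mu,D\ol{a}\rangle| \leq 2L_g|\mu|$ and $-r\ol{a}\leq 0$ (using $\ol{a}>0$ from $g>0$), I would arrive at
$$\partial_t\ol{a} - F_m^-\bigl((x,t),\ol{a},D\ol{a},D^2\ol{a}\bigr) \leq \eps^{-1/2}(-A + 2n\Lambda L_g) + 2L_g|\mu|,$$
and the choice $A = 4L_g(n\Lambda + |\mu|)$ would make the right-hand side $\leq 0$ for every $0<\eps<1$, uniformly in $y$ and $m$.

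The subsolution case for $\underline{a}$ runs by a mirror argument: $D\underline{a} = -D\ol{a}$ flips the trace inequality, and the $\inf\sup$ of the drift is bounded from above by pinning $a = -(x-y)/|x-y|$ and $c=0$ in the infimum; the surviving supremum reduces to $2L_g(|x-y|^2+\eps)^{-1/2}\sup_{(b,d)} d(-|x-y| + \langle b, x-y\rangle)$, which is $\leq 0$ since the bracket is nonpositive and $d\geq 0$. The only extra wrinkle is the reaction term $-r\underline{a}$, which I would handle via $\underline{a} \leq g(y) \leq L_g$ from \eqref{g assumptions}, giving $-r\underline{a} \geq -rL_g$, a fixed constant absorbed by enlarging $A$ by an amount depending only on $r$ and $L_g$ (still independently of $y,\eps,m$).

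The single nontrivial observation — and the only step I would not call routine bookkeeping — is the drift estimate: the radial structure of the barrier forces $D\ol{a}$ to be parallel to $x-y$, so the controlling player can always align (or anti-align) with this direction at zero net cost, which is what produces the favorable sign on the $\inf\sup$. The blowup rates of the trace and drift contributions both end up being $(|x-y|^2+\eps)^{-1/2}$, and this is exactly matched by the $A\eps^{-1/2}$ supplied by $\partial_t$ in the critical regime $|x-y|\ll\sqrt{\eps}$, which is why a single $A$ independent of all relevant parameters suffices.
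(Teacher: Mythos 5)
Your proposal is correct and follows essentially the same route as the paper's proof: the same explicit derivative computations, the same trace bound using the eigenvalue range $[\lambda,\Lambda]$ of $\mathcal A_{a,b}^{(x,t)}$, the same choice of an (anti-)aligned direction with zero length to fix the sign of the $\inf\sup$ of the drift term, and the same handling of the $\mu$ and $r$ contributions leading to $A=4L_g(n\Lambda+|\mu|)$, enlarged by an $r$-dependent amount for the subsolution. There is nothing to add.
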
  

A useful tool for us is the comparison principle.
\begin{lemma}\label{uniqueness}
Let $\underline{u}_m$ and $\ol{u}_m$ be continuous viscosity sub- and supersolutions to \eqref{eq: bdd 2} in the sense of Definition \ref{def: belmann-i}, respectively. Then, it holds
$$
\underline{u}_m(x,t)\leq \ol{u}_m(x,t)
$$
for all $(x,t)\in \mathbb R^n \times [0,T]$.
\end{lemma}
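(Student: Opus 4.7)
The proof is the standard doubling-of-variables comparison argument, with two features that handle the parabolic terminal value problem on an unbounded spatial domain under the linear-growth assumption \eqref{linear growth}: an exponential change of variable and a quadratic spatial barrier.

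Suppose for contradiction that $M_0 := \sup_{\mathbb R^n \times [0,T]}(\underline{u}_m - \ol{u}_m) > 0$. Since the inner Hamiltonian $G_m^-(x,t,\nu,M) := F_m^-(x,t,0,\nu,M)$ is positively $1$-homogeneous in $(\nu, M)$, the substitution $\tilde u(x,t) := e^{-K(T-t)} u(x,t)$ transforms the equation into one of identical form but with $r$ replaced by $r + K$; sub- and supersolutions and the terminal condition are preserved, so we may assume $r$ is as large as we wish. The parameter $K$ will be fixed at the end to absorb a bounded drift error. For $\alpha > 0$ and small $\delta > 0$, define
\[
\Psi_{\alpha, \delta}(x, y, t, s) := \underline{u}_m(x,t) - \ol{u}_m(y,s) - \tfrac{\alpha}{2}|x-y|^2 - \tfrac{\alpha}{2}(t-s)^2 - \delta\bigl(|x|^2 + |y|^2\bigr).
\]
By \eqref{linear growth}, $\Psi_{\alpha,\delta} \to -\infty$ as $|x|+|y|\to\infty$, so the supremum $M_{\alpha,\delta}$ is attained at some $(x_\alpha, y_\alpha, t_\alpha, s_\alpha)$. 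Standard penalization estimates give $\alpha(|x_\alpha - y_\alpha|^2 + |t_\alpha - s_\alpha|^2) \to 0$ as $\alpha \to \infty$, $M_{\alpha,\delta} \to M_\delta \to M_0$ as $\alpha \to \infty$ then $\delta \to 0$, and the linear growth bound gives $|x_\alpha|, |y_\alpha| \leq C/\delta$. The case $t_\alpha = T$ or $s_\alpha = T$ is eliminated for large $\alpha$ by combining the terminal inequalities with the Lipschitz continuity of $g$.

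Now apply Crandall-Ishii's parabolic theorem of sums to produce $X, Y \in \mathcal S(n)$ with
\[
\begin{pmatrix} X & 0 \\ 0 & -Y \end{pmatrix} \leq \alpha \begin{pmatrix} I & -I \\ -I & I \end{pmatrix} + 2\delta I_{2n} + \varepsilon(\cdot),
\]
and the common time derivative $\alpha(t_\alpha - s_\alpha)$. Subtracting the sub- and supersolution inequalities cancels the time derivatives and yields
\[
0 \geq F_m^-\bigl((x_\alpha, t_\alpha), \underline{u}_m, \nu_x, X\bigr) - F_m^-\bigl((y_\alpha, s_\alpha), \ol{u}_m, \nu_y, Y\bigr),
\]
where $\nu_x = \alpha(x_\alpha - y_\alpha) + 2\delta x_\alpha$ and $\nu_y = \alpha(x_\alpha - y_\alpha) - 2\delta y_\alpha$. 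Picking $\eta$-optimal $(a^*, c^*)$ in the outer infimum at $(x_\alpha, t_\alpha)$ and $\eta$-optimal $(b^*, d^*)$ in the inner supremum at $(y_\alpha, s_\alpha)$ reduces the problem to estimating $\Phi(a^*, b^*, c^*, d^*; \cdot)$ at the two arguments. The key trace term $\tr(\mathcal A^{(x_\alpha, t_\alpha)}_{a^*,b^*} X) - \tr(\mathcal A^{(y_\alpha, s_\alpha)}_{a^*,b^*} Y)$ is controlled by testing the matrix inequality against $BB^T$, where $B$ is block-diagonal built from $\sigma(x_\alpha, t_\alpha, a^*, b^*)/\sqrt 2$ and $\sigma(y_\alpha, s_\alpha, a^*, b^*)/\sqrt 2$ (recall $\mathcal A^{(x,t)}_{a^*,b^*} = \tfrac12 \sigma\sigma^T$): the $\alpha Q$ piece yields $\tfrac{\alpha}{2}\|\sigma(x_\alpha, t_\alpha) - \sigma(y_\alpha, s_\alpha)\|_F^2$, which by the Lipschitz estimate of Section \ref{sec: pre} is bounded by $CL_p^2 \alpha(|x_\alpha - y_\alpha|^2 + |t_\alpha - s_\alpha|^2) + O(\delta)$ and vanishes in the correct limit. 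The drift and $\mu$ contributions, involving $\nu_x - \nu_y = 2\delta(x_\alpha + y_\alpha)$, are bounded by $C(m + |\mu|)\, \delta(|x_\alpha| + |y_\alpha|) = O(1)$, bounded but not vanishing with $\delta$. Finally, the zeroth-order term contributes $(r+K)\bigl[\underline{u}_m(x_\alpha, t_\alpha) - \ol{u}_m(y_\alpha, s_\alpha)\bigr] \to (r+K)M_0 > 0$; fixing $K$ at the outset so that $(r+K) M_0$ exceeds the above $O(1)$ drift bound yields $0 \geq (r+K) M_0 - C - o(1) > 0$, a contradiction.

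The main obstacle is the tension between the need for a coercive barrier (forced by the linear-growth regime, hence quadratic) and the unbounded gradient of such a barrier: the resulting drift error is bounded but does not vanish with $\delta$. The exponential reduction is the clean way around this, giving the zeroth-order coefficient enough extra weight to dominate the $O(1)$ error.
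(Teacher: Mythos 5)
The paper does not actually prove Lemma \ref{uniqueness}; it defers to \cite{buckdahnl08} and \cite{gigagis91}. Your skeleton — doubling of variables with a quadratic localization term, the parabolic theorem of sums, $\eta$-optimal selections to compare the two $\inf\sup$'s one-sidedly, and testing the matrix inequality against the Gram matrix built from $\sigma(x_\alpha,t_\alpha,\cdot)$ and $\sigma(y_\alpha,s_\alpha,\cdot)$ to produce $\tfrac{\alpha}{2}\|\sigma_x-\sigma_y\|_F^2$ — is the standard route and is essentially what the paper itself carries out for the limit equation in Theorem \ref{comparison limit eq}. (Two small slips: the test matrix must be the full Gram matrix of the \emph{stacked} $\sigma$'s, with nonzero off-diagonal blocks $\sigma_x\sigma_y^T$; a genuinely block-diagonal $B$ gives no cancellation of the $\alpha$-term. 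And you rule out $t_\alpha=T$, $s_\alpha=T$ but not $t_\alpha=0$, $s_\alpha=0$, where the interior theorem of sums does not apply; one must either extend the viscosity inequalities to the time endpoint opposite the data or add a barrier $\gamma t^{-1}$ as in \eqref{infty barrier}.)

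The genuine gap is your treatment of the drift error $C(m+|\mu|)\,\delta(|x_\alpha|+|y_\alpha|)$. You correctly note that the crude bound $|x_\alpha|,|y_\alpha|\le C/\delta$ only makes this $O(1)$, and you propose to dominate it by $(r+K)M_0$ after the substitution $\tilde u=e^{-K(T-t)}u$. But the $M_0$ appearing in the final inequality is $\sup\big(e^{-K(T-t)}(\underline u_m-\ol u_m)\big)$, which depends on $K$ and can be as small as $e^{-KT}\sup(\underline u_m-\ol u_m)$ (for instance if $\underline u_m-\ol u_m$ is positive only near $t=0$), whereas the constant bounding the drift error depends only on $m$, $\mu$ and the $K$-independent linear growth constant. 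Hence $(r+K)M_0(K)$ can tend to $0$ as $K\to\infty$ and no admissible $K$ need exist; weighting by $e^{Kt}$ instead inflates the growth constant, and with it the drift bound, by $e^{KT}$, and fails for the same reason. The error must instead be shown to \emph{vanish}: since $\delta\mapsto M_{\alpha,\delta}$ is monotone and bounded, comparing $M_{\alpha,\delta}$ with $M_{\alpha,\delta/2}$ gives $\delta(|x_\alpha|^2+|y_\alpha|^2)\to0$ as $\delta\to0$, hence $\delta(|x_\alpha|+|y_\alpha|)\le\sqrt{2\delta}\,\big(\delta(|x_\alpha|^2+|y_\alpha|^2)\big)^{1/2}\to0$. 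This is precisely the refinement the paper uses in the proof of Theorem \ref{comparison limit eq} (display \eqref{delta convergence}); with it, no exponential reweighting is needed beyond $r\ge0$, and the rest of your argument closes.
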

The proof of the comparison principle can be found from \cite{buckdahnl08}, see also \cite{gigagis91}. Now, Lemmas \ref{existence} and \ref{uniqueness} applied to Perron's method yield the following result.
\begin{proposition}\label{prop: unique sol}
There exists a unique viscosity solution $u_m$ to \eqref{eq: bdd 2} in the sense of Definition \ref{def: belmann-i}. 
\end{proposition}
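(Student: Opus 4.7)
The argument splits cleanly along uniqueness and existence. Uniqueness is immediate: if $u_m^1$ and $u_m^2$ are two continuous viscosity solutions to \eqref{eq: bdd 2} satisfying the linear growth bound \eqref{linear growth}, then applying Lemma \ref{uniqueness} twice, with the roles of sub- and supersolution swapped, yields $u_m^1 \leq u_m^2$ and $u_m^2 \leq u_m^1$, so $u_m^1 \equiv u_m^2$.

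For existence, the plan is to run Perron's method in the Ishii formulation, with the barriers of Lemma \ref{existence} supplying both the matching of the terminal data and the linear-growth control at spatial infinity. Write $\underline a_{y,\eps}$ and $\ol a_{y,\eps}$ for the sub- and supersolutions constructed there, and define
\[
u_m(x,t) := \sup\bigl\{w(x,t) : w \text{ is a viscosity subsolution to \eqref{eq: bdd 2} satisfying \eqref{linear growth}}\bigr\}.
\]
The family is nonempty, since $\underline a_{y,\eps}\in\mathcal{F}$ for every $y\in\R^n$ and $\eps\in(0,1)$. Comparing an arbitrary admissible $w$ against the single fixed supersolution $\ol a_{0,1}$ via Lemma \ref{uniqueness} yields a uniform linear-growth upper bound on $w$, so $u_m$ is finite and itself has linear growth. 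The classical Ishii bump argument then shows that the upper semicontinuous envelope $u_m^*$ is a viscosity subsolution on $\R^n\times(0,T)$, while the lower semicontinuous envelope $(u_m)_*$ is a viscosity supersolution.

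The terminal data is handled by a two-sided barrier squeeze: every admissible $w$ satisfies $w\leq\ol a_{x,\eps}$ everywhere on $\R^n\times[0,T]$ by comparison, so
\[
u_m^*(x,T)\leq \ol a_{x,\eps}(x,T)=g(x)+2L_g\sqrt{\eps}\xrightarrow{\eps\to 0^+}g(x),
\]
and symmetrically $(u_m)_*(x,T)\geq \underline a_{x,\eps}(x,T)\to g(x)$, whence $u_m^*(\cdot,T)\leq g\leq (u_m)_*(\cdot,T)$. Since trivially $(u_m)_*\leq u_m^*$ and both envelopes still satisfy \eqref{linear growth}, Lemma \ref{uniqueness} applied to the pair $(u_m^*,(u_m)_*)$ forces equality throughout; hence $u_m=u_m^*=(u_m)_*$ is continuous and is the desired viscosity solution.

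The main obstacle I anticipate is technical rather than conceptual: one needs the comparison principle of \cite{buckdahnl08} in its semicontinuous form in order to close the Perron step with $(u_m^*,(u_m)_*)$, although Lemma \ref{uniqueness} is stated only for continuous functions. Because the Hamiltonians $F_m^\pm$ are uniformly elliptic with bounds $\lambda,\Lambda$ from \eqref{iso lambda} that are independent of the control parameters in $\mathcal H_m$, the standard doubling-of-variables proof goes through verbatim; the bookkeeping required to reconcile this with the linear-growth condition at spatial infinity (via a quadratic penalization at infinity, as will be used again in the proof of Theorem \ref{thm uniq sol}) is where the real care is needed.
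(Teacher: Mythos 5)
Your proposal is correct and follows exactly the route the paper takes, which is stated there in one line: the barriers of Lemma \ref{existence} and the comparison principle of Lemma \ref{uniqueness} fed into Perron's method. You have simply written out the standard details, and your closing observation is the right one to flag --- the Perron step requires the comparison principle in its semicontinuous form, which is available in the cited references \cite{buckdahnl08,gigagis91} even though Lemma \ref{uniqueness} is stated for continuous functions.
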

Observe that by comparison with a sufficiently large constant, the unique solution $u_m$ to \eqref{eq: bdd 2} is not merely of linear growth \eqref{linear growth}. It is even bounded. 

\section{The SDG with uniformly bounded action sets}\label{sec: bounded controls}
In this section, we examine the game dynamics under uniform bounds on the action sets of the players. In particular, we prove that the unique solution to \eqref{eq: bdd 2} equals the lower value function of the game under the uniform bound. For the upper value function, the proof is similar.
\begin{definition}
Let $\mathcal{AC}$ be the set of admissible controls, and let $\mathcal{S}$ be the set of admissible strategies in the sense of Definitions \ref{def:admissible controls} and \ref{def:admissible strategies}, respectively. For $m\in \{1,2,\dots\}$, we set
\begin{align*}
\mathcal{AC}_m&:=\{A\in\mathcal{AC}:\Lambda(A)\leq m\}, \\
\mathcal{S}_m&:=\{S\in\mathcal{S}:\Lambda(S)\leq m\},
\end{align*}
where $\Lambda(\cdot)$ is defined in \eqref{admissible controls} and \eqref{admissible strategy}.
\end{definition}

Let $m\in \{1,2,\dots\}$, and assume that the players choose their controls and strategies from the sets $\mathcal{AC}_m$ and $\mathcal S_m$, respectively. As before, the SDE \eqref{stoc dynamics} admits a pathwise unique strong solution. We define the lower and upper value functions of the game with controls in $\mathcal{AC}_m$ and strategies in $\mathcal{S}_m$ by setting
\begin{align}
\begin{split}\label{value func bounded controls}
U_m^-(x,t)&=\inf_{S\in \mathcal{S}_m}\sup_{A\in \mathcal{AC}_m} \mathbb E \Big[e^{-r(T-t)}g\big(X(T)\big)\Big], \\
U_m^+(x,t)&=\sup_{S\in \mathcal{S}_m}\inf_{A\in \mathcal{AC}_m} \mathbb E \Big[e^{-r(T-t)}g\big(X(T)\big)\Big]
\end{split}
\end{align}
for all $(x,t)\in \mathbb R^n\times [0,T]$, where $g$ is the pay-off \eqref{g assumptions}. The game starts at $x$ at a time $t$, and the expectation $\mathbb E$ is taken with respect to the measure $\mathbb P$. 

In Lemma \ref{main lemma add reg} below, we assume that the solution $u_m$ to \eqref{eq: bdd 2} is twice differentiable and that the solution and its derivatives of first and second order are Lipschitz continuous. Hence, we first study the so called sup- and inf-convolutions of the function $u_m$. In particular, for a large $j\in \mathbb N$, let us denote $T_j:=T-j^{-1}$ and $R_j^n:=\mathbb R^n\times [j^{-1},T_j]$. Then for $j$ fixed and $\eps>0$ small, we define
$$
u_\eps(x,t)=\sup_{(z,s)\in\mathbb R^n\times[0,T]}\Big(u_m(z,s)-\frac{1}{2\eps}\big((t-s)^2+|x-z|^2\big)\Big)
$$
whenever $(x,t)\in R_j^n$. The sup-convolution $u_\eps$ has well-known properties. Indeed, $u_\eps$ is locally Lipschitz continuous, semiconvex and $u_\eps \searrow u_m$ as $\eps \to 0$, see for example \cite{grandallil92}. Moreover, $u_\eps$ yields a good approximation of $u_m$ in the viscosity sense. The proof of the following lemma follows \cite{ishii95}, where they consider an elliptic case. For the benefit of the reader, we give the proof in our parabolic setting.

\begin{lemma}\label{use ishii app}
Let $u_m$ be a viscosity solution to \eqref{eq: bdd 2}, and let $u_\eps$ be the sup-convolution of $u_m$. Then for $\eps$ small enough, it holds
\begin{align*}
F_m^-\big((x,t),u_\eps(x,t),Du_\eps(x,t),D^2u_\eps(x,t)\big)\leq \partial_tu_\eps(x,t)+\zeta(\eps)
\end{align*}
for a.e.\ $(x,t)\in R_j^n$ with a bounded modulus of continuity $\zeta(\eps)$.
\begin{proof}
By the comparison principle and the assumption \eqref{g assumptions} on $g$, it holds $0\leq u_m\leq L_g$. Therefore for all $(x,t)\in R_j^n$ and $\eps>0$ small enough, there exists a point $(x^*,t^*)\in \mathbb R^n\times ]0,T[$, where the supremum used in the definition of $u_\eps$ is obtained. In particular, it holds
\begin{align*}
0\leq u_m(x,t)\leq u_\eps(x,t)\leq L_g-\frac{1}{2\eps}\big((t-t^*)^2+|x-x^*|^2\big).
\end{align*}
Hence, this yields 
$
|t-t^*|<j^{-1},
$
if $\eps<1/(2L_gj^2)$.

By the Lipschitz continuity and the semiconvexity of $u_\eps$, it holds
\begin{align}
\begin{split}\label{jensen}
u_\eps(z,s)\leq~& u_\eps(x,t)+\partial_t u_\eps(x,t)(s-t)+\langle Du_\eps(x,t),z-x\rangle\\
&+\frac{1}{2}\big\langle D^2u_\eps(x,t)(z-x),z-x\big\rangle +  o\big(|s-t|+|z-x|^2\big)
\end{split}
\end{align}
for a.e.\ $(x,t)\in R_j^n$ as $(z,s)\to (x,t)$, see \cite[Lemmas 3.3 and 3.15]{jensen88}. Here, we also applied the fundamental Aleksandrov's theorem for convex functions, see for example \cite[Theorem 6.4.1]{evansg92}. Moreover, the estimate \eqref{jensen} implies that we can choose $(x^*,t^*)$ such that 
\begin{align}
\begin{split}\label{magic property}
x^*&=x+\eps Du_\eps(x,t),\\
t^*&=t+\eps\partial_tu_\eps(x,t)
\end{split}
\end{align}
for a.e.\ $(x,t)\in R_j^n$, see \cite[Lemma A.5]{grandallil92} or \cite[Theorem 4.7]{katzourakis15}. Let $(x,t)\in R_j^n$ such that \eqref{jensen} holds.  We define $v:R_j^n \to \mathbb R$ through
\begin{align*}
v(z,s)=&~\partial_t u_\eps(x,t)(s-t)+\langle Du_\eps(x,t),z-x\rangle \\
&+\frac{1}{2}\big\langle D^2u_\eps(x,t)(z-x),z-x\big\rangle
\end{align*}
for $(z,s)\in R_j^n$. We want to find a local maximum of a function at $(x^*,t^*,x,t)$ up to an error in order to use the parabolic theorem of sums. Because it holds $v(x,t)=0$ and 
$$
u_m(y,l)-\frac{1}{2\eps}\big((l-s)^2+|y-z|^2\big)\leq u_\eps(z,s)
$$
for all $(z,s),(y,l)\in R_j^n$, we can estimate by \eqref{jensen}
\begin{align*}
&u_m(y,l)-v(z,s)-\frac{1}{2\eps}\big((l-s)^2+|y-z|^2\big)\\
&\leq u_m(x^*,t^*)-v(x,t)-\frac{1}{2\eps}\big((t-t^*)^2+|x-x^*|^2\big)\\
&\hspace{1em}+o\big(|s-t|+|z-x|^2\big)
\end{align*}
for any $(y,l)\in R_j^n$ as $(z,s)\to (x,t)$. By using this inequality, we can deduce
\begin{align}
\begin{split}\label{eq: enough for thm of sums A}
&u_m(y,l)-v(z,s) \\
&\leq u_m(x^*,t^*)-v(x,t)+\frac{1}{\eps}\langle x^*-x,y-x^*\rangle+\frac{1}{\eps}(t^*-t)(l-t^*) \\
&\hspace{1em}+\frac{1}{\eps}\langle x-x^*,z-x\rangle+\frac{1}{\eps}(t-t^*)(s-t)+\frac{1}{2\eps}\big(|y-x^*|^2+|z-x|^2\big) \\
&\hspace{1em}-\frac{1}{\eps}\langle y-x^*,z-x\rangle+o\big(|s-t|+|l-t^*|+|z-x|^2\big)
\end{split}
\end{align}
for all $y\in \mathbb R^n$ as $(z,s,l)\to(x,t,t^*)$. This is true, because by direct calculations it holds
\begin{align*}
&\frac{1}{2\eps}\big((l-s)^2-(t-t^*)^2\big)\\
&=\frac{1}{2\eps}\Big(\big(t-s+l-t^*\big)^2-2(t^*-t)^2+2(t^*-t)(l-s)\Big) \\
&\leq\frac1\eps(t^*-t)(l-t^*)+\frac1\eps(t-t^*)(s-t)+o\big(|s-t|+|l-t^*|\big)
\end{align*}
as $(s,l)\to(t,t^*)$ and
\begin{align*}
&\langle x^*-x,y-x^*\rangle+\langle x-x^*,z-x\rangle+\frac{1}{2}\big(|y-x^*|^2+|z-x|^2\big)-\langle y-x^*,z-x\rangle \\
&=\frac{1}{2}\big(|y-z|^2+|x-x^*|^2\big)
\end{align*}
for all $y,z\in \mathbb R^n$.

For the following notation and use of the parabolic theorem of sums, we refer the reader to \cite{grandallil92}, see also \cite{katzourakis15}. By the estimate \eqref{eq: enough for thm of sums A}, it holds
\begin{align*}
&\bigg(\frac1\eps(x^*-x),\frac1\eps(t^*-t),\frac1\eps(x-x^*),\frac1\eps(t-t^*),\frac1\eps\begin{bmatrix}
    I & -I \\
    -I& I
  \end{bmatrix}\bigg)\\
&\in\mathcal P^{2,+}\Big(u_m(x^*,t^*)-v(x,t)\Big).
\end{align*}
Thus by \cite[Theorem 6.7]{katzourakis15}, there exist symmetric matrices $Y:=Y(\eps)$ and $Z:=Z(\eps)$ such that
\begin{align*}
&\bigg(\frac{1}{\eps}(t^*-t),\frac{1}{\eps}(x^*-x),Y\bigg)\in \mathcal{\ol{P}}^{2,+}u_m(x^*,t^*) \\
&\bigg(\frac{1}{\eps}(t^*-t),\frac{1}{\eps}(x^*-x),Z\bigg)\in \mathcal{\ol{P}}^{2,-}v(x,t) 
\end{align*}
and
\begin{align}\label{thm of sums1}
\begin{bmatrix}
    Y & 0 \\
   0&-Z
  \end{bmatrix}\leq \frac{3}{\eps}  \begin{bmatrix}
    I & -I\\
    -I& I
  \end{bmatrix}.
\end{align}
Therefore, because $u_m$ is a subsolution, this and \eqref{magic property} yield
\begin{align}\label{eq: estimate I1}
F_m^-\big((x^*,t^*),u_m(x^*,t^*),Du_\eps(x,t),Y\big)\leq \partial_tu_\eps(x,t).
\end{align}
Furthermore, since $D^2v(x,t)=D^2u_\eps(x,t)$, the degenerate ellipticity of $F_m^-$ implies 
\begin{align*}
&F_m^-\big((x,t),u_\eps(x,t),Du_\eps(x,t),D^2u_\eps(x,t)\big) \\
&\leq F_m^-\big((x,t),u_\eps(x,t),Du_\eps(x,t),Z\big).
\end{align*}
By combining this and \eqref{eq: estimate I1}, the proof is complete, if we can show that there exists a modulus $\zeta$ such that 
\begin{align}
\begin{split}\label{final part to show approx}
&F_m^-\big((x,t),u_\eps(x,t),Du_\eps(x,t),Z\big) \\
&\leq F_m^-\big((x^*,t^*),u_m(x^*,t^*),Du_\eps(x,t),Y\big)+ \zeta(\eps).
\end{split}
\end{align}
We prove this inequality by utilizing \eqref{thm of sums1}.

Let $a,b\in \mathbb S^{n-1}$. We multiply from the left both sides in \eqref{thm of sums1} by
$$
\begin{bmatrix}
\mathcal A_{a,b}^{(x^*,t^*)} &\mathcal A_{a,b}^{(x,t),(x^*,t^*)} \\
\mathcal A_{a,b}^{(x,t),(x^*,t^*)}&\mathcal A_{a,b}^{(x,t)}
\end{bmatrix},
$$
where
$$
\mathcal A_{a,b}^{(x,t),(x^*,t^*)}:=\frac{1}{2}\Big(\sqrt{p(x^*,t^*)-1}\sqrt{p(x,t)-1}-1\Big)(a\otimes a+ b\otimes b)+I,
$$
and the matrices $\mathcal A_{a,b}^{(x,t)}$ and $\mathcal A_{a,b}^{(x^*,t^*)}$ are defined in \eqref{alpha matrix}.
Then by taking traces and observing
$$
\tr(a\otimes a+b\otimes b)=2,
$$
we get
\begin{align}
\begin{split}\label{tr estimate A}
&-\tr\big(\mathcal A_{a,b}^{(x,t)}Z\big)+\tr\big(\mathcal A_{a,b}^{(x^*,t^*)}Y\big) \\
&\leq\frac{3}{\eps}\bigg(\tr\big(\mathcal A_{a,b}^{(x^*,t^*)}+\mathcal A_{a,b}^{(x,t)}\big)-2\tr\big(\mathcal A_{a,b}^{(x,t),(x^*,t^*)}\big)\bigg) \\
&=\frac{3}{\eps}\Big(\sqrt{p(x,t)-1}-\sqrt{p(x^*,t^*)-1}\Big)^2.
\end{split}
\end{align}
Because it holds $p_{\text{min}}>1$ and 
$$
\sqrt f-\sqrt h=\frac{(\sqrt f+\sqrt h)(\sqrt f-\sqrt h)}{\sqrt f+\sqrt h}=\frac{f-h}{\sqrt f+\sqrt h}
$$
for any $f,h>0$, we can estimate
\begin{align*}
&\frac{3}{\eps}\Big(\sqrt{p(x,t)-1}-\sqrt{p(x^*,t^*)-1}\Big)^2 \leq \frac{3L_p^2}{2(p_{\text{min}}-1)}\cdot\frac{1}{2\eps}\big((t-t^*)^2+|x-x^*|^2\big)
\end{align*}
with $L_p$ denoting the Lipschitz constant of $p$. Therefore, because $\mathcal H_m$ is compact, $\Phi$ is continuous with respect to the variables in $\mathcal{CS}$ and $a,b$ are arbitrary, this and \eqref{tr estimate A} imply 
\begin{align*}
&F_m^-\big((x,t),u_\eps(x,t),Du_\eps(x,t),Z\big)- F_m^-\big((x^*,t^*),u_m(x^*,t^*),Du_\eps(x,t),Y\big) \\
&\leq\frac{3L_p^2}{2(p_{\text{min}}-1)}\cdot\frac{1}{2\eps}\big((t-t^*)^2+|x-x^*|^2\big).
\end{align*}
The solution $u_m$ is H\"{o}lder continuous, see Lemma \ref{lemma: equicontinuity} below. In particular, there exists a modulus $\zeta_u$, independent of $m$, such that
\begin{align*}
\frac{1}{2\eps}\big((t-t^*)^2+|x-x^*|^2\big) \leq u_m(x^*,t^*)-u_m(x,t)\leq \zeta_u\big(\sqrt{2L_g\eps}\big).
\end{align*}
Thus by denoting 
$$\zeta(\eps):=\frac{3L_p^2}{2(p_{\text{min}}-1)}\zeta_u\big(\sqrt{2L_g\eps}\big)$$ 
and recalling \eqref{final part to show approx}, the proof is complete.
\end{proof}
\end{lemma}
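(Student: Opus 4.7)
The plan is to exploit that the sup-convolution $u_\eps$ is almost everywhere twice differentiable (being locally Lipschitz and semiconvex, so Aleksandrov's theorem applies) and, at such a point $(x,t) \in R_j^n$, to transfer the viscosity subsolution inequality for $u_m$ at the maximizer $(x^*,t^*)$ back to $(x,t)$, paying an $\eps$-dependent error for the $(x,t)$-dependence of $F_m^-$ through $p$. First I would record the standard facts: because $u_m$ is bounded (by comparison with a large constant, as noted after Proposition \ref{prop: unique sol}), for $\eps$ small enough the supremum defining $u_\eps(x,t)$ is attained at an interior point $(x^*,t^*)$ with $|t-t^*|+|x-x^*| \to 0$ as $\eps \to 0$, and the first-order condition at a twice-differentiable point yields $x^* = x + \eps Du_\eps(x,t)$ and $t^* = t + \eps \partial_t u_\eps(x,t)$.

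Next I would set up the parabolic theorem of sums. With the Taylor polynomial
\[
v(z,s) = \partial_t u_\eps(x,t)(s-t) + \langle Du_\eps(x,t), z-x\rangle + \tfrac12 \langle D^2 u_\eps(x,t)(z-x), z-x\rangle,
\]
the sup-convolution inequality combined with the a.e.\ Taylor expansion of $u_\eps$ at $(x,t)$ shows that the map $(y,l,z,s) \mapsto u_m(y,l) - v(z,s) - \tfrac{1}{2\eps}((l-s)^2 + |y-z|^2)$ has a maximum at $(x^*,t^*,x,t)$ up to an $o(|s-t| + |l-t^*| + |z-x|^2)$ error. The Crandall-Ishii-Lions theorem of sums then produces symmetric matrices $Y,Z$ with $Z \geq D^2 u_\eps(x,t)$, a subjet $(\tfrac{t^*-t}{\eps}, \tfrac{x^*-x}{\eps}, Y) \in \overline{\mathcal P}^{2,+} u_m(x^*,t^*)$, and the matrix bound
\[
\begin{pmatrix} Y & 0 \\ 0 & -Z \end{pmatrix} \leq \frac{3}{\eps} \begin{pmatrix} I & -I \\ -I & I \end{pmatrix}.
\]
Feeding this subjet into the viscosity subsolution inequality for $u_m$ and invoking degenerate ellipticity in the matrix slot (which allows replacing $D^2 u_\eps(x,t)$ by $Z$ in $F_m^-$) reduces the lemma to estimating
\[
F_m^-\big((x,t), u_\eps(x,t), Du_\eps(x,t), Z\big) - F_m^-\big((x^*,t^*), u_m(x^*,t^*), Du_\eps(x,t), Y\big).
\]

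Only the trace term in $F_m^-$, through $\mathcal A_{a,b}^{(\cdot)}$, carries $(x,t)$-dependence. Testing the matrix bound above against the block matrix with diagonal entries $\mathcal A_{a,b}^{(x^*,t^*)}$ and $\mathcal A_{a,b}^{(x,t)}$, taking traces and using $\tr(a\otimes a + b\otimes b)=2$, I would obtain an error bound uniform in $a,b$ of the form $\tfrac{C}{\eps}\big(\sqrt{p(x,t)-1} - \sqrt{p(x^*,t^*)-1}\big)^2$. Since $p_{\text{min}} > 1$, the map $p\mapsto\sqrt{p-1}$ is Lipschitz on $[p_{\text{min}}, p_{\text{max}}]$, and together with the Lipschitz continuity of $p$ this is controlled by a constant multiple of $\tfrac{1}{\eps}((t-t^*)^2 + |x-x^*|^2)$. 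Applying the sup-convolution inequality at $(y,l) = (x,t)$ bounds the latter by $2(u_m(x^*,t^*) - u_m(x,t))$, which in turn is controlled by a modulus $\zeta_u$ of continuity for $u_m$ (via Lemma \ref{lemma: equicontinuity}) evaluated at $O(\sqrt{\eps})$; the $ru$-term in $F_m^-$ contributes an $r$-multiple of the same modulus, giving the desired $\zeta(\eps)$.

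The main obstacle I expect is the correct application of the parabolic theorem of sums in this four-variable maximum setting: the auxiliary $v$ must absorb the time-derivative information from $u_\eps$ so as to leave a clean pure-spatial coupling $\tfrac{1}{2\eps}|y-z|^2$ for Crandall-Ishii-Lions to handle, and the final estimate closes only because $\sqrt{p-1}$ is Lipschitz (needing $p_{\text{min}} > 1$) and $|x-x^*|^2 + (t-t^*)^2$ is genuinely of order $\eps\,\zeta_u(\sqrt{\eps})$ through the sup-convolution bound; without this quadratic smallness the $1/\eps$ factor from the matrix inequality would not be absorbed.
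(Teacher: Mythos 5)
Your proposal is correct and follows essentially the same route as the paper's proof: Aleksandrov differentiability of the sup-convolution, the first-order relation $x^*=x+\eps Du_\eps(x,t)$, $t^*=t+\eps\partial_t u_\eps(x,t)$, the four-variable maximum with the Taylor polynomial $v$ feeding into the parabolic theorem of sums, the subsolution inequality at $(x^*,t^*)$ combined with degenerate ellipticity, and finally the trace test of the matrix inequality (where the off-diagonal blocks built from $\sqrt{p(x,t)-1}\sqrt{p(x^*,t^*)-1}$ produce the perfect square) controlled via the Lipschitz continuity of $\sqrt{p-1}$ and the H\"older modulus of $u_m$. No substantive differences from the paper's argument.
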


We prove the following main lemma of this section.
\begin{lemma}\label{lemma: game value}
Let $u_m$ be the unique viscosity solution to the equation \eqref{eq: bdd 2}. Furthermore, let $U_m^-$ be the lower value function of the game defined in \eqref{value func bounded controls}. Then, it holds
\begin{align*}
u_m(x,t)&=U_m^-(x,t) 
\end{align*}
for all $(x,t)\in \mathbb R^n \times [0,T]$.
\end{lemma}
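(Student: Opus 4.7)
The plan is to establish both $u_m \leq U_m^-$ and $u_m \geq U_m^-$ by applying Ito's formula to regularizations of $u_m$---its sup-convolution $u_\eps$ and inf-convolution $\underline u_\eps$---combined with measurable selection of near-optimal strategies and controls, then passing to the limit $\eps\to 0$. Both $u_\eps$ and $\underline u_\eps$ are locally Lipschitz, semiconvex / semiconcave respectively, twice differentiable a.e.~by Aleksandrov's theorem, and sandwich $u_m$ with $\underline u_\eps \leq u_m \leq u_\eps$, converging monotonically to $u_m$ as $\eps\to 0$; by Lemma~\ref{use ishii app} and a direct symmetric argument they satisfy $F_m^- \leq \partial_t u_\eps + \zeta(\eps)$ and $F_m^- \geq \partial_t \underline u_\eps - \zeta(\eps)$ a.e. Moreover, the Lipschitz continuity of $u_m$ (obtained from Lemma~\ref{lemma: equicontinuity}) together with the definition of the convolutions yields $0\le u_\eps(x,T)-g(x)\le c\eps$ and $0\le g(x)-\underline u_\eps(x,T)\le c\eps$ uniformly in $x$.

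A standard mollification justifies Ito's formula applied to $e^{-r(s-t)} v(X(s), s)$ for $v \in \{u_\eps, \underline u_\eps\}$; using $\tfrac{1}{2}\sigma\sigma^T = \mathcal A_{a, b}^{(X(s), s)}$ and $\rho \cdot Dv + \tfrac{1}{2}\tr(\sigma\sigma^T D^2 v) = -\Phi(G(s);\,X(s),s,Dv,D^2v)$,
\begin{align*}
\mathbb E\big[e^{-r(T-t)} v(X(T), T)\big] = v(x, t) + \mathbb E \int_t^T e^{-r(s-t)} \big[\partial_t v - rv - \Phi(G(s);X(s),s,Dv,D^2v)\big]\, ds.
\end{align*}

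For $U_m^- \geq u_m$, take $v = u_\eps$ and, given an arbitrary $S \in \mathcal S_m$, construct via time-discretization of $[t, T]$ a piecewise-constant, progressively measurable control $A^{\eps, \delta} \in \mathcal{AC}_m$ for player 2 such that $\Phi(S(A^{\eps, \delta})(s), A^{\eps, \delta}(s); X(s), s, Du_\eps, D^2 u_\eps) \leq \inf_{(a, c)}\sup_{(b, d)}\Phi(\cdot) + \delta$ a.s.; on each subinterval $[t_i,t_{i+1}]$ the constant $A_i\in\mathcal H_m$ is obtained from a measurable selector of the near-minimizer, using compactness of $\mathcal H_m$ and causal knowledge of $S$. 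The approximate subsolution inequality for $u_\eps$ then yields the Ito integrand $\geq -(\delta + \zeta(\eps))$ a.s., whence $\mathbb E[e^{-r(T-t)} u_\eps(X(T), T)] \geq u_\eps(x, t) - T(\delta + \zeta(\eps))$; combining with $u_\eps(\cdot, T) \leq g + c\eps$ gives $\mathbb E[e^{-r(T-t)} g(X(T))] \geq u_m(x, t) - \text{error}$, so that $\sup_A \mathbb E[\cdot] \geq u_m(x, t)$ upon letting the mesh, $\delta$ and $\eps$ vanish, for every $S$. Symmetrically, for $U_m^- \leq u_m$ I take $v = \underline u_\eps$ and define a Markov strategy $S_\eps \in \mathcal S_m$ for player 1 by measurable selection on $\mathcal H_m$ so that, for every $A \in \mathcal{AC}_m$, $\Phi(S_\eps(A)(s), A(s); X(s),s,D\underline u_\eps,D^2\underline u_\eps) \geq \inf\sup\Phi - \delta$ a.s.; the approximate supersolution inequality, combined with $g - \underline u_\eps(\cdot, T) \leq c\eps$, then yields $\sup_A \mathbb E[e^{-r(T-t)} g(X(T))] \leq u_m(x, t) + \text{error}$ for this $S_\eps$, whence $U_m^- \leq u_m$.

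The main obstacle will be the construction of the control $A^{\eps, \delta}$ in the first direction as a progressively measurable process adapted to the Brownian filtration: the response $S(A^{\eps, \delta})(s)$ depends causally on the past of $A^{\eps, \delta}$ itself, creating a fixed-point flavor on each subinterval that prevents a naive pointwise selection. This is resolved by the time-discretization scheme on $[t, T]$---committing player 2 to piecewise-constant selections computed from $(X(t_i), t_i)$ and the known strategy $S$ on $[0,t_i]$---together with the compactness of $\mathcal H_m$ and continuity of $\Phi$ in its arguments (and of the a.e.~defined $Du_\eps, D^2 u_\eps$), ensuring admissibility and vanishing discretization error in the limit.
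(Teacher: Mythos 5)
Your overall architecture matches the paper's: sup-/inf-convolution, Lemma \ref{use ishii app} and its mirror image, mollification, It\^o's formula combined with a time-discretized measurable selection of near-optimal controls/strategies on each subinterval, and a limit in the mesh, $\delta$ and $\eps$. The causality issue you flag at the end, and your resolution via piecewise-constant controls computed from $(X(t_i),t_i)$, is exactly the content of the paper's Lemma \ref{main lemma add reg}.

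There is, however, one genuine gap: the sentence ``a standard mollification justifies It\^o's formula applied to $e^{-r(s-t)}v(X(s),s)$ for $v\in\{u_\eps,\underline u_\eps\}$'' hides the crux of the argument. The convolutions are only twice differentiable a.e.\ (Aleksandrov), the approximate inequality $F_m^-\leq \partial_t u_\eps+\zeta(\eps)$ from Lemma \ref{use ishii app} holds only a.e., and after mollifying the residual error $\gamma_\delta:=\big(F_m^-(\cdot,u_\eps^\delta,Du_\eps^\delta,D^2u_\eps^\delta)-\partial_tu_\eps^\delta-\zeta(\eps)\big)_+$ converges to $0$ only a.e.\ as $\delta\to 0$. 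To conclude that $\mathbb E\int_t^T e^{-r(l-t)}\gamma_\delta\big(X(l),l\big)\,dl\to 0$ you must rule out that the process spends positive expected time in the exceptional (small Lebesgue measure) set where $\gamma_\delta$ is not small. This is not automatic for a general diffusion; it holds here because $\sigma\sigma^T=2\mathcal A^{(x,t)}_{a,b}$ is uniformly nondegenerate, and the quantitative statement needed is the Krylov occupation-time estimate $\mathbb E\int_t^{T}\chi_{U}\big(X(l),l\big)\,dl\leq C|U|$ (the paper invokes \cite[Theorem 3.4]{krylov80}, combined with Doob's inequality to localize and Egorov's theorem to split off a set $U_\theta$ of small measure). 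Without this input your passage from the a.e.\ inequality to an inequality along the paths of $X$ does not close; you should state and use it explicitly. Two minor further points: Lemma \ref{lemma: equicontinuity} gives H\"older, not Lipschitz, continuity of $u_m$ (uniformly in $m$), so your terminal estimates should read $0\leq u_\eps(x,T)-g(x)\leq \zeta(\sqrt{\eps})$ for a modulus $\zeta$ rather than $c\eps$; and the sup-convolution's maximizing point $(x^*,t^*)$ must be kept away from $t=T$ (and $t=0$) for the viscosity inequality to be applicable there, which is why the paper works on $\mathbb R^n\times[j^{-1},T-j^{-1}]$ and removes the truncation at the very end via the barrier of Lemma \ref{existence}.
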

\begin{proof}
To establish the result, we regularize the solution $u_m$ first by the sup-convolution and then by the standard mollification. Then, we apply Lemma \ref{main lemma add reg} below to the regularized function and finally pass to the limits.

Fix a large $j\in \mathbb N$ and a small $\eps>0$. By Lemma \ref{use ishii app}, it holds
\begin{align}\label{eq: ishii}
F_m^-\big((x,t),u_\eps(x,t),Du_\eps(x,t),D^2u_\eps(x,t)\big)\leq \partial_tu_\eps(x,t)+\zeta(\eps)
\end{align}
for a.e.\ $(x,t)\in R_j^n$ with a bounded modulus of continuity $\zeta(\eps)$. Let $\delta>0$ be small, and denote by $\phi_\delta$ the standard mollifier in $\mathbb R^{n+1}$. Then for $\delta$ small enough, the function $u_\eps^\delta:=\phi_\delta*u_\eps$ is well defined on $R_{j-1}^n$. Because $u_\eps$ is bounded, the mollification ensures that $u_\eps^\delta$ is bounded uniformly in $\delta$, and $u_\eps^\delta$ is Lipschitz continuous. Moreover, $u_\eps^\delta$ is smooth, and $Du_\eps^\delta, \partial_t u_\eps^\delta$ and $D^2u_\eps^\delta$ are bounded and Lipschitz continuous on $R_{j-1}^n$. In addition, because $u_\eps$ is continuous on $R_j^n$, it holds that $u_\eps^\delta \to u_\eps$ uniformly as $\delta \to 0$ on $R_{j-1}^n$. We can also show that it holds
\begin{align*}
Du_\eps^\delta(x,t)&\to Du_\eps(x,t), \\
\partial_t u_\eps^\delta(x,t) &\to \partial_t u_\eps(x,t),\\
D^2u_\eps^\delta(x,t) &\to D^2u_\eps(x,t)
\end{align*}
as $\delta \to 0$ for a.e.\ $(x,t)\in R^n_{j-1}$, see for example \cite{evansg92}. Furthermore, we have
$$
F_m^-\big((x,t),u_\eps^\delta(x,t),Du^\delta_\eps(x,t),D^2u^\delta_\eps(x,t)\big)\leq \partial_tu^\delta_\eps(x,t)+\zeta(\eps)+\gamma_\delta(x,t)
$$
for all $(x,t)\in R_{j-1}^n$, where it holds 
\begin{align*}
\gamma_\delta(x,t)&:=\max\Big\{F_m^-\big((x,t),u_\eps^\delta(x,t),Du^\delta_\eps(x,t),D^2u^\delta_\eps(x,t)\big)-\partial_tu^\delta_\eps(x,t),\zeta(\eps)\Big\}\\
&\hspace{1.5em}-\zeta(\eps).
\end{align*}
By using the convergences above and \eqref{eq: ishii}, we see $\gamma_\delta\to 0$ as $\delta\to0$ for a.e.\ on $R_{j-1}^n$. It also holds that $\gamma_\delta$ is uniformly continuous on $R_{j-1}^n$ and bounded from above uniformly with respect to $\delta$. This is true, because the operator $F_m^-$ and the variables are uniformly continuous, and $u_\eps^\delta$ is uniformly Lipschitz and semiconvex with respect to $\delta$. Now by doing minor adjustments to the proof of Lemma \ref{main lemma add reg} below, we can argue that
\begin{align}
\begin{split}\label{eq: estimate A1}
u_\eps^\delta(x,t)\leq \inf_{S\in \mathcal S_m}\sup_{A\in \mathcal{AC}_m}\mathbb E\bigg[&\int_t^{T_{j-1}}e^{-r(l-t)}h_\eps^\delta\big(X(l),l\big)\ud l\\
&+e^{-r(T_{j-1}-t)}u_\eps^\delta\big(X(T_{j-1}),T_{j-1}\big)\bigg]
\end{split}
\end{align}
for all $(x,t)\in R_{j-1}^n$ with $h_\eps^\delta:=\zeta(\eps)+\gamma_\delta$ and $\eps$ small enough. This is true, because $h_\eps^\delta$ is uniformly continuous. 

Next, for $j$ fixed, we let $\delta \to 0$ and $\eps \to 0$. First, we make a rough estimate for the drift part and apply Doob's martingale inequality for the diffusion part of the process $\big(X(l)\big)$ to get the following. For all $\theta>0$, we choose $R:=R(\theta,m,\mu,n,p_{\text{max}},T)>0$, independent of controls and strategies, large enough such that
$$
\mathbb P\Big(\sup_{t\leq l \leq T}\big|X(l)-x\big|\geq R\Big)\leq \theta,
$$ 
see for example \cite[Theorem 2.7.2.2]{evans13}. Then by Egorov's theorem, we find a set $U_\theta\subset B_R(x) \times  [0,T]$ such that $|U_\theta|\leq \theta$ and 
\begin{align}\label{gamma to zero}
\gamma_\delta\to 0\text{ uniformly as }\delta \to 0\text{ on }\big(B_R(x)\times [(j-1)^{-1},T_{j-1}]\big)\setminus U_\theta. 
\end{align}
Now, we estimate
\begin{align}
\begin{split}\label{eq: estimate A2}
\mathbb E\int_t^{T_{j-1}}e^{-r(l-t)}h_\eps^\delta\big(X(l),l\big)\ud l\leq ~&I_1^{\eps,\delta}(\theta)+I_2^{\eps,\delta}(\theta)\\
&+\big(C_\gamma+\zeta(\eps)\big)(T_{j-1}-t)\theta,
\end{split}
\end{align}
where we denoted by $C_\gamma<\infty$ a constant such that $\sup_{R^n_{j-1}} \gamma_\delta <C_\gamma$ and
\begin{align*}
I_1^{\eps,\delta}(\theta)&:=\mathbb E\int_t^{T_{j-1}}e^{-r(l-t)}h_\eps^\delta\big(X(l),l\big)\chi_{U_\theta}\big(X(l),l\big) \ud l, \\
I_2^{\eps,\delta}(\theta)&:=\mathbb E\int_t^{T_{j-1}}e^{-r(l-t)}h_\eps^\delta\big(X(l),l\big)\chi_{\big(B_R(x)\times [t,T_{j-1}]\big)\setminus U_\theta}\big(X(l),l\big) \ud l.
\end{align*}
By a fundamental estimate in \cite[Theorem 3.4]{krylov80}, see also \cite{krylovs79}, it holds
$$
\mathbb E\int_t^{T_{j-1}}\Big[e^{-r(l-t)}\chi_{U_\theta}\big(X(l),l\big)\Big]\ud l\leq C(T_{j-1}-t)|U_\theta|
$$
for a constant $C:=C(n,p_{\text{min}},p_{\text{max}},m,\mu,r)<\infty$. Hence, we have
\begin{align}\label{est I2}
I_1^{\eps,\delta}(\theta)\leq C(T_{j-1}-t)\theta\big( C_\gamma+\zeta(\eps)\big).
\end{align}
Furthermore, because we have \eqref{gamma to zero} and $\zeta(\eps)\to 0$  as $\eps \to 0$, it holds $I_2^{\eps,\delta}(\theta)\to 0$  by first letting $\delta \to 0$ and then $\eps \to 0$.

Combining this together with the estimates \eqref{eq: estimate A1}, \eqref{eq: estimate A2} and \eqref{est I2}, and letting $\delta,\theta,\eps \to 0$, we have proven 
\begin{align*}
u_m(x,t)\leq\inf_{S\in \mathcal S_m}\sup_{A\in \mathcal{AC}_m}\mathbb E \Big[e^{-r(T_{j-1}-t)}u_m\big(X(T_{j-1}),T_{j-1}\big)\Big]
\end{align*}
for all $(x,t)\in R_{j-1}^n$. Finally by recalling $T_{j-1}=T-(j-1)^{-1}$ and letting $j\to \infty$, we see by utilizing the barrier constructed in Lemma \ref{existence} that
\begin{align}\label{final result A1}
u_m(x,t)\leq\inf_{S\in \mathcal S_m}\sup_{A\in \mathcal{AC}_m}\mathbb E \Big[e^{-r(T-t)}g\big(X(T)\big)\Big].
\end{align}
Here, we also applied Jensen's inequality, Ito's isometry and \eqref{comp stoc dynamics} to get
\begin{align*}
&\mathbb E\Big(|X(T_{j-1})-X(T)|^2+j^{-1}\Big)^{1/2} \leq \Big(\mathbb E|X(T_{j-1})-X(T)|^2+j^{-1}\Big)^{1/2} \\
&\leq \Big(C(j-1)^{-1}+j^{-1}\Big)^{1/2}
\end{align*}
with a constant $C:=C(m,\mu,n,p_{\text{max}})<\infty$ to estimate terms in the barrier.

The proof of the opposite inequality in \eqref{final result A1} is analogous. In particular, we first apply the inf-convolution 
$$
\tilde{u}_\eps(x,t)=\inf_{(z,s)\in\mathbb R^n\times[0,T]}\Big(u_m(z,s)+\frac{1}{2\eps}\big((t-s)^2+|x-z|^2\big)\Big)
$$
whenever $(x,t)\in R_j^n$, and deduce an opposite type of inequality similar to \eqref{eq: ishii} with the same modulus of continuity $\zeta$. Then, we make the standard mollification, and deduce the result by passing to the limits as before. Therefore, the proof is complete. 
\end{proof}
In the result above, we utilized the following two lemmas.
\begin{lemma}\label{lemma: phi cont}
Let $u:\mathbb R^n \to \mathbb R$ be twice differentiable, and let $a,b\in \mathbb S^{n-1}$ and $c,d\in[0,m]$ with $m\in \mathbb N$. Furthermore, assume that $Du$ and $D^2 u$ are Lipschitz continuous, and $D^2u$ is bounded. Then, the function
$$
(x,t) \mapsto \Phi\big(a,b,c,d;(x,t),Du(x,t),D^2u(x,t)\big)
$$
is also Lipschitz continuous.
\begin{proof}
By a direct computation, it holds
\begin{align}
\begin{split}\label{eq: estimate U1}
&\big\langle (c+d)(a+b)+\mu,Du(x,t)-Du(z,s)\big\rangle\\
&\hspace{1em}+\tr\Big[D^2u(x,t)-D^2u(z,s)\Big] \\
&\leq L\big(|x-z|^2+(t-s)^2\big)^{1/2}
\end{split}
\end{align}
for all $(x,t),(z,s)\in\mathbb R^n\times [0,T]$ and for a constant $L:=L(m,\mu,n,L_1,L_2)$ with $L_1$ denoting the Lipschitz constant of $Du$ and $L_2$ denoting the Lipschitz constant of $D^2u$, respectively. Furthermore, because $D^2u$ is bounded, we have
$$
C_0:=\sup_{(z,l)\in\mathbb R^n\times [0,T]}\big|\big|D^2u(z,l)\big|\big|<\infty.
$$
Therefore, we can estimate
\begin{align*}
&\big(p(x,t)-2\big)\tr\Big( (a \otimes a+b\otimes b) D^2u(x,t)\Big)\\
&\hspace{1em}-\big(p(z,l)-2\big)\tr\Big( (a \otimes a+b\otimes b) D^2u(z,l)\Big) \\
&=\big(p(x,t)-2\big)\tr\Big( (a \otimes a+b\otimes b)\big( D^2u(x,t)-D^2u(z,l)\big)\Big) \\
&\hspace{15pt}+\big(p(x,t)-p(z,l)\big)\tr\Big( (a \otimes a+b\otimes b) D^2u(z,l)\Big) \\
&\leq \tilde{L}\big(|x-z|^2+(t-s)^2\big)^{1/2}
\end{align*}
for all $(x,t),(z,s)\in\mathbb R^n\times [0,T]$ and for a constant $\tilde{L}:=(p_{\text{max}},n,L_2,L_p,C_0)$ with $L_p$ denoting the Lipschitz constant of $p$. Thus, this estimate, together with the estimate \eqref{eq: estimate U1}, completes the proof. 

\end{proof}
\end{lemma}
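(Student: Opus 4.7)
The plan is to split $\Phi(a,b,c,d;(x,t),Du(x,t),D^2u(x,t))$ according to the definition into four pieces and estimate each one separately. Expanding $\mathcal{A}_{a,b}^{(x,t)}$ and using linearity of the trace, the function in question equals
\begin{align*}
&-\tr\bigl(D^2u(x,t)\bigr)-\tfrac{1}{2}\bigl(p(x,t)-2\bigr)\tr\bigl((a\otimes a+b\otimes b)\,D^2u(x,t)\bigr)\\
&\hspace{1em}-(c+d)\langle a+b,Du(x,t)\rangle-\langle\mu,Du(x,t)\rangle.
\end{align*}
Three of these four pieces are Lipschitz in $(x,t)$ for essentially routine reasons: the trace is a bounded linear functional on symmetric matrices, so the first piece inherits Lipschitz continuity from $D^2u$, while the last two are linear forms in $Du(x,t)$ with coefficients bounded by $(c+d)|a+b|\leq 4m$ and by $|\mu|$, respectively, and $Du$ is Lipschitz by assumption.

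The nontrivial piece is the product $f(x,t)g(x,t)$ with $f(x,t):=p(x,t)-2$ and $g(x,t):=\tr\bigl((a\otimes a+b\otimes b)\,D^2u(x,t)\bigr)$. I would apply the telescoping identity
\[
fg(x,t)-fg(z,s)=f(x,t)\bigl(g(x,t)-g(z,s)\bigr)+g(z,s)\bigl(f(x,t)-f(z,s)\bigr).
\]
Since $|p-2|\leq p_{\max}+2$ and $\|a\otimes a+b\otimes b\|\leq 2$, the first term on the right is controlled by a constant multiple of the Lipschitz constant $L_2$ of $D^2u$ times $|(x,t)-(z,s)|$. For the second term, the key point is that $D^2u$ is bounded, which is precisely the role of that hypothesis: bounding $|g(z,s)|\leq 2nC_0$ with $C_0:=\sup\|D^2u\|$ and invoking Lipschitz continuity of $p$ with constant $L_p$ yields the desired estimate.

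The only real obstacle is bookkeeping of constants; there is no conceptual difficulty, since every step is either linearity in a Lipschitz map or a product of a bounded factor with a Lipschitz factor. Summing the four estimates yields Lipschitz continuity of the whole map with a constant depending only on $m$, $|\mu|$, $n$, $p_{\max}$, $L_p$, $C_0$, the Lipschitz constant $L_1$ of $Du$, and $L_2$. This dependence explains why the hypotheses of the lemma require not only Lipschitz continuity of the first and second derivatives but also the supplementary boundedness of $D^2u$.
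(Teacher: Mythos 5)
Your proposal is correct and follows essentially the same route as the paper: the routine linear terms are handled by the Lipschitz continuity of $Du$ and $D^2u$, and the $p$-dependent term is treated by exactly the same add-and-subtract (telescoping) identity, using the bound on $|p-2|$ together with the Lipschitz constant $L_2$ for one piece and the boundedness $C_0$ of $D^2u$ together with the Lipschitz constant $L_p$ of $p$ for the other. The only difference is cosmetic bookkeeping of how the four pieces are grouped.
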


\begin{lemma}\label{main lemma add reg}
Let $u_m$ be the unique viscosity solution to the equation \eqref{eq: bdd 2}, and let $U_m^-$ be the lower value function of the game defined in \eqref{value func bounded controls}. Furthermore, assume that $u_m$ is twice differentiable such that $u_m$, $\partial_t u_m$, $Du_m$, $D^2u_m$ are Lipschitz continuous, and $Du_m$, $D^2u_m$ are bounded in $\mathbb R^n\times [0,T)$. Then, it holds
\begin{align*}
u_m(x,t)&=U_m^-(x,t) 
\end{align*}
for all $(x,t)\in \mathbb R^n \times [0,T]$.
\begin{proof}The idea of the proof is to apply Ito's formula to connect the solution $u_m$ and the lower value function $U_m$ with uniformly bounded controls. We utilize discretized controls based on the solution $u_m$, and in the end, pass to a limit with the discretization parameter.

Let $k \in \mathbb N$ be an integer, $(x,t)\in \mathbb R^n\times [0,T)$ and denote $\triangle t:=(T-t)/k$ and $t_i:=t+i\triangle t$ for all $i\in \{0,\dots,k\}$. Note that $t_0=t$ and $t_k=T$, and set $E_i:=[t_{i-1},t_i)$ for all $i\in\{1,\dots,k\}$. For the time interval $E_1$, we can choose a constant control $(a^1,c_1)\in \mathcal H_m$ such that
\begin{align}
\begin{split}\label{chosen control}
\sup_{(b,d)\in\mathcal H_m}&\Phi\big(a^1,b,c_1,d,Du_m(x,t),D^2u_m(x,t)\big)+ru_m(x,t) \\
&\leq \partial_t u_m(x,t)+\frac{1}{k},
\end{split}
\end{align}
since $u_m$ is a solution to \eqref{eq: bdd 1}. Let $s\in E_1$, and let $\big\{\big(b(l),d(l)\big)\big\}\in \mathcal{AC}_m$ be an arbitrary control. We define $X(s)$ as in \eqref{stoc dynamics} with $X(t)=x$ and controls $(a^1,c_1)$ and $\big(b(l),d(l)\big)$, $l\in [t,s]$. By the assumptions, $u_m$ is regular enough to utilize Ito's formula. Thus, it holds
\begin{align}
\begin{split}\label{ito to u}
&u_m\big(X(s),s\big)-u_m(x,t) \\
&=\int_t^s\partial_t u_m\big(X(l),l)\ud l +\sum_{i=1}^n\int_t^s \frac{\partial u_m}{\partial x_i}\big(X(l),l\big)\ud X_i(l) \\
&\hspace{1em}+\frac{1}{2}\sum_{i,j=1}^n\int_t^s\frac{\partial^2 u_m}{\partial x_i\partial x_j}(X(l),l)\ud \langle X_i,X_j\rangle(l).
\end{split}
\end{align}
For brevity, we denote
\begin{align*}
\Phi_1^X(s)&:=\Phi\big(a^1,b(s),c_1,d(s);(X(s),s),Du_m(X(s),s),D^2u_m(X(s),s)\big), \\
\Phi_1^x(s)&:=\Phi\big(a^1,b(s),c_1,d(s);(x,s),Du_m(x,s),D^2u_m(x,s)\big).
\end{align*}
Therefore by utilizing \eqref{comp stoc dynamics} and \eqref{ito to u}, we get
\begin{align}
\begin{split}\label{ito final}
u_m\big(X(s),s\big)&=u_m(x,t)+\int_t^s\big(\partial_t u_m\big(X(l),l)-\Phi_1^X(l)\big)\ud l\\
&\hspace{1em}+G\big(X(s),s\big).
\end{split}
\end{align}
Here, it holds
\begin{align*}
G\big(X(s),s\big)=&\sum_{i=2}^n\bigg(\int_t^s\big\langle Du_m(X(l),l),p_{a^1}^{i-1}\big\rangle\ud W_i^1(l) \\
&\hspace{4em}+\int_t^s\big\langle Du_m(X(l),l),p_{b(l)}^{i-1}\big\rangle\ud W_i^2(l)\bigg) \\
&+\sqrt{p\big(X(l),l\big)-1}\bigg(\int_t^s\big\langle Du_m(X(l),l),a^1\big\rangle\ud W_1^1(l) \\
&\hspace{9em}+\int_t^s\big\langle Du_m(X(l),l),b(l)\big\rangle\ud W_2^1(l)\bigg),
\end{align*}
where we recall that $p_{\nu}^i$ denotes the $i$-th column vector of the matrix $P^\bot_{\nu}$ for all $\nu\in \mathbb S^{n-1}$.

We note that for any adapted one dimensional process $\big\{\theta(l)\big\}_{l\in[0,T]}$ with $\mathbb E\int_0^T\theta^2(l)\ud l<\infty$, it holds
$$
\mathbb E\int_0^h\theta(l)\ud W(l)=0
$$
for all $h\in[0,T]$, where $W$ is a one dimensional Brownian motion starting from the origin. Thus, because $Du_m$ and $p$ are assumed to be bounded, it holds $$\mathbb E G\big(X(s),s\big)=0.$$ Therefore by estimating the function $(z,l) \mapsto e^{-rl}u_m(z,l)$ instead of $(z,l) \mapsto u_m(z,l)$ in a similar way to \eqref{ito final}, it holds
\begin{align*}
&\mathbb E \big[e^{-rs}u_m\big(X(s),s\big)-e^{-rt}u_m(x,t)\big] \\
& =\mathbb E \int_t^s e^{-rl}\big(\partial_t u_m\big(X(l),l\big)-\Phi_1^X(l)-ru_m\big(X(l),l\big)\big)\ud l.
\end{align*}
This implies
\begin{align*}
u_m(x,t)=\mathbb E \Big[&e^{-r(s-t)}u_m\big(X(s),s\big) \\
&- \int_t^s e^{-r(l-t)}\big(\partial_t u_m\big(X(l),l)-\Phi_1^X(l)-ru_m\big(X(l),l\big)\big)\ud l\Big].
\end{align*}
Next, we add and subtract terms so that we can utilize \eqref{chosen control}. In particular, it holds
\begin{align}
\begin{split}\label{eq:A}
u_m(x,t)=\mathbb E \Big[&e^{-r(s-t)}u_m\big(X(s),s\big) +K_1+K_2+K_3 \\
&+ \int_t^s e^{-r(l-t)}\big(-\partial_t u_m(x,t)+\Phi_1^x(t)+ ru_m(x,t)\big)\ud l\Big],
\end{split}
\end{align}
where
\begin{align*}
K_1&= \int_t^s e^{-r(l-t)}\big(\partial_t u_m(x,t)- \partial_t u_m\big(X(l),l\big)\big)\ud l,\\ 
K_2&=\int_t^s e^{-r(l-t)}\big(\Phi_1^X(l)- \Phi_1^x(t)\big)\ud l,\\
K_3&=\int_t^s e^{-r(l-t)}\big(ru_m\big(X(l),l\big)- ru_m(x,t)\big)\ud l.
\end{align*}
Hence by using \eqref{chosen control} to estimate the last term in \eqref{eq:A}, we get
\begin{equation}\label{eq:B}
u_m(x,t) \leq \mathbb E \Big[e^{-r(s-t)}u_m\big(X(s),s\big) +K_1+K_2+K_3\Big]+\frac{s-t}{k}. 
\end{equation}

We recall that $u_m$, $\partial_t u_m$, $Du_m$, and $D^2u_m$ are Lipschitz continuous, and we denote the largest Lipschitz constant of these by $L_m$. Then, we can estimate
$$
\mathbb E|K_1|+\mathbb E|K_3| \leq(1+r)L_m\Big[(s-t)^2+\mathbb E\int_t^s|X(l)-x|\ud l\Big].
$$
Furthermore, let us denote 
$$
C_{0,m}:=\sup_{(z,l)\in\mathbb R^n\times [0,T]}\big|\big|D^2u_m(z,l)\big|\big|,
$$
which is assumed to be bounded. Then, Lemma \ref{lemma: phi cont} yields
\begin{align*}
\big|\Phi_1^X(l)- \Phi_1^x(t)\big|\leq L\big(|X(l)-x|^2+(s-t)^2\big)^{1/2}
\end{align*}
for all $l\in[t,s]$ and for a constant $L:=L(m,\mu,p_{\text{max}},n,L_m,C_{0,m},L_p)$. Here, recall that the constant $L_p$ is the Lipschitz constant of $p$. Therefore by applying these estimates with \eqref{eq:B}, we get
\begin{align}
\begin{split}\label{estimate: finalA}
u_m(x,t) \leq &\mathbb E \Big[e^{-r(s-t)}u_m\big(X(s),s\big)\Big]+C\mathbb E\int_t^s|X(l)-x|\ud l\\
&+C(s-t)^2+\frac{s-t}{k}
\end{split}
\end{align}
for a constant $C:=C(m,\mu,p_{\text{max}},n,L_m,C_{0,m},L_p,r)$. By recalling \eqref{comp stoc dynamics} and utilizing Jensen's inequality and Ito's isometry, we see
\begin{align*}
\int_t^s\mathbb E|X(l)-x|\ud l\leq \tilde{C}\big((s-t)^2+(s-t)^{3/2}\big)
\end{align*}
for a constant $\tilde{C}:=\tilde{C}(m,\mu,p_{\text{max}},n)$. Thus, combining this with \eqref{estimate: finalA} and letting $s \to t_1$, we have
\begin{align}\label{eq: first iteration}
u_m(x,t) \leq &~\mathbb E \Big[e^{-r\Delta t}u_m\big(X(t_1),t_1\big)\Big]+C(\Delta t)^2+C(\Delta t)^{3/2}+\frac{\Delta t}{k}
\end{align}
for some generic constant $C$.

Next, we replicate the same argument as above in the time interval $E_2$. By Lemma \ref{lemma: phi cont}, it follows that there are a sequence $\mathcal C_2:=(a^{2,i},c_{2,i})_{i=1}^\infty$ and a covering $U_2:=\big(B(y^{2,i},r_{2,i})\big)_{i=1}^\infty$ of $\mathbb R^n$ such that
\begin{align}
\begin{split}\label{eq: need uniform cont}
\sup_{(b,d)\in \mathcal H_m}\bigg(&\Phi\big(a^{2,i},b,c_{2,i},d,Du_m(y,t_1),D^2u_m(y,t_1)\big)+ru_m(y,t_1)\bigg)\\
&\leq \partial_t u_m(y,t_1)+\frac{1}{k}
\end{split}
\end{align}
for all $y\in B(y^{2,i},r_{2,i})$. For $y\in \mathbb R^n$,  let $I_2(y)$ be the smallest index $i$ for which $y\in B(y^{2,i},r_{2,i})$ in the covering $\big(B(y^{2,i},r_{2,i})\big)_{i=1}^\infty$ of $\mathbb R^n$. Then, we define a function $z^2:\mathbb R^n \to \mathcal H_m$ by
$$
z^2(y)=\big(a^{2,I_2(y)},c_{2,I_2(y)}\big)
$$
for all $y\in \mathbb R^n$. Observe that we can construct $z^2$ in such a way that it is Borel measurable. Furthermore, we define a control $\big(a^2(l),c_2(l)\big)$ such that 
\begin{align*}
\big(a^2(l),c_2(l)\big)=\begin{cases}
(a^1,c_1),&~\text{ if }l\in E_1,\\
z^2\big(X(t_1)\big),&~\text{ if }l\in E_2.
\end{cases}
\end{align*}
By the inequality \eqref{eq: need uniform cont}, we can now repeat the argument above to get
\begin{align*}
u_m\big(X(t_1),t_1\big) \leq &~\mathbb E\Big[e^{-r\Delta t}u_m\big(X(t_2),t_2\big)\Big] +C(\Delta t)^2+C(\Delta t)^{3/2}+\frac{\Delta t}{k}.
\end{align*}
Thus, combining this estimate with \eqref{eq: first iteration}, it holds
\begin{align*}
u_m(x,t) \leq \mathbb E \Big[e^{-r2\Delta t}u_m\big(X(t_2),t_2\big)\Big]+2C(\Delta t)^2+2C(\Delta t)^{3/2}+\frac{2\Delta t}{k}.
\end{align*}

The idea is to replicate the argument in all time intervals $E_1, \dots E_k$. Indeed, after the $k$-th iteration, we get a control $\big(a^k(l),c_k(l)\big)$ such that
\begin{align*}
\big(a^k(l),c_k(l)\big)=\begin{cases}
(a^{k-1}(l),c_{k-1}(l)),&~\text{ if }l\in  \cup_{i=1}^{k-1} E_i \\
z^k\big(X(t_{k-1})\big),&~\text{ if }l\in  E_k.
\end{cases}
\end{align*}
Here, $z^k$ corresponds to the triplet $\big(\mathcal C_k,U_k,I_k(y)\big)$ in the same way as above. In particular, we have
\begin{align}\label{eq: final iteration}
u_m(x,t) \leq \mathbb E \Big[e^{-r(T-t)}g\big(X(T)\big)\Big]+(T-t)\big(C\Delta t+C(\Delta t)^{1/2}\big)+\Delta t,
\end{align}
because it holds $k=(T-t)/\triangle t$ and $u_m(z,T)=g(z)$ for all $z \in \mathbb R^n$.

Let $S\in \mathcal S_m$, and recall that the control $\big(b(l),d(l)\big)$ is arbitrary. We set
$$
\big(b(l),d(l)\big):=S\big(a^k(l),c_k(l)\big)
$$
for all $l\in [0,T]$. Then by \eqref{eq: final iteration}, it holds
\begin{align*}
&u_m(x,t) \leq \mathbb E \Big[e^{-r(T-t)}g\big(X(T)\big)\Big]+(T-t)\big(C\Delta t+C(\Delta t)^{1/2}\big)+\Delta t \\
& \leq \sup_{A\in \mathcal{AC}_m} \mathbb E\Big[e^{-r(T-t)}g\big(X(T)\big)\Big]+(T-t)\big(C\Delta t+C(\Delta t)^{1/2}\big)+\Delta t.
\end{align*}
Because $S\in \mathcal S_m$ is arbitrary, by letting $k \to \infty$, this yields
$$
u_m(x,t) \leq\inf_{S\in \mathcal{S}_m}\sup_{A\in \mathcal{AC}_m} \mathbb E\Big[e^{-r(T-t)}g\big(X(T)\big)\Big].
$$

The proof of the opposite inequality is analogous. Again, Lemma \ref{lemma: phi cont} implies that there are a sequence $\tilde{\mathcal C}_j:=(b^{j,i},d_{j,i})_{i=1}^\infty$ and a covering $\tilde{U}_j:=\big(B(\tilde{y}^{j,i},\tilde{r}_{j,i})\big)_{i=1}^\infty$ of $\mathbb R^n$ such that
\begin{align*}
\inf_{(a,c)\in \mathcal H_m}\bigg(&\Phi\big(a,b^{j,i},c,d_{j,i},Du_m(y,t_{j-1}),D^2u_m(y,t_{j-1})\big)+ru_m(y,t_{j-1})\bigg)\\
&\geq \partial_t u_m(y,t_{j-1})-\frac{1}{k}
\end{align*}
for all $y\in B(\tilde{y}^{j,i},\tilde{r}_{j,i})$ and $j\in\{2,\dots,k\}$, because $u_m$ is a solution to \eqref{eq: bdd 1}. Then by a similar reasoning to the above, we construct a control $\big(b^k(l),d_k(l)\big)$ to deduce
\begin{align}
\begin{split}\label{eq: final iteration opposite}
u_m(x,t) &\geq \mathbb E \Big[e^{-r(T-t)}g\big(X(T)\big)\Big]-C(T-t)\Delta t \\
&\hspace{1em}-C(T-t)(\Delta t)^{1/2}-\Delta t.
\end{split}
\end{align}
Let $A\in \mathcal{AC}_m$. We construct $S\in \mathcal S_m$ such that it holds
$$
S(A)=\big(b^k(l),d_k(l)\big)
$$
for all $l\in[0,T]$. Therefore, the inequality \eqref{eq: final iteration opposite} implies
\begin{align*}
u_m(x,t) &\geq \mathbb E \Big[e^{-r(T-t)}g\big(X(T)\big)\Big]-(T-t)\big(C\Delta t+C(\Delta t)^{1/2}\big)-\Delta t \\
& \geq  \inf_{S\in \mathcal{S}_m} \mathbb E\Big[e^{-r(T-t)}g\big(X(T)\big)\Big]-(T-t)\big(C\Delta t+C(\Delta t)^{1/2}\big)-\Delta t.
\end{align*}
Hence, by letting $k\to \infty$, we get
$$
u_m(x,t) \geq\inf_{S\in \mathcal{S}_m}\sup_{A\in \mathcal{AC}_m} \mathbb E\Big[e^{-r(T-t)}g\big(X(T)\big)\Big].
$$
Thus, the proof is complete.
\end{proof}
\end{lemma}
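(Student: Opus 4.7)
The plan is to verify both inequalities $u_m\leq U_m^-$ and $u_m\geq U_m^-$ by the standard verification argument built on Ito's formula; by symmetry I describe only $\leq$ in detail. Fix $(x,t)\in\mathbb R^n\times[0,T)$ and $k\in\mathbb N$, and set $\Delta t=(T-t)/k$, $t_i=t+i\Delta t$, $E_i=[t_{i-1},t_i)$. I will construct inductively a piecewise constant control $A^k=(a^k(\cdot),c_k(\cdot))\in\mathcal{AC}_m$ for Player~1 that nearly realizes the infimum defining $F_m^-$ on each $E_i$, then let Player~2 choose an arbitrary $(b(\cdot),d(\cdot))\in\mathcal{AC}_m$.

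On $E_1$, since $u_m$ solves \eqref{eq: bdd 2} pointwise, I pick $(a^1,c_1)\in\mathcal H_m$ with
\begin{align*}
\sup_{(b,d)\in\mathcal H_m}\Phi\big(a^1,b,c_1,d;(x,t),Du_m(x,t),D^2u_m(x,t)\big)+ru_m(x,t)\leq\partial_tu_m(x,t)+\tfrac{1}{k}.
\end{align*}
For $i\geq 2$ the starting point $X(t_{i-1})$ is random. Using the continuity from Lemma \ref{lemma: phi cont} of the map $y\mapsto\Phi\big(a,b,c,d;(y,t_{i-1}),Du_m(y,t_{i-1}),D^2u_m(y,t_{i-1})\big)$ together with compactness of $\mathcal H_m$, I cover $\mathbb R^n$ by countably many balls $B(y^{i,j},r_{i,j})$ on each of which a single $(a^{i,j},c_{i,j})\in\mathcal H_m$ majorizes the infimum up to $1/k$. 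Letting $I_i(y)$ be the smallest index with $y$ in the corresponding ball yields a Borel selector $z^i(y)=(a^{i,I_i(y)},c_{i,I_i(y)})$, and setting $(a^k(l),c_k(l))=z^i(X(t_{i-1}))$ for $l\in E_i$ gives an $\{\mathcal F_l\}$-adapted control with $\Lambda(A^k)\leq m$.

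Next I apply Ito's formula to $l\mapsto e^{-r(l-t_{i-1})}u_m(X(l),l)$ on each $E_i$ using \eqref{comp stoc dynamics}. Taking conditional expectation the stochastic integral vanishes because $Du_m$ and $p-1$ are bounded, and the drift produces $\partial_tu_m-\Phi_i^X-ru_m$ evaluated along $X$, where $\Phi_i^X(l)$ denotes $\Phi$ with Player~1 control $z^i(X(t_{i-1}))$ and Player~2 control $(b(l),d(l))$. Adding and subtracting the value frozen at $(X(t_{i-1}),t_{i-1})$ and invoking the defining property of $z^i$ controls the frozen piece by $\Delta t/k$. The remaining fluctuation is dominated by a constant multiple of $\mathbb E|X(l)-X(t_{i-1})|+|l-t_{i-1}|$ via the Lipschitz bounds on $u_m,\partial_tu_m,Du_m,D^2u_m$ and Lemma \ref{lemma: phi cont}, while Jensen and Ito isometry applied to \eqref{comp stoc dynamics} give $\mathbb E|X(l)-X(t_{i-1})|\leq C(\Delta t+(\Delta t)^{1/2})$. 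Telescoping over $i=1,\dots,k$ gives
\begin{align*}
u_m(x,t)\leq\mathbb E\big[e^{-r(T-t)}g(X(T))\big]+(T-t)\big(C\Delta t+C(\Delta t)^{1/2}\big)+\Delta t.
\end{align*}

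Given any $S\in\mathcal S_m$, setting $(b(\cdot),d(\cdot))=S(A^k)$ bounds the right side by $\sup_{A\in\mathcal{AC}_m}\mathbb E[e^{-r(T-t)}g(X(T))]$, since $A^k\in\mathcal{AC}_m$; taking $\inf_S$ and then $k\to\infty$ yields $u_m(x,t)\leq U_m^-(x,t)$. The reverse inequality follows by the symmetric choice: I select $(b^k(\cdot),d_k(\cdot))$ near-achieving the supremum in $F_m^-$, define an admissible strategy by $S(A)=(b^k,d_k)$ for arbitrary $A\in\mathcal{AC}_m$, and rerun the Ito argument with reversed inequalities. The main obstacle I foresee is constructing the Player~1 control as a progressively measurable admissible process adapted to the random trajectory while retaining the near-optimality property; this is resolved by the Borel selector coming from the covering argument based on Lemma \ref{lemma: phi cont}, after which the Ito estimates are routine.
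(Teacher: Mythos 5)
Your proposal is correct and follows essentially the same route as the paper's proof: near-optimal constant controls on each subinterval chosen via the solution property of $u_m$, a Borel selector from a countable covering (justified by Lemma \ref{lemma: phi cont}) to handle the random starting points, It\^{o}'s formula applied to the discounted solution with the stochastic integral vanishing in expectation, Lipschitz and moment estimates yielding the $O(\Delta t+(\Delta t)^{1/2})$ error per step, telescoping, and the passage to $\inf_S\sup_A$ with the symmetric argument for the reverse inequality. No gaps beyond the level of informality already present in the paper.
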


\section{Going to the limit: action sets without a uniform bound}\label{sec: limit}
In this section, we let bounds on the controls increase. To this end, we first show that viscosity solutions to the limiting equation are unique under suitable assumptions. Then by utilizing the stability principle and the equicontinuity of the families of viscosity solutions to the terminal value problems \eqref{eq: bdd 2} and \eqref{eq: bdd 1}, we see that there exist subsequences of solutions to  \eqref{eq: bdd 2} and \eqref{eq: bdd 1} converging uniformly to solutions of the limiting equation. The final part is to show that a subsequence of the corresponding value functions converges to a value function for the game without a uniform bound on the controls.

Let $J_0:=\mathbb R^n\times [0,T]\times \mathbb R \times \big(\mathbb R^n\setminus \{0\}\big) \times S(n)$, and define $F: J_0\to \mathbb R$ through
$$
F\big((x,t),\xi,\nu,M\big)=\big(p(x,t)-2\big)\frac{\langle M\nu,\nu\rangle}{|\nu|^2}+\tr(M)+\langle \mu,\nu \rangle-r\xi.
$$
Then, the limiting terminal value problem for \eqref{eq: bdd 2} and \eqref{eq: bdd 1} as $m\to \infty$ is
\begin{align}
\begin{split}\label{eq: bdd 3}
\begin{cases}
\partial_tu+F\big((x,t),u,Du,D^2u\big)=0&\text{in }\mathbb R^n\times (0,T), \\
u(x,T)=g(x)&\text{on }\mathbb R^n.
\end{cases}
\end{split}
\end{align}
As before, this equation is understood in the viscosity sense. We take care of the points, where the gradient of the underlying function in the operator $F$ vanishes, via semicontinuous envelopes. Let us denote
$$
F_*\big((x,t),\xi,\nu,M\big):=\liminf_{\tilde{\nu} \to \nu}F\big((x,t),\xi,\tilde{\nu},M\big)
$$
for all $(x,t)\in \mathbb R^n \times [0,T]$, $\xi\in \mathbb R$, $\nu \in \mathbb R^n$ and $M\in S(n)$, and $F^*:=-(-F)_*$. The following definition parallels Definition \ref{def: belmann-i}.
\begin{definition}\label{limit def visc}
($i$) A lower semicontinuous function $\ol{u}:\mathbb R^n \times [0,T] \to \mathbb R$ is a viscosity supersolution to \eqref{eq: bdd 3}, if it satisfies the growth bound \eqref{linear growth}, 
$$
\ol{u}(x,T)\geq g(x)
$$
for all $x \in \mathbb R^n$, and if the following holds. For all $(x_0,t_0)\in \mathbb R^n \times (0,T)$ and for all $\phi \in C^{2,1}\big(\mathbb R^n \times (0,T)\big)$ such that
\begin{itemize}
\item $\ol{u}(x_0,t_0)=\phi(x_0,t_0)$
\item $\ol{u}(x,t)>\phi(x,t)$ for all  $(x,t)\not=(x_0,t_0)$
\end{itemize}
it holds
$$
\partial_t\phi(x_0,t_0)+F\big((x_0,t_0),\phi(x_0,t_0),D\phi(x_0,t_0),D^2\phi(x_0,t_0)\big)\leq 0
$$
whenever $D\phi(x_0,t_0)\not=0$, and 
$$
\partial_t\phi(x_0,t_0)+F_*\big((x_0,t_0),\phi(x_0,t_0),0,D^2\phi(x_0,t_0)\big)\leq 0,
$$
whenever $D\phi(x_0,t_0)=0$.

($ii$) An upper semicontinuous function $\underline{u}:\mathbb R^n \times [0,T] \to \mathbb R$ is a viscosity subsolution to \eqref{eq: bdd 3}, if it satisfies the growth bound \eqref{linear growth}, 
$$
\underline{u}(x,T)\leq g(x)
$$
for all $x \in \mathbb R^n$, and if the following holds. For all $(x_0,t_0)\in \mathbb R^n \times (0,T)$ and for all $\phi \in C^{2,1}\big(\mathbb R^n \times (0,T)\big)$ such that
\begin{itemize}
\item $\underline{u}(x_0,t_0)=\phi(x_0,t_0)$
\item $\underline{u}(x,t)<\phi(x,t)$ for all  $(x,t)\not=(x_0,t_0)$
\end{itemize}
it holds
$$
\partial_t\phi(x_0,t_0)+F\big((x_0,t_0),\phi(x_0,t_0),D\phi(x_0,t_0),D^2\phi(x_0,t_0)\big)\geq 0,
$$
whenever $D\phi(x_0,t_0)\not=0$, and 
$$
\partial_t\phi(x_0,t_0)+F^*\big((x_0,t_0),\phi(x_0,t_0),0,D^2\phi(x_0,t_0)\big)\geq 0,
$$
whenever $D\phi(x_0,t_0)=0$.

($iii$) If a function $u :\mathbb R^n \times [0,T] \to \mathbb R$ is a viscosity supersolution and a subsolution to \eqref{eq: bdd 3}, then $u$ is a viscosity solution to \eqref{eq: bdd 3}.
\end{definition}

\begin{remark}\label{equiv def}
Observe that for any test function $\phi \in C^{2,1}\big(\mathbb R^n \times (0,T)\big)$ such that $D\phi(x_0,t_0)\not=0$ or $D^2\phi(x_0,t_0)=\textbf 0$ in the Definition \ref{limit def visc}, it holds
\begin{align*}
&F_*\big((x_0,t_0),\phi(x_0,t_0),D\phi(x_0,t_0),D^2\phi(x_0,t_0)\big) \\
&=F^*\big((x_0,t_0),\phi(x_0,t_0),D\phi(x_0,t_0),D^2\phi(x_0,t_0)\big) 
\end{align*}
for all $(x_0,t_0)\in \mathbb R^n \times (0,T)$. 
\end{remark}
To prove a comparison principle for the equation \eqref{eq: bdd 3}, we follow the path developed in \cite{gigagis91}, see also \cite{chengg91,juutinenlm01,kawohlmp12}. Here, the main difficulties arise from the $(x,t)$ dependence in $F$ as well as from the unboundedness of the domain.
\begin{theorem}\label{comparison limit eq}
Let $\underline{u}$ and $\ol{u}$ be continuous viscosity sub- and supersolutions to \eqref{eq: bdd 3} in the sense of Definition \ref{limit def visc}, respectively. Then, it holds
$$
\underline{u}(x,t)\leq \ol{u}(x,t)
$$
for all $(x,t)\in \mathbb R^n\times [0,T]$.
\begin{proof}
The proof is by contradiction. We assume that 
\begin{align}
\begin{split}\label{cp comparison}
\alpha&:=\sup_{\mathbb R^n \times [0,T]}\big(\underline{u}-\ol{u}\big)>0.
\end{split}
\end{align}
Let $\eps,\delta,\gamma>0$, and define
$$
w_{\eps,\delta,\gamma}(x,y,t)=\underline{u}(x,t)-\ol{u}(y,t)-\frac{1}{4\eps}|x-y|^4-B_{\delta,\gamma}(x,y,t)
$$
for all $x,y\in \mathbb R^n$ and $t\in(0,T]$, where
\begin{align}\label{infty barrier}
B_{\delta,\gamma}(x,y,t):=\delta(|x|^2+|y|^2)+\gamma t^{-1}.
\end{align}
The function $B_{\delta,\gamma}$ plays the role of a barrier for space infinity and $t=0$. 

We can show, see \cite[Proposition 2.3]{gigagis91}, that there are constants $K,K'>0$ independent of $x,y,t$ such that
\begin{align}\label{eq: rough growth estimate}
\underline{u}(x,t)-\ol{u}(y,t)\leq K|x-y|+K'(1+t)
\end{align}
for all $x,y\in \mathbb R^n$ and $t\in[0,T]$. Indeed, because for $R'>0$ it holds
$$
\Big|F\big((x,t),\xi,p,M\big)\Big|\leq (p_{\text{max}}-2+n+|\mu|)R'+r|\xi|<\infty
$$
for all $(x,t,\xi,p,M)\in J_0$ such that $|p|\leq R'$ and $||M||\leq R'$, we can utilize the same arguments as in \cite[Proposition 2.3]{gigagis91}. Therefore by the estimate \eqref{eq: rough growth estimate}, it holds $\alpha<\infty$ in \eqref{cp comparison}. 

We denote by $(\hat{x},\hat{y},\hat{t})$ a maximum point of $w_{\eps,\delta,\gamma}$ in $\mathbb R^n \times \mathbb R^n \times [0,T]$. The growth condition \eqref{linear growth} and the barrier \eqref{infty barrier} ensure that $w_{\eps,\delta,\gamma}(x,y,t)<0$, when $x,y$ are outside a compact set $E\subset \mathbb R^n \times \mathbb R^n$ depending on $\delta$, and $t\in(0,T]$. Therefore, because $w_{\eps,\delta,\gamma}$ is continuous and \eqref{cp comparison} holds with $\alpha<\infty$, the maximum point exists for all $\delta,\gamma$ small enough and any $\eps$. Furthermore by \eqref{cp comparison}, we can find $(x_0,t_0)\in \mathbb R^n \times [0,T]$ such that 
$$
\underline{u}(x_0,t_0)-\ol{u}(x_0,t_0)>\alpha-\eps/3.
$$ 
Because $\underline{u}-\ol{u}$ is continuous, we may assume that $t_0>0$. Consequently, for $\eps<\alpha$ there are $\delta_0:=\delta_{0}(\eps)>0$ and $\gamma_0:=\gamma_{0}(\eps)>0$ such that
\begin{equation}\label{w bounded uniformlu from below}
w_{\eps,\delta,\gamma}(\hat{x},\hat{y},\hat{t})\geq \underline{u}(x_0,t_0)-\ol{u}(x_0,t_0)-2\delta|x_0|-\gamma t_0^{-1}>\alpha-\eps
\end{equation}
for all $\delta<\delta_0$ and $\gamma<\gamma_0$. Let $\eps<\alpha/2,\delta<\delta_0$ and $\gamma<\gamma_0$. Then by \eqref{w bounded uniformlu from below} we can estimate
\begin{align*}
\underline{u}(\hat{x},\hat{t})-\ol{u}(\hat{y},\hat{t})&>\frac{1}{4\eps}|\hat{x}-\hat{y}|^4+B_{\delta,\gamma}(\hat{x},\hat{y},\hat{t})\geq \frac{1}{4\eps}|\hat{x}-\hat{y}|^4.
\end{align*}
This and \eqref{eq: rough growth estimate} imply
$$
|\hat{x}-\hat{y}| \leq   4\eps\big(K|\hat{x}-\hat{y}|^{-3}+K'(1+T)|\hat{x}-\hat{y}|^{-4}\big).
$$
Therefore, we have $|\hat{x}-\hat{y}|<C$ for some $C<\infty$ independent of $\eps,\delta$ and $\gamma$. Moreover, it holds
\begin{equation}\label{maximum point convergence rate}
|\hat{x}-\hat{y}| \leq \max\big\{\eps^{1/8},4K\eps^{5/8}+4K'(1+T)\sqrt{\eps}\big\}=:\zeta(\eps).
\end{equation}
By an analogous argument, we can deduce $\hat{t}>0$. Because it holds $\underline{u}(z,T)\leq \ol{u}(z,T)$ for all $z\in \mathbb R^n$ by the assumptions, the inequality \eqref{w bounded uniformlu from below} yields $\hat{t}<T$. In addition, because $|\hat{x}-\hat{y}|$ is bounded, the estimate \eqref{eq: rough growth estimate} implies that $w_{\eps,\delta,\gamma}(\hat{x},\hat{y},\hat{t})$ is uniformly bounded from above with respect to $\delta$. Hence, because $w_{\eps,\delta,\gamma}(\hat{x},\hat{y},\hat{t})$ increases as $\delta \to 0$, the quantity $\lim_{\delta \to 0}w_{\eps,\delta,\gamma}(\hat{x},\hat{y},\hat{t})$ exists. Therefore by denoting $(\tilde{x},\tilde{y},\tilde{t})$ a global maximum point of  $w_{\eps,\delta/2,\gamma}$, we have 
$$
w_{\eps,\delta/2,\gamma}(\tilde{x},\tilde{y},\tilde{t})\geq w_{\eps,\delta,\gamma}(\hat{x},\hat{y},\hat{t})+\delta/2\big(|\hat{x}|^2+|\hat{y}|^2\big)
$$
implying
\begin{equation}\label{delta convergence}
\delta \big(|\hat{x}|^2+|\hat{y}|^2\big) \to 0
\end{equation}
as $\delta \to 0$.

By theorem of sums, see \cite[Theorem 8.3]{grandallil92}, there exist symmetric matrices $X:=X(\eps,\delta)$ and $Y:=Y(\eps,\delta)$, and real numbers $\tau_{\underline{u}}$ and $\tau_{\ol{u}}$, such that $\tau_{\underline{u}}-\tau_{\ol{u}}=\partial_t B_{\delta,\gamma}(\hat{x},\hat{y},\hat{t})=-\gamma \hat{t}^{-2}$ and
\begin{align}
\begin{split}\label{super jet u2}
\Big(\tau_{\underline{u}},\eps^{-1}\abs{\hat{x}-\hat{y}}^2(\hat{x}-\hat{y})+2\delta\hat{x},\,X\Big)&\in \ol{\mathcal P}^{2,+}\underline{u}(\hat{x},\hat{t}),\\
\Big(\tau_{\ol{u}},\eps^{-1}\abs{\hat{x}-\hat{y}}^2(\hat{x}-\hat{y})-2\delta\hat{y},\,Y\Big)&\in  \ol{\mathcal P}^{2,-}\ol{u}(\hat{y},\hat{t}).
\end{split}
\end{align}
Furthermore by computing the second derivatives of the function $B_{\delta,\gamma}(x,y,t)+\frac{1}{4\eps}|x-y|^4$, it holds 
\begin{align}
\begin{split}\label{thm of sum for hessians}
\left[ \begin{array}{cc}
X&0\\
0&-Y
\end{array} \right]
\leq 
&~(1+4\eps\delta)\left[ \begin{array}{cc}
M&-M\\
-M&M
\end{array} \right]
+2\eps\left[ \begin{array}{cc}
M^2&-M^2\\
-M^2&M^2
\end{array} \right] \\
&+2\delta(1+2\delta)\left[ \begin{array}{cc}
 I&0\\
0& I
\end{array} \right]
\end{split}
\end{align}
with
$$
M:=\eps^{-1}\Big( 2(\hat{x}-\hat{y})\otimes (\hat{x}-\hat{y})+\abs{\hat{x}-\hat{y}}^2I\Big),
$$
and
\begin{align}\label{lower bound hessian}
\left[ \begin{array}{cc}
X&0\\
0&-Y
\end{array} \right]\geq-(\eps^{-1}+3\eps^{-1}|\hat{x}-\hat{y}|^2+2\delta)\left[ \begin{array}{cc}
I&0\\
0&I
\end{array} \right].
\end{align}
Thus, because $\underline{u}$ is a subsolution and $\ol{u}$ is a supersolution, it holds by \eqref{super jet u2}
\begin{align}
\begin{split}\label{vali estimaatti1}
\tau_{\underline{u}}+F^*\big((\hat{x},\hat{t}),\underline{u}(\hat{x},\hat{t}),\eps^{-1}\abs{\hat{x}-\hat{y}}^2(\hat{x}-\hat{y})+2\delta \hat{x},X\big) &\geq 0, \\
\tau_{\ol{u}}+F_*\big((\hat{y},\hat{t}),\ol{u}(\hat{y},\hat{t}),\eps^{-1}\abs{\hat{x}-\hat{y}}^2(\hat{x}-\hat{y})-2\delta \hat{y},Y\big)&\leq 0,
\end{split}
\end{align}
see also Remark \ref{equiv def}.

We consider two different cases depending on the behavior of $\hat x - \hat y$ as $\delta \to 0$. First, assume that $\hat{x}-\hat{y} \to 0$ as $\delta \to 0$. Then by the estimate \eqref{thm of sum for hessians}, it holds
$$
\limsup_{\delta \to 0}\langle Xz,z\rangle\leq 0 ~\text{and } \liminf_{\delta \to 0}\langle Yz,z\rangle \geq 0
$$
for all $z\in \mathbb R^n$.  Thus by combining this with \eqref{vali estimaatti1}, and recalling \eqref{w bounded uniformlu from below}, the degenerate ellipticity of $F$ and $\delta \hat x, \delta \hat y\to 0$ as $\delta \to 0$ by \eqref{delta convergence}, we can estimate
\begin{align*}
\gamma T^{-2}&\leq  \limsup_{\delta \to 0}F^*\big((\hat{x},\hat{t}),\underline{u}(\hat{x},\hat{t}),0,\textbf{0}\big)- \liminf_{\delta \to 0}F_*\big((\hat{y},\hat{t}),\ol{u}(\hat{y},\hat{t}),0,\textbf{0}\big) \\
&\leq 0.
\end{align*}
Hence, because it holds $\gamma>0$, we have found a contradiction.

Next, we assume $\hat{x}-\hat{y}\to \eta\not=0$ for some subsequence still denoted by $(\delta)$. For brevity, let us denote
\begin{align*}
\tilde{\xi}_x&:=\eps^{-1}\abs{\hat{x}-\hat{y}}^2(\hat{x}-\hat{y})+2\delta\hat{x},\\
\tilde{\xi}_y&:=\eps^{-1}\abs{\hat{x}-\hat{y}}^2(\hat{x}-\hat{y})-2\delta\hat{y},
\end{align*}
$\xi_x:=\tilde{\xi}_x/|\tilde{\xi}_x|$ and $\xi_y:=\tilde{\xi}_y/|\tilde{\xi}_y|$ assuming $\tilde{\xi}_x,\tilde{\xi}_y\not=0$. Then, because of \eqref{w bounded uniformlu from below} and \eqref{vali estimaatti1}, we can estimate 
\begin{align}
\begin{split}\label{vali estimaatti}
0<&~\big(p(\hat{x},\hat{t})-2\big)\big\langle X\xi_x,\xi_x\big \rangle-\big(p(\hat{y},\hat{t})-2\big)\big\langle Y\xi_y,\xi_y\big \rangle \\
&+\sum_{i=1}^n\lambda_i\big(X-Y\big)+2\langle \mu,\delta \hat{x}+\delta \hat{y}\rangle -r\alpha/2,
\end{split}
\end{align}
where $\lambda_i$ denotes the $i$-th eigenvalue of the corresponding matrix. Because the first two matrices in the right-hand side of \eqref{thm of sum for hessians} annihilate, we have
\begin{equation}\label{annihilate}
X -Y \leq 4\delta(1+2\delta)I.
\end{equation}
Thus to complete the proof, we need to estimate the first two terms in the right-hand side of \eqref{vali estimaatti}. 

Let us define $\xi_\delta:=(\hat{x}-\hat{y})/|\hat{x}-\hat{y}|\in \mathbb S^{n-1}$ for all $\delta$ small enough. Then, it holds
\begin{equation}\label{point convergence for xi1}
\xi_{\delta}\to \eta/|\eta|
\end{equation}
as $\delta \to 0$. Observe that by the convergence \eqref{delta convergence}, it also holds
\begin{equation}\label{point convergence for xi2}
\xi_x,\xi_y\to\eta/|\eta|
\end{equation}
as  $\delta \to 0$. Furthermore by \eqref{thm of sum for hessians} and \eqref{lower bound hessian}, $X$ and $Y$ are uniformly bounded with respect to $\delta$, see also \cite[Lemma 5.3]{ishii89}. Thus, because the function $p$ is bounded, the convergences \eqref{point convergence for xi1} and \eqref{point convergence for xi2} imply
\begin{align}
\begin{split}\label{where to use key ineq}
&\big(p(\hat{x},\hat{t})-2\big)\big\langle X\xi_x,\xi_x\big \rangle-\big(p(\hat{y},\hat{t})-2\big)\big\langle Y\xi_y,\xi_y\big \rangle \\
&=\big(p(\hat{x},\hat{t})-1\big)\big\langle X\xi_\delta,\xi_\delta\big \rangle-\big(p(\hat{y},\hat{t})-1\big)\big\langle Y\xi_\delta,\xi_\delta\big \rangle \\
&\hspace{1em}-\big \langle (X-Y)\xi_\delta,\xi_\delta\big \rangle+E_\delta(\hat x,\hat y, \hat t)
\end{split}
\end{align}
for some error $E_\delta(\hat x,\hat y, \hat t)$ such that
$$
E_\delta(\hat x,\hat y, \hat t) \to 0
$$ 
as $\delta \to 0$. For the vector 
$$
\big(\xi_\delta^T\sqrt{p(\hat{x},\hat{t})-1},\xi_\delta^T\sqrt{p(\hat{y},\hat{t})-1}\big)\in \mathbb R^{2n}
$$
in the estimate \eqref{thm of sum for hessians}, it holds
\begin{align}
\begin{split}\label{final estimate to get contra}
& (p(\hat{x},\hat{t})-1)\langle X\xi_\delta,\xi_\delta\big \rangle- (p(\hat{y},\hat{t})-1)\langle Y\xi_\delta,\xi_\delta\big \rangle \\
&\leq \Big(\sqrt{p(\hat{x},\hat{t})-1}-\sqrt{p(\hat{y},\hat{t})-1}\Big)^2\Big((1+4\eps\delta)\big\langle M\xi_\delta,\xi_\delta\big \rangle \\
&\hspace{8em}+2\eps\big\langle M^2\xi_\delta,\xi_\delta\big \rangle\Big) +4(p_{\text{max}}-1)\delta(1+2\delta) \\
&\leq \frac{L_p^2}{4(p_{\text{min}}-1)}|\hat{x}-\hat{y}|^2\Big((1+4\eps\delta)3\eps^{-1}|\hat{x}-\hat{y}|^2+18\eps^{-1}|\hat{x}-\hat{y}|^4\Big) \\
&\hspace{1em}+4(p_{\text{max}}-1)\delta(1+2\delta),
\end{split}
\end{align}
where $L_p$ is the Lipschitz constant of $p$. Moreover by the estimates \eqref{w bounded uniformlu from below} and \eqref{maximum point convergence rate}, it holds
\begin{align*}
\frac{|\hat{x}-\hat{y}|^4}{4\eps}&<\underline{u}(\hat{x},\hat{t})-\ol{u}(\hat{y},\hat{t})-\alpha+\eps \\
&\leq \sup_{|x-y|<\zeta(\eps),t\in [0,T]}\big(\underline{u}(x,t)-\ol{u}(y,t)\big)-\alpha+\eps.
\end{align*}
This estimate, together with \eqref{cp comparison}, implies
\begin{align*}
\lim_{\eps \to 0}\limsup_{\delta,\gamma \to 0}\frac{|\hat{x}-\hat{y}|^4}{\eps}=0.
\end{align*}
Therefore by combining this, \eqref{delta convergence}, \eqref{annihilate}, \eqref{where to use key ineq} and \eqref{final estimate to get contra} with the estimate \eqref{vali estimaatti}, we have found a contradiction by first letting $\delta,\gamma \to 0$ and then $\eps \to 0$. Hence, the proof is complete.
\end{proof}
\end{theorem}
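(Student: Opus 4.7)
The plan is to argue by contradiction via doubling of variables à la Crandall-Ishii-Lions, with the test function tailored to our two nonstandard features: the operator $F$ is singular at zero gradient, and the spatial domain is unbounded while solutions are only of linear growth. Assuming $\alpha := \sup_{\mathbb R^n \times [0,T]}(\underline u - \ol u) > 0$, I would work with
\[
w_{\eps,\delta,\gamma}(x,y,t) := \underline u(x,t) - \ol u(y,t) - \frac{1}{4\eps}|x-y|^4 - \delta\bigl(|x|^2 + |y|^2\bigr) - \gamma/t.
\]
The quartic penalty (rather than the usual quadratic one) is forced by the singularity of $F$: its gradient $\eps^{-1}|x-y|^2(x-y)$ vanishes only when $x = y$, so that the genuine viscosity inequalities for $F$ apply at the maximum point in the generic case, and we do not need to pass through the envelopes $F_*,F^*$. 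The quadratic barrier exploits the linear growth bound \eqref{linear growth} to compactify the supremum, while $\gamma/t$ keeps the time component $\hat t$ away from $0$; the terminal condition $\underline u(\cdot, T) \leq g \leq \ol u(\cdot, T)$ rules out $\hat t = T$.

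Once a maximizer $(\hat x, \hat y, \hat t)$ is produced for small enough $\delta,\gamma$, I would first derive the routine soft bounds: $|\hat x - \hat y|$ tends to $0$ with an explicit modulus in $\eps$ (from $|\hat x - \hat y|^4/(4\eps) \leq \underline u(\hat x, \hat t) - \ol u(\hat y, \hat t)$ combined with a crude Lipschitz-in-$|x-y|$ estimate of the right-hand side, as in \cite{gigagis91}), and $\delta(|\hat x|^2 + |\hat y|^2) \to 0$ as $\delta \to 0$ (by monotone comparison between $w_{\eps,\delta,\gamma}$ and $w_{\eps,\delta/2,\gamma}$). The parabolic theorem of sums would then yield symmetric matrices $X,Y$ and scalars $\tau_{\underline u}, \tau_{\ol u}$ with $\tau_{\underline u} - \tau_{\ol u} = -\gamma/\hat t^2$ such that the appropriate jets lie in $\ol{\mathcal P}^{2,\pm}$ of $\underline u$ and $\ol u$, together with the standard matrix inequality involving
\[
M := \eps^{-1}\bigl(2(\hat x - \hat y)\otimes(\hat x - \hat y) + |\hat x - \hat y|^2 I\bigr)
\]
plus a $\delta$ correction. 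Subtracting the subsolution and supersolution inequalities produces
\[
\gamma/\hat t^2 + r\alpha/2 \leq \bigl(p(\hat x,\hat t) - 2\bigr)\langle X\xi_x, \xi_x\rangle - \bigl(p(\hat y,\hat t) - 2\bigr)\langle Y\xi_y, \xi_y\rangle + \tr(X-Y) + O\bigl(\delta(|\hat x| + |\hat y|)\bigr),
\]
with $\xi_x,\xi_y$ the normalized test-function gradients.

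The main obstacle is the $x$-dependence of $p$. The easy terms dispose of themselves: the annihilation in the first block of the theorem-of-sums inequality gives $X - Y \leq 4\delta(1 + 2\delta)I$, so $\tr(X-Y) \to 0$; and $\xi_x - \xi_y = 2\delta(\hat x + \hat y)$ tends to zero because $\delta|\hat x| = \delta^{1/2}(\delta|\hat x|^2)^{1/2} \to 0$. The delicate step is the principal $p$-Laplacian term: testing the matrix inequality against the vector
\[
\bigl(\sqrt{p(\hat x,\hat t)-1}\,\xi_\delta,\ \sqrt{p(\hat y,\hat t)-1}\,\xi_\delta\bigr), \qquad \xi_\delta := \frac{\hat x - \hat y}{|\hat x - \hat y|},
\]
produces an error of order $\bigl(\sqrt{p(\hat x,\hat t)-1} - \sqrt{p(\hat y,\hat t)-1}\bigr)^2 \langle M\xi_\delta, \xi_\delta\rangle$, which by $p_{\min} > 1$ and the Lipschitz continuity of $p$ is bounded by a constant multiple of $L_p^2\,|\hat x - \hat y|^4/\eps$; since the soft bound already forces $|\hat x - \hat y|^4/\eps \to 0$ as $\eps \to 0$, this error is controlled. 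Splitting into cases according to whether $\hat x - \hat y$ tends to $0$ along a subsequence as $\delta \to 0$ (where one sends $\delta \to 0$ with $\gamma,\eps$ fixed and invokes $F_*, F^*$, contradicting $\gamma > 0$) or converges to a nonzero direction (where one sends $\delta,\gamma \to 0$ and then $\eps \to 0$ to absorb the principal $p$-Lipschitz error), yields in either case the desired contradiction.
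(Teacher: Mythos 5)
Your proposal is correct and follows essentially the same route as the paper's proof: the identical quartic-plus-quadratic-plus-$\gamma/t$ test function, the rough growth estimate from Giga--Goto--Ishii--Sato to get $\alpha<\infty$ and the modulus for $|\hat x-\hat y|$, the monotonicity trick giving $\delta(|\hat x|^2+|\hat y|^2)\to 0$, the parabolic theorem of sums, the dichotomy on whether $\hat x-\hat y\to 0$ as $\delta\to 0$, and in the nondegenerate case the test vector $\bigl(\sqrt{p(\hat x,\hat t)-1}\,\xi_\delta,\sqrt{p(\hat y,\hat t)-1}\,\xi_\delta\bigr)$ producing the Lipschitz-in-$p$ error of order $L_p^2|\hat x-\hat y|^4/\eps$, absorbed by sending $\delta,\gamma\to 0$ and then $\eps\to 0$. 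No substantive differences.
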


A typical phenomenon for equations of $p$-Laplacian type is that the set of test functions used in their definition can be reduced.
\begin{lemma}\label{test f reduction}
Let $u:\mathbb R^n\times [0,T] \to \mathbb R$ be continuous. Then, to test whether or not $u$ is a viscosity super- or subsolution at $(x_0,t_0)$ in the sense of Definition \ref{limit def visc}, it is enough to consider test functions $\phi\in C^{2,1}\big(\mathbb R^n \times (0,T)\big)$ such that either
\begin{itemize}
\item $D\phi(x_0,t_0)\not = 0$\text{ or}
\item $D\phi(x_0,t_0)=0$\text{ and } $D^2\phi(x_0,t_0)=0$.
\end{itemize}
\begin{proof}
We only provide the proof in the context of supersolutions. Let $(x_0,t_0)\in \mathbb R^n\times(0,T)$. Assume that there exist $\delta>0$ and a test function $\phi\in C^{2,1}\big(\mathbb R^n \times (0,T)\big)$ such that $u(x_0,t_0)=\phi(x_0,t_0)$, $u(x,t)>\phi(x,t)$ for $(x,t)\not=(x_0,t_0)$, $D \phi(x_0,t_0)=0$, $D^2\phi(x_0,t_0)\not=\textbf{0}$ and
\begin{equation}\label{failed test function}
0<\partial_t\phi(x_0,t_0)+F_*\big((x_0,t_0),\phi(x_0,t_0),0,D^2\phi(x_0,t_0)\big)-\delta.
\end{equation}
Observe that $u-\phi$ has a strict global minimum at $(x_0,t_0)$. We define a function
$$
w_j(x,t,y,s):=u(x,t)-\phi(y,s)+\frac{j}{4}|x-y|^4+\frac{j}{2}(t-s)^2
$$
for $x,y\in \mathbb R^n, t,s\in [0,T]$. Let $R:=\max\{2|x_0|,1\}>0$, and denote by $(x_j,t_j,y_j,s_j)$ a minimum point of $w_j$ on a compact set $K:=\ol{B}_R(0) \times [0,T]\times \ol{B}_R(0) \times [0,T]$. Because $w_j(x_j,t_j,y_j,s_j)$ increases as $j$ increases, and it is bounded from above by $w_j(x_0,t_0,x_0,t_0)=0$ for all $j$, the limit $$\lim_{j\to\infty}w_j(x_j,t_j,y_j,s_j)<\infty$$ exists. Consequently, the estimate
$$
w_{j/2}(x_{j/2},t_{j/2},y_{j/2},s_{j/2})\leq w_j(x_j,t_j,y_j,s_j)-\frac{j}{8}|x_j-y_j|^4-\frac{j}{4}(t_j-s_j)^2
$$
implies
\begin{align}\label{upper estimate}
j|x_j-y_j|^4+j(t_j-s_j)^2 \to 0
\end{align}
as $j \to \infty$. Furthermore, because the global minimum of $u-\phi$ is strict, it holds 
\begin{align}\label{points converge}
(x_j,t_j,y_j,s_j)\to (x_0,t_0,x_0,t_0)
\end{align}
as $j\to \infty$. In particular, the point $(x_j,t_j,y_j,s_j)$ is not on the boundary of the set $K$ for all $j$ large enough, because it holds $(x_0,t_0)\in B_R(0)\times(0,T)$.

We prove the case $x_j=y_j$ for an infinite sequence of $j$:s, and consider only such indices $j$. The proof in the case $x_j\not=y_j$ for all $j$ large enough is similar to the proof of Theorem \ref{comparison limit eq}, see also \cite{chengg91,juutinenlm01}. By denoting $\varphi(x,y):=\frac j4|x-y|^4$, it holds
$$
D_{x}\varphi(x_j,y_j)=-D_{y}\varphi(x_j,y_j)=0~\text{and }D_{xx}^2\varphi(x_j,y_j)=D_{yy}^2\varphi(x_j,y_j)=\textbf{0}.
$$
Furthermore, the function 
$$
(y,s)\mapsto \phi(y,s)-\varphi(x_j,y)-\frac j2(t_j-s)^2
$$
has a local maximum at $(y_j,s_j)$. These imply $D\phi(y_j,s_j)=-D_{y}\varphi(x_j,y_j)=0$, $\partial_t \phi(y_j,s_j)=-j(t_j-s_j)$ and $D^2\phi(y_j,s_j)\leq -D_{yy}^2\varphi(x_j,y_j)=\textbf{0}$. Thus, because $p$ and $(y,s)\mapsto \lambda_i\big(D^2\phi(y,s)\big)$ for any $i$ are continuous with $\lambda_i$ denoting the $i$-th eigenvalue of the corresponding matrix, the assumption \eqref{failed test function} and the convergence \eqref{points converge} yield
\begin{align}
\begin{split}\label{first est to get contradiction}
0&<\partial_t\phi(y_j,s_j)+\lambda_{\text{max}}\Big(\big(p(y_j,s_j)-1\big)D^2\phi(y_j,s_j)\Big)\\
&\hspace{1em}+\sum_{i\not=i_{\text{min}}}\lambda_i\Big(D^2\phi(y_j,s_j)\Big)-r\phi(y_j,s_j)-\frac \delta2 \\
&\leq-j(t_j-s_j)-r\phi(y_j,s_j)-\frac\delta2
\end{split}
\end{align}
for all $j$ large enough. Furthermore, because the function
\begin{align*}
(x,t)\mapsto \Psi(x,t):=&-\varphi(x,y_j)-\frac j2(t-s_j)^2+\varphi(x_j,y_j)+\frac j2(t_j-s_j)^2\\
&+u(x_j,t_j)
\end{align*}
tests $u$ from below at $(x_j,t_j)$, and it holds $D_x\Psi(x_j,t_j)=0$, we have
$$
0\geq \Psi_t(x_j,t_j)+F_*\big((x_j,t_j),u(x_j,t_j),0,D^2_{xx}\Psi(x_j,t_j)\big).
$$
Thus, because it holds $\Psi_t(x_j,t_j)=-j(t-s_j)$ and $D^2_{xx}\Psi(x_j,t_j)=\bold{0}$, by combining this and \eqref{first est to get contradiction}, we get
$$
0<r\big(u(x_j,t_j)-\phi(y_j,s_j)\big)-\delta/2.
$$
Hence, because $u$ is continuous and \eqref{points converge} holds, we find a contradiction for all $j$ large enough.
\end{proof}
\end{lemma}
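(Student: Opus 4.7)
The plan is to argue by contradiction for the supersolution case, the subsolution case being symmetric. Suppose there is $\phi \in C^{2,1}$ touching a continuous $u$ strictly from below at $(x_0, t_0)$ with $D\phi(x_0, t_0) = 0$ and $D^2\phi(x_0, t_0) \neq 0$, and suppose the reduced supersolution inequality fails at $(x_0, t_0)$ with a strict margin $\delta > 0$. The goal is to exhibit, at a nearby point, an admissible test function -- one with either nonzero gradient, or with vanishing gradient and vanishing Hessian -- against which the restricted supersolution condition also fails, producing the desired contradiction.

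The natural device is variable-doubling with a \emph{quartic} spatial penalty,
\[
w_j(x, t, y, s) = u(x, t) - \phi(y, s) + \tfrac{j}{4}|x - y|^4 + \tfrac{j}{2}(t - s)^2,
\]
minimized on a compact neighborhood of $(x_0, t_0, x_0, t_0)$. The quartic is chosen precisely because its gradient and Hessian vanish on the diagonal $\{x = y\}$, so on that set the test functions induced for $u$ and for $\phi$ see no first- or second-order contribution from the penalty. Routine estimates yield interior minimizers $(x_j, t_j, y_j, s_j) \to (x_0, t_0, x_0, t_0)$ with $j|x_j - y_j|^4 + j(t_j - s_j)^2 \to 0$.

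One then splits into two cases. If $x_j \neq y_j$ for all large $j$, then $u$ is tested from below at $(x_j, t_j)$ by a function with nonzero spatial gradient, and $\phi$ is tested from above at $(y_j, s_j)$ by such a function too, so both viscosity inequalities hold in the classical form; the contradiction follows from a Crandall--Ishii matching argument essentially identical to that in the proof of Theorem \ref{comparison limit eq}, with the Lipschitz continuity of $p$ controlling the cross-point error. If instead $x_j = y_j$ for infinitely many $j$, the penalty contributes nothing at the diagonal, so $u$ is tested at $(x_j, t_j)$ by a function $\Psi$ with $D\Psi(x_j, t_j) = 0$ and $D^2_{xx}\Psi(x_j, t_j) = 0$ -- the admissible case. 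The hypothesis then yields $-j(t_j - s_j) + F_*((x_j, t_j), u(x_j, t_j), 0, 0) \leq 0$, and since $F_*((x, t), \xi, 0, 0) = -r\xi$, this reduces to $-j(t_j - s_j) \leq r\,u(x_j, t_j)$.

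I expect the main obstacle to be reconciling this inequality -- obtained under vanishing Hessian -- with the original strict inequality, which features the nontrivial Hessian $D^2\phi(x_0, t_0)$. The key observation is that the $(y, s)$-minimality yields $\partial_s \phi(y_j, s_j) = -j(t_j - s_j)$ and $D^2\phi(y_j, s_j) \leq 0$; continuity of $\partial_t\phi$ therefore pins down $\lim_{j} \bigl(-j(t_j - s_j)\bigr) = \partial_t\phi(x_0, t_0)$, and passing to the limit forces $D^2\phi(x_0, t_0) \leq 0$. From the explicit form of $F_*$ at vanishing gradient -- which picks the extreme eigenvalue of the Hessian in the $(p - 2)$-term -- one verifies $F_*((x_0, t_0), \phi(x_0, t_0), 0, M) \leq -r\phi(x_0, t_0)$ for every negative semidefinite $M$, so the limiting inequality becomes $\partial_t\phi(x_0, t_0) + F_*(\cdots) \leq 0$, contradicting the standing assumption $\partial_t\phi(x_0, t_0) + F_*(\cdots) > \delta > 0$. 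This final step is where the specific structure of the $p(x, t)$-Laplacian and its lower semicontinuous envelope at critical points enter essentially.
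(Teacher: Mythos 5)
Your proposal is correct and follows essentially the same route as the paper: quartic-penalty variable doubling, the split into the off-diagonal case (deferred to the comparison-principle machinery) and the diagonal case, and in the latter the exploitation of the $(y,s)$-maximality to get $D\phi(y_j,s_j)=0$, $\partial_t\phi(y_j,s_j)=-j(t_j-s_j)$ and $D^2\phi(y_j,s_j)\leq 0$, combined with the fact that $F_*(\cdot,\xi,0,M)\leq -r\xi$ for $M\leq 0$. The only (immaterial) difference is that you pass to the limit $j\to\infty$ and contradict the hypothesis at $(x_0,t_0)$ itself, whereas the paper keeps a $\delta/2$ margin and reaches the contradiction at finite large $j$.
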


The following lemma suggests that $F$ is the correct limiting equation in our setting. The proof for the equation $F_m^+$ is analogous.
\begin{lemma}\label{lemma: limit eq}
Let $(x_m,t_m),(x,t)\in \mathbb R^n\times [0,\infty)$, $\xi_m,\xi \in \mathbb R$, $\nu_m,\nu\in \mathbb R^n\setminus \{0\}$ and $M_m,M\in S(n)$ be such that 
$$
(x_m,t_m)\to(x,t),~\xi_m \to \xi, ~\nu_m\to \nu~\text{and}~M_m\to M
$$
as $m\to \infty$. Then, it holds
$$
F_m^-\big((x_m,t_m),\xi_m,\nu_m,M_m\big)\to -F\big((x,t),\xi,\nu,M\big)
$$
as $m\to \infty$.
\begin{proof}
It is clear that $\langle \mu,\nu_m\rangle \to \langle \mu,\nu\rangle$ and $r\xi_m\to r\xi$ as $m\to \infty$. To complete the proof, we utilize the key inequality
\begin{equation}\label{key ineq}
\big\langle\nu_m/|\nu_m|+\xi,\nu_m\big\rangle\geq 0
\end{equation}
whenever $\xi\in\mathbb S^{n-1}$. 

We set $$\tilde{\Phi}_m:=\inf_{(a,c)\in \mathcal H_m} \sup_{(b,d)\in \mathcal H_m}\Big[-\tr\Big(\mathcal A_{a,b}^{(x_m,t_m)}M_m\Big)-(c+d)\langle a+b,\nu_m\rangle\Big].$$ Because $\big(\nu_m/|\nu_m|,0\big)\in \mathcal H_m$, it holds
\begin{align*}
\tilde{\Phi}_m\leq& \sup_{(b,d)\in \mathcal H_m}\Big[-\tr\Big(\mathcal A_{\frac{\nu_m}{|\nu_m|},b}^{(x_m,t_m)}M_m\Big)-d\langle \nu_m/|\nu_m|+b,\nu_m\rangle\Big].
\end{align*}
Therefore, this estimate and \eqref{key ineq} imply
$$
\tilde{\Phi}_m \leq n\Lambda\big|\big|M_m\big|\big|,
$$
where $\Lambda$ is defined in \eqref{iso lambda}. Hence, $\tilde{\Phi}_m$ is bounded from above as $m\to \infty$.

Because  $\big(-\nu_m/|\nu_m|,m\big)\in\mathcal H_m$, we can estimate 
\begin{align*}
\tilde{\Phi}_m \geq& \inf_{(a,c)\in \mathcal H_m} \Big[-\tr\Big(\mathcal A_{a,-\frac{\nu_m}{|\nu_m|}}^{(x_m,t_m)}M_m\Big)-(c+m)\langle a -\nu_m/|\nu_m|,\nu_m\rangle\Big].
\end{align*}
Now, \eqref{key ineq} implies that the second term after the infimum is bounded from below as $m\to \infty$. Hence by the definition of the infimum, there exists $(a_m,c_m)\in \mathcal H_m$ such that 
\begin{align}
\begin{split}\label{eq: infimum choice}
\tilde{\Phi}_m &\geq-\tr\Big(\mathcal A_{a_m,-\frac{\nu_m}{|\nu_m|}}^{(x_m,t_m)}M_m\Big)\\
&\hspace{1em}-(c_m+m)\langle a_m-\nu_m/|\nu_m|,\nu_m\rangle -\frac{1}{m}.
\end{split}
\end{align}

Next, we prove that 
\begin{equation}\label{eq: inf limit}
a_m\to\frac{\nu}{|\nu|}
\end{equation}
as $m\to \infty$. To establish this, it suffices to show that for given $\eta>0$, there is $m_0:=m_0(\eta)$ such that
$$
\langle a_m,\nu_m\rangle \geq |\nu_m|-\eta
$$
for all $m\geq m_0$. We assume, on the contrary, that there is $\eta>0$ such that for all $m\geq 0$
$$
\langle a_m,\nu_m\rangle<|\nu_m|-\eta.
$$
Thus in this case, \eqref{eq: infimum choice} implies
\begin{align*}
\tilde{\Phi}_m &\geq  -n\Lambda\big|\big|M_m\big|\big|+\eta(c_m+m)-\frac{1}{m}.
\end{align*}
This contradicts the boundedness of $\tilde{\Phi}_m$ as $m\to \infty$, and hence, \eqref{eq: inf limit} holds.

Recall that the function $p$ is continuous which implies $p(x_m,t_m)\to p(x,t)$ as $m\to \infty$. Therefore by combining the assumptions, \eqref{key ineq} and \eqref{eq: inf limit} with \eqref{eq: infimum choice}, we get
\begin{align*}
\liminf_{m \to \infty}\tilde{\Phi}_m &\geq -\tr\Big(\mathcal A_{\frac{\nu}{|\nu|},-\frac{\nu}{|\nu|}}^{(x,t)}M\Big) \\
&= -\big(p(x,t)-2\big)\frac{\langle M\nu,\nu\rangle}{|\nu|^2}-\tr(M).
\end{align*}
Thus, we have proven
$$
\liminf_{m\to \infty}F_m^-\big((x_m,t_m),\xi_m,\nu_m,M_m\big)\geq -F\big((x,t),\xi,\nu,M\big).
$$

Next, we prove that
\begin{equation}\label{eq: A2}
\limsup_{m\to \infty}F_m^-\big((x_m,t_m),\xi_m,\nu_m,M_m\big)\leq -F\big((x,t),\xi,\nu,M\big).
\end{equation}
Again, as $\big(\nu_m/|\nu_m|,m\big)\in \mathcal H_m$, we have
\begin{align*}
\tilde{\Phi}_m &\leq \sup_{(b,d)\in \mathcal H_m}\Big[-\tr\Big(\mathcal A_{\frac{\nu_m}{|\nu_m|},b}^{(x_m,t_m)}M_m\Big)-(m+d)\langle \nu_m/|\nu_m|+b,\nu_m\rangle\Big].
\end{align*}
Because the second term after the supremum is bounded from above by \eqref{key ineq}, we find $\big(b_m,d_m\big)\in\mathcal H_m$ such that
\begin{align}\label{calc: A1}
\tilde{\Phi}_m \leq &-\tr\Big(\mathcal A_{\frac{\nu_m}{|\nu_m|},b_m}^{(x_m,t_m)}M_m\Big)-(m+d_m)\langle \nu_m/|\nu_m|+b_m,\nu_m\rangle+\frac{1}{m}
\end{align}
by the definition of the supremum. Moreover, $\tilde{\Phi}_m$ is bounded also from below, because we can use \eqref{key ineq} and estimate the supremum in $\tilde{\Phi}_m$ with the choice $(-\nu_m/|\nu_m|,0)\in \mathcal H_m$. This and the estimate \eqref{calc: A1} imply $b_m\to-\nu/|\nu|$ as $m\to \infty$ in a similar way to the above. Therefore, this, together with the estimate \eqref{key ineq} in the inequality \eqref{calc: A1}, by taking $\limsup_{m \to \infty}$, completes the proof of \eqref{eq: A2}. 
\end{proof}
\end{lemma}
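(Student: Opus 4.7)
The terms $\langle \mu, \nu_m\rangle \to \langle \mu, \nu\rangle$ and $r\xi_m \to r\xi$ are immediate from the continuity of the inner product and the hypotheses, so I would reduce matters to showing that the non-trivial inf-sup
$$\tilde{\Phi}_m := \inf_{(a,c)\in \mathcal H_m}\sup_{(b,d)\in \mathcal H_m}\Bigl[-\tr\bigl(\mathcal A_{a,b}^{(x_m,t_m)}M_m\bigr) - (c+d)\langle a+b, \nu_m\rangle\Bigr]$$
converges to $-\tr\bigl(\mathcal A_{\nu/|\nu|,\,-\nu/|\nu|}^{(x,t)}M\bigr) = -(p(x,t)-2)\langle M\nu, \nu\rangle/|\nu|^2 - \tr(M)$, which is precisely what is needed for $F_m^- \to -F$. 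The key structural observation driving the whole plan is the elementary inequality $\langle \nu_m/|\nu_m| + z, \nu_m\rangle \geq 0$ for every $z\in \mathbb S^{n-1}$ (with equality iff $z = -\nu_m/|\nu_m|$), which tells us which $a$ the infimum player must choose to prevent the sup from exploding in $d$, and dually for the supremum player.

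Next I would establish uniform two-sided bounds on $\tilde{\Phi}_m$. For the upper bound I test the infimum at $(a,c) = (\nu_m/|\nu_m|, 0)$: then $\langle a+b,\nu_m\rangle \geq 0$ for every $b\in \mathbb S^{n-1}$, so the supremum player gains nothing by taking $d$ large, and the remaining matrix contribution is bounded by $n\Lambda \|M_m\|$ through the eigenvalue estimate \eqref{iso lambda}. For the lower bound I test the supremum at $(b,d) = (-\nu_m/|\nu_m|, m)$; after extracting a near-infimum $(a_m, c_m)$, this gives
$$\tilde{\Phi}_m \geq -\tr\bigl(\mathcal A_{a_m,-\nu_m/|\nu_m|}^{(x_m,t_m)}M_m\bigr) - (c_m+m)\langle a_m - \nu_m/|\nu_m|, \nu_m\rangle - 1/m,$$
and since $\langle a_m - \nu_m/|\nu_m|, \nu_m\rangle \leq 0$ the drift term is non-negative, giving a lower bound uniform in $m$.

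The crucial step is then to force convergence of the near-optimal directions in spite of the unbounded magnitudes $c_m,d_m$. Combining the two bounds, the non-negative quantity $(c_m+m)(|\nu_m| - \langle a_m, \nu_m\rangle)$ must remain bounded above, and since $c_m+m \to \infty$ one concludes $\langle a_m,\nu_m\rangle \to |\nu|$, hence $a_m \to \nu/|\nu|$. The completely symmetric argument, testing the infimum at $(\nu_m/|\nu_m|, m)$ and extracting a near-maximizer of the supremum, yields $b_m \to -\nu/|\nu|$. With the directional parts pinned down, continuity of $p$ together with $(x_m,t_m,\nu_m,M_m) \to (x,t,\nu,M)$ delivers the limit by direct substitution into $\mathcal A_{\nu/|\nu|,-\nu/|\nu|}^{(x,t)}$, and both the $\liminf$ and the $\limsup$ of $\tilde{\Phi}_m$ agree with $-\tr\bigl(\mathcal A_{\nu/|\nu|,-\nu/|\nu|}^{(x,t)}M\bigr)$. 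The main obstacle is precisely this forcing step: the $(c,d)$-components of the admissible controls are not a priori tight, so ordinary compactness cannot be invoked, and the convergence of the directional components $a_m, b_m$ has to be extracted from the coercivity built into the $m$ in the drift penalty.
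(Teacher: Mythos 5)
Your proposal is correct and follows essentially the same route as the paper: the same key inequality $\langle \nu_m/|\nu_m|+z,\nu_m\rangle\geq 0$, the same test choices $(\nu_m/|\nu_m|,0)$ and $(-\nu_m/|\nu_m|,m)$ for the two-sided bounds, and the same coercivity argument forcing $a_m\to\nu/|\nu|$ and $b_m\to-\nu/|\nu|$ from the boundedness of $(c_m+m)(|\nu_m|-\langle a_m,\nu_m\rangle)$. The paper phrases that forcing step as a contradiction with a fixed $\eta>0$, but the content is identical.
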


For all $M\in S(n)$, we utilize the Pucci operators
$$
P^+(M):=\sup_{A\in \mathcal A_{\lambda,\Lambda}}\tr(AM)
$$
and
$$
P^-(M):=\inf_{A\in \mathcal A_{\lambda,\Lambda}}\tr(AM),
$$
where $\mathcal A_{\lambda,\Lambda}\subset S(n)$ is the set of symmetric $n\times n$ matrices whose eigenvalues belong to $[\lambda,\Lambda]$.

\begin{lemma}\label{lemma: equicontinuity}
Let $u_m$ be the unique solution to \eqref{eq: bdd 2} ensured by Proposition \ref{prop: unique sol}. Then, the function $u_m$ is H\"{o}lder continuous on $\mathbb R^n \times [0,T]$ with a H\"{o}lder constant independent of $m$. In particular, the sequence
$$
\{u_m:m \geq 1\}
$$
is equicontinuous on $\mathbb R^n \times [0,T]$.

\begin{proof}
Let $m\geq 1$ and $(x,t)\in \mathbb R^n \times (0,T)$. Furthermore, let $\varphi \in C^2\big(\mathbb R^n\times (0,T)\big)$ test $u_m$ from below at $(x,t)$. First, we assume $D\varphi(x,t)\not=0$. Because $u_m$ is a supersolution to \eqref{eq: bdd 2}, we can find a vector $b_m$ on a compact set $\mathbb S^{n-1}$ such that 
\begin{align*}
0&\geq \partial_t \varphi(x,t)+\tr\Big(\mathcal A_{\frac{D\varphi(x,t)}{|D\varphi(x,t)|},b_m}^{(x,t)}D^2\varphi(x,t)\Big)+\big\langle \mu,D\varphi(x,t)\big\rangle-r\varphi(x,t) \\
&\geq \partial_t \varphi(x,t)+P^-(D^2\varphi(x,t))+\big\langle \mu,D\varphi(x,t)\big\rangle-r\varphi(x,t).
\end{align*}

Next, we assume $D\varphi(x,t)=0$. Now, since there is no more gradient dependence in $\Phi$, the term inside $\inf \sup$ in $\Phi$ is always bounded, and hence for any $\nu \in \mathbb S^{n-1}$, there is $b_m \in \mathbb S^{n-1}$ such that
\begin{align*}
0&\geq \partial_t \varphi(x,t)+\tr\Big(\mathcal A_{\nu,b_m}^{(x,t)}D^2\varphi(x,t)\Big)+\big\langle \mu,D\varphi(x,t)\big\rangle-r\varphi(x,t) \\
&\geq \partial_t \varphi(x,t)+P^-(D^2\varphi(x,t))+\big\langle \mu,D\varphi(x,t)\big\rangle-r\varphi(x,t).
\end{align*}

Let $\phi \in C^2\big(\mathbb R^n\times (0,T)\big)$ test $u_m$ from above at $(x,t)$. In a similar way to the above, if $D\phi(x,t)\not=0$, we can find $a_m\in \mathbb S^{n-1}$ such that
\begin{align*}
0&\leq \partial_t \phi(x,t)+\tr\Big(\mathcal A_{a_m,-\frac{D\phi(x,t)}{|D\phi(x,t)|}}^{(x,t)}D^2\phi(x,t)\Big)+\big\langle \mu,D\phi(x,t)\big\rangle -r\phi(x,t) \\
&\leq \partial_t \phi(x,t)+P^+\big(D^2\phi(x,t)\big)+\big\langle \mu,D\phi(x,t)\big\rangle-r\phi(x,t),
\end{align*}
because $u_m$ is a subsolution to \eqref{eq: bdd 2}. Furthermore, if $D\phi(x,t)=0$, for any $\nu \in \mathbb S^{n-1}$, there is $a_m \in \mathbb S^{n-1}$ such that
\begin{align*}
0&\leq \partial_t \phi(x,t)+\tr\Big(\mathcal A_{a_m,\nu}^{(x,t)}D^2\phi(x,t)\Big)+\big\langle \mu,D\phi(x,t)\big\rangle-r\phi(x,t) \\
&\leq \partial_t \phi(x,t)+P^+(D^2\phi(x,t))+\big\langle \mu,D\phi(x,t)\big\rangle-r\phi(x,t).
\end{align*}
Thus, we have shown that $u_m$ is a super- and a subsolution to the equations
\begin{align*}
\begin{cases}
\partial_t u_m(x,t)+P^-\big(D^2u_m(x,t)\big)+\big\langle \mu,Du_m(x,t)\big\rangle-ru_m(x,t)=0, \\
\partial_t u_m(x,t)+P^+\big(D^2u_m(x,t)\big)+\big\langle \mu,Du_m(x,t)\big\rangle-ru_m(x,t)=0, 
\end{cases}
\end{align*}
respectively. Therefore, the classical result of \cite[Theorem 4.19]{wang92}, see also \cite{krylovs80}, implies that the function $u_m$ is H\"{o}lder continuous with a H\"{o}lder constant independent of $m$. 
\end{proof}
\end{lemma}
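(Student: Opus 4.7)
The plan is to sandwich the Bellman--Isaacs operator in \eqref{eq: bdd 2} between two Pucci extremal operators whose ellipticity constants are independent of $m$, and then invoke the classical H\"older regularity theory of Krylov--Safonov / Wang for uniformly parabolic fully nonlinear equations.

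First I would derive the Pucci bounds. Let $\varphi \in C^{2,1}$ touch $u_m$ from below at $(x,t)$ with $D\varphi(x,t)\neq 0$. Using the supersolution inequality $\partial_t\varphi \le F_m^-$ and plugging $(a,c)=(D\varphi/|D\varphi|,0)$ into the infimum, the drift contribution
\[
-(c+d)\langle a+b,D\varphi\rangle \;=\; -d\bigl(|D\varphi|+\langle b,D\varphi\rangle\bigr)
\]
is nonpositive, so the supremum over $(b,d)\in\mathcal H_m$ is attained at $d=0$ and at some $b_m\in\mathbb S^{n-1}$; crucially, no dependence on the bound $m$ survives. Thus
\[
0 \;\ge\; \partial_t\varphi + \tr\!\bigl(\mathcal A_{D\varphi/|D\varphi|,\,b_m}^{(x,t)}D^2\varphi\bigr) + \langle \mu,D\varphi\rangle - r\varphi.
\]
Since every $\mathcal A_{a,b}^{(x,t)}$ has spectrum in $[\lambda,\Lambda]$ by \eqref{iso lambda}, this matrix lies in $\mathcal A_{\lambda,\Lambda}$ and the trace is bounded below by $P^-(D^2\varphi)$. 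When $D\varphi(x,t)=0$, the drift part of $\Phi$ vanishes identically in $(c,d)$, so fixing an arbitrary $\nu\in\mathbb S^{n-1}$ in the infimum and extracting $b_m$ from compactness produces the same Pucci bound. The symmetric argument, testing $u_m$ from above and using $(b,d)=(-a,0)$ inside the supremum to neutralize the drift, shows that $u_m$ is also, uniformly in $m$, a viscosity subsolution of
\[
\partial_t w + P^+(D^2 w) + \langle\mu,Dw\rangle - rw = 0.
\]

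With these uniform Pucci bounds in hand, the proof concludes by invoking H\"older regularity for uniformly parabolic fully nonlinear equations: \cite[Theorem 4.19]{wang92}, resting on \cite{krylovs80}, provides an exponent $\alpha\in(0,1)$ and constants depending only on $n,\lambda,\Lambda,|\mu|,r,T$ such that any bounded solution of these Pucci flanking equations is $C^{\alpha,\alpha/2}$ with a uniform estimate. A uniform $L^\infty$ bound for $u_m$ comes from Lemma \ref{uniqueness} by comparison with the constant $\sup g \le L_g$, and the modulus of continuity up to the terminal time $t=T$ is controlled by the Lipschitz barriers $\ol a,\underline a$ built in Lemma \ref{existence}, whose constants depend only on $g$. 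Combining these yields an $m$-independent modulus of continuity for $u_m$ on $\mathbb R^n\times[0,T]$, hence equicontinuity of $\{u_m : m\ge 1\}$.

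The main technical obstacle is handling the degenerate case $D\varphi(x,t)=0$ in the derivation of the Pucci bounds, since the gradient-alignment trick $a=D\varphi/|D\varphi|$ is unavailable. The remedy is the observation that at such a point the drift term in $\Phi$ is identically zero on $\mathcal H_m\times\mathcal H_m$, so the inf--sup reduces to one over the trace term, which is uniformly bounded on $\mathbb S^{n-1}\times\mathbb S^{n-1}$ and handled by the same spectral bound on $\mathcal A_{a,b}^{(x,t)}$. A minor secondary issue is that Wang's estimate is stated on bounded cylinders, so one applies it on an exhausting family of compact subsets of $\mathbb R^n\times[0,T]$ and uses the scale invariance of the Pucci operators together with the uniform $L^\infty$ bound to patch the local estimates into a global modulus.
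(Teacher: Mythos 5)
Your proposal is correct and follows essentially the same route as the paper: freeze $a=D\varphi/|D\varphi|$ (resp.\ a gradient-neutralizing $b$) to kill the unbounded drift term, handle the degenerate case $D\varphi=0$ by noting the drift vanishes identically, sandwich the operator between Pucci extremal operators with $m$-independent ellipticity constants $\lambda,\Lambda$, and invoke Wang/Krylov--Safonov. The only (immaterial) difference is your choice $b=-a$ in the subsolution case where the paper takes $b=-D\phi/|D\phi|$, and your added remarks on localization and the terminal-time barrier, which the paper leaves implicit.
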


We are now in a position to prove the main theorem of the paper.
\begin{proof}[Proof of Theorem \ref{main theorem of the paper}]By the comparison principle Lemma \ref{uniqueness} and \eqref{g assumptions}, we see that the sequence $(u_m)$ of solutions to \eqref{eq: bdd 2} is uniformly bounded in $m$. Hence, because Lemma \ref{lemma: equicontinuity} holds, by the Arzel\`{a}-Ascoli theorem, there exist $u$, continuous on $\mathbb R^n \times [0,T]$, and a subsequence $(m_j)$ such that it holds
$$
u_{m_j} \to u
$$
uniformly on $\mathbb R^n\times[0,T]$ as $j\to \infty$. By Lemmas \ref{test f reduction} and \ref{lemma: limit eq}, the stability principle for viscosity solutions yield that $u$ is a viscosity solution to \eqref{eq: bdd 3}. Therefore by Lemma \ref{lemma: game value}, the final part is to show that the value function $U_m^-$ with uniformly bounded controls converges to the value function $U^-$ as $m \to \infty$. This follows from the properties of the infimum and the supremum, because the boundary values $g$ are bounded, and for the set of admissible strategies, it holds $\mathcal S=\bigcup_{m}\mathcal S_m$. For more details, see for example \cite[the proof of Theorem 1.2]{nystromp17}.  

The corresponding proofs in the context of $U^+$, $U_m^+$ and the equation \eqref{eq: bdd 1} are analogous to the above. In particular, let $u_m^+$ be the unique viscosity solution to \eqref{eq: bdd 1}. The proof of Lemma \ref{use ishii app} for $u_m^+$ and $F_m^+$ is essentially the same as before. Then by minor adjustments to the proofs of Lemmas \ref{lemma: game value} and \ref{main lemma add reg}, we can show that $u_m^+=U_m^+$ on $\mathbb R^n \times [0,T]$. Finally, the uniform boundedness and the equicontinuity of the family $(u_m^+)$, together with the convergence of $U_m^+$ to $U^+$ as $m\to \infty$, follows as before. Therefore, the proof is complete.

\end{proof}

\def\cprime{$'$} \def\cprime{$'$}


\begin{thebibliography}{PSSW09}

\bibitem[AB10]{atarb10}
R.~Atar and A.~Budhiraja.
\newblock A stochastic differential game for the inhomogeneous $\infty$-laplace
  equation.
\newblock {\em Ann. Probab.}, 38(2):498--531, 2010.

\bibitem[BL08]{buckdahnl08}
R.~Buckdahn and J.~Li.
\newblock Stochastic differential games and viscosity solutions of
  {H}amilton-{J}acobi-{B}ellman-{I}saacs equations.
\newblock {\em SIAM J. Control Optim.}, 47(1):444--475, 2008.

\bibitem[CGG91]{chengg91}
Y.~G. Chen, Y.~Giga, and S.~Goto.
\newblock Uniqueness and existence of viscosity solutions of generalized mean
  curvature flow equations.
\newblock {\em J. Differential Geom.}, 33(3):749--786, 1991.

\bibitem[CI90]{crandalli90a}
M.~G. Crandall and H.~Ishii.
\newblock The maximum principle for semicontinuous functions.
\newblock {\em Differential Integral Equations}, 3(6):1001--1014, 1990.

\bibitem[CIL92]{grandallil92}
M.~G. Crandall, H.~Ishii, and P.-L. Lions.
\newblock User's guide to viscosity solutions of second order partial
  differential equations.
\newblock {\em Bull. Amer. Math. Soc. (N.S.)}, 27(1):1--67, 1992.

\bibitem[EG92]{evansg92}
L.~C. Evans and R.~F. Gariepy.
\newblock {\em Measure theory and fine properties of functions}.
\newblock Studies in Advanced Mathematics. CRC Press, Boca Raton, FL, 1992.

\bibitem[Eva13]{evans13}
L.~C. Evans.
\newblock {\em An introduction to stochastic differential equations}.
\newblock American Mathematical Society, Providence, RI, 2013.

\bibitem[GGIS91]{gigagis91}
Y.~Giga, S.~Goto, H.~Ishii, and M.-H. Sato.
\newblock Comparison principle and convexity preserving properties for singular
  degenerate parabolic equations on unbounded domains.
\newblock {\em Indiana Univ. Math. J.}, 40(2):443--470, 1991.

\bibitem[Ish89]{ishii89}
H.~Ishii.
\newblock On uniqueness and existence of viscosity solutions of fully nonlinear
  second-order elliptic {PDE}s.
\newblock {\em Comm. Pure Appl. Math.}, 42(1):15--45, 1989.

\bibitem[Ish95]{ishii95}
H.~Ishii.
\newblock On the equivalence of two notions of weak solutions, viscosity
  solutions and distribution solutions.
\newblock {\em Funkcial. Ekvac.}, 38(1):101--120, 1995.

\bibitem[Jen88]{jensen88}
R.~Jensen.
\newblock The maximum principle for viscosity solutions of fully nonlinear
  second order partial differential equations.
\newblock {\em Arch. Rational Mech. Anal.}, 101(1):1--27, 1988.

\bibitem[JLM01]{juutinenlm01}
P.~Juutinen, P.~Lindqvist, and J.~J. Manfredi.
\newblock On the equivalence of viscosity solutions and weak solutions for a
  quasi-linear equation.
\newblock {\em SIAM J. Math. Anal.}, 33(3):699--717, 2001.

\bibitem[Kat15]{katzourakis15}
N.~Katzourakis.
\newblock {\em An introduction to viscosity solutions for fully nonlinear {PDE}
  with applications to calculus of variations in {$L^\infty$}}.
\newblock SpringerBriefs in Mathematics. Springer, Cham, 2015.

\bibitem[KMP12]{kawohlmp12}
B.~Kawohl, J.~J. Manfredi, and M.~Parviainen.
\newblock Solutions of nonlinear {PDE}s in the sense of averages.
\newblock {\em J. Math. Pures Appl.}, 97(2):173--188, 2012.

\bibitem[Kry09]{krylov80}
N.~V. Krylov.
\newblock {\em Controlled diffusion processes}. Volume~14 of Stochastic
  Modeling and Applied Probability.
\newblock Springer-Verlag, Berlin, 2009.

\bibitem[KS79]{krylovs79}
N.~V. Krylov and M.~V. Safonov.
\newblock An estimate for the probability of a diffusion process hitting a set
  of positive measure.
\newblock {\em Dokl. Akad. Nauk SSSR}, 245(1):18--20, 1979.

\bibitem[KS80]{krylovs80}
N.~V. Krylov and M.~V. Safonov.
\newblock A property of the solutions of parabolic equations with measurable coefficients.
\newblock {\em Izv. Akad. Nauk SSSR Ser. Mat.}, 44(1):161--175, 239, 1980.

\bibitem[MPR10]{manfredipr10c}
J.J. Manfredi, M.~Parviainen, and J.D. Rossi.
\newblock An asymptotic mean value characterization for a class of nonlinear
  parabolic equations related to tug-of-war games.
\newblock {\em SIAM J.\ Math.\ Anal.}, 42(5):2058--2081, 2010.

\bibitem[MPR12]{manfredipr12}
J.J. Manfredi, M.~Parviainen, and J.D. Rossi.
\newblock On the definition and properties of p-harmonious functions.
\newblock {\em Ann. Scuola Norm. Sup. Pisa Cl. Sci.}, 11(2):215--241, 2012.

\bibitem[NP17]{nystromp17}
K.~Nystr\"{o}m and M.~Parviainen.
\newblock Tug-of-war, market manipulation, and option pricing.
\newblock {\em Mathematical Finance}, 27(2):279--312, 2017.

\bibitem[PS08]{peress08}
Y.~Peres and S.~Sheffield.
\newblock Tug-of-war with noise: a game-theoretic view of the
  {$p$}-{L}aplacian.
\newblock {\em Duke Math. J.}, 145(1):91--120, 2008.

\bibitem[PSSW09]{peresssw09}
Y.~Peres, O.~Schramm, S.~Sheffield, and D.~B. Wilson.
\newblock Tug-of-war and the infinity {L}aplacian.
\newblock {\em J. Amer. Math. Soc.}, 22(1):167--210, 2009.

\bibitem[Swi96]{swiech96}
A.~Swiech.
\newblock Another approach to the existence of value functions of stochastic
  differential games.
\newblock {\em J. Math. Anal. Appl.}, 204(3):884--897, 1996.

\bibitem[Wan92]{wang92}
L.~Wang.
\newblock On the regularity theory of fully nonlinear parabolic equations. {I}.
\newblock {\em Comm. Pure Appl. Math.}, 45(1):27--76, 1992.

\end{thebibliography}
\end{document}